%
%
%
%
\documentclass[11pt,reqno]{amsart}

\usepackage{amssymb, amsmath, amsthm}
\usepackage[backref]{hyperref}
\usepackage[alphabetic,backrefs,lite]{amsrefs}
\usepackage{verbatim}
\usepackage{amscd}   
\usepackage[all]{xy} 
\usepackage{youngtab} 
\usepackage{young} 
\usepackage{tikz}

\setlength{\textwidth}{6.2in}
\setlength{\oddsidemargin}{0.3in}
\setlength{\evensidemargin}{0.3in}

\newcommand{\defi}[1]{{\upshape\sffamily #1}}

\renewcommand{\1}{{\bf 1}}
\renewcommand{\a}{\alpha}

\newcommand{\A}{\mc{A}}
\renewcommand{\b}{\beta}
\newcommand{\bt}{\boxtimes}

\newcommand{\C}{\mathcal{C}}
\renewcommand{\d}{\underline{d}}

\newcommand{\FI}{\mathrm{FI}}
\renewcommand{\H}{\tilde{H}}

\newcommand{\K}{\bb{K}}
\renewcommand{\ll}{\lambda}
\newcommand{\m}{\mathfrak{m}}

\newcommand{\N}{\underline{N}}
\renewcommand{\o}{\otimes}
\newcommand{\oo}{\otimes}

\newcommand{\s}{\sigma}
\renewcommand{\S}{\mathfrak{S}}

\newcommand{\GL}{{GL}}
\newcommand{\Sym}{\operatorname{Sym}} 

\newcommand{\bb}[1]{\mathbb{#1}}
\renewcommand{\rm}[1]{\mathrm{#1}}
\newcommand{\mc}[1]{\mathcal{#1}}
\newcommand{\ul}[1]{\underline{#1}}

\newcommand{\ccirc}[1]{\xymatrix@1{*+<1ex>[o][F-]{#1}}}

\def\mone{-1}
\def\mtwo{-2}
\def\mthree{-3}

\def\lra{\longrightarrow}

\newtheorem{theorem}{Theorem}[section]
\newtheorem*{theorem*}{Theorem}
\newtheorem{lemma}[theorem]{Lemma}

\newtheorem{proposition}[theorem]{Proposition}
\newtheorem{corollary}[theorem]{Corollary}
\newtheorem*{corollary*}{Corollary}

\newtheorem*{corSTAB}{Corollary \ref{cor:stability}}
\newtheorem*{corNp}{Corollary \ref{cor:Np}}
\newtheorem*{corsyzstability}{Theorem \ref{cor:syzstability}}

\newtheorem*{tlinearstrand}{Theorem~\ref{thm:linearstrand}}

\theoremstyle{definition}
\newtheorem{definition}[theorem]{Definition}
\newtheorem*{definition*}{Definition}
\newtheorem{example}[theorem]{Example}

\theoremstyle{remark}
\newtheorem{remark}[theorem]{Remark}
\newtheorem*{remark*}{Remark}

\numberwithin{equation}{section}



\begin{document}

\title[Representation stability for syzygies]{Representation stability for syzygies of line bundles on Segre--Veronese varieties}

\author{Claudiu Raicu}
\address{Department of Mathematics, Princeton University, Princeton, NJ 08544-1000\newline
\indent Institute of Mathematics ``Simion Stoilow'' of the Romanian Academy}
\email{craicu@math.princeton.edu}

\subjclass[2010]{Primary 13D02, 14M12, 05E10, 55U10}

\date{\today}

\keywords{Syzygies, representation stability, Segre varieties, Veronese varieties, chessboard complexes, matching complexes, packing complexes, asymptotic vanishing}

\begin{abstract} The rational homology groups of the packing complexes are important in algebraic geometry since they control the syzygies of line bundles on projective embeddings of products of projective spaces (Segre--Veronese varieties). These complexes are a common generalization of the multidimensional chessboard complexes and of the matching complexes of complete uniform hypergraphs, whose study has been a topic of interest in combinatorial topology. We prove that the multivariate version of representation stability, a notion recently introduced and studied by Church and Farb, holds for the homology groups of packing complexes. This allows us to deduce stability properties for the syzygies of line bundles on Segre--Veronese varieties. We provide bounds for when stabilization occurs and show that these bounds are sometimes sharp by describing the linear syzygies for a family of line bundles on Segre varieties. 

As a motivation for our investigation, we show in an appendix that Ein and Lazarsfeld's conjecture on the asymptotic vanishing of syzygies of coherent sheaves on arbitrary projective varieties reduces to the case of line bundles on a product of (at most three) projective spaces.
\end{abstract}

\maketitle

\section{Introduction}

In this paper we prove that the rational homology groups of \defi{packing complexes} satisfy representation stability in the sense of Church and Farb, and we derive as a consequence a stabilization phenomenon for the syzygies of line bundles on Segre--Veronese varieties. Of particular interest is the case of ``stabilization to zero'', i.e. when the rational homology groups, respectively the syzygy modules, become trivial. The reason for this is explained in the appendix where we show that the conjecture of Ein and Lazarsfeld on the asymptotic vanishing of syzygies of sufficiently positive embeddings of a projective variety reduces to a vanishing statement for syzygies of line bundles on a product of (at most three) projective spaces.

We begin by formulating a theorem that illustrates the kind of syzygy stabilization results that we are aiming for. We first introduce some notation: when $X\subset\bb{P}W$ is a projective variety, embedded by the complete linear series corresponding to some line bundle $\mc{L}$, we associate to any sheaf $\mc{B}$ on $X$ the \defi{Koszul cohomology group $K_{p,q}(X,\mc{B};\mc{L})$} (Section \ref{subsec:syzfun}). If we let $B=\bigoplus_{n\in\bb{Z}}H^0(X,\mc{B}\oo\mc{L}^{\oo n})$ and $S=\Sym(W)$ then $K_{p,q}(X,\mc{B};\mc{L})$ is the space of minimal $p$--syzygies of degree $(p+q)$ of the $S$--module $B$.

\begin{tlinearstrand} For $n\geq 2$, we let $X=\bb{P}V_1\times\cdots\times\bb{P}V_n$, where $V_i$ are vector spaces over a field $\K$ of characteristic zero, and consider the line bundles $\mc{L}=\mc{O}(1,1,\cdots,1)$ and $\mc{B}_a=\mc{O}(a,0,\cdots,0)$ on $X$. For $p\geq 0$ and $\ll=(\ll^1,\cdots,\ll^n)$ a collection of partitions of $p$ we let $m_{\ll}$ denote the multiplicity of $S_{\ll^1}V_1\oo\cdots\oo S_{\ll^n}V_n$ inside $\bigwedge^p(V_1\oo\cdots\oo V_n)$. We have the decomposition into irreducible $\GL(V_1)\times\cdots\times\GL(V_n)$--representations
\[K_{p,0}(X,\mc{B}_a;\mc{L})=\bigoplus_{\ll} (S_{\ll^1[p+a]}V_1\oo S_{\ll^2}V_2\oo\cdots\oo S_{\ll^n}V_n)^{\oplus m_{\ll}},\]
where given a partition $\delta=(\delta_1,\delta_2,\cdots)$ of some integer $r$ we write $\delta[m]$ for the partition $(m-r,\delta_1,\delta_2,\cdots)$. $S_{\delta}$ denotes the Schur functor associated to $\delta$, and we make the convention that $S_{\delta[m]}$ is identically zero when $m-r<\delta_1$.
\end{tlinearstrand}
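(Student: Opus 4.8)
The plan is to read off $K_{p,0}(X,\mc{B}_a;\mc{L})$ directly from the Koszul complex of the graded $S$--module $B=\bigoplus_{m}H^0(X,\mc{B}_a\otimes\mc{L}^m)$. With $W=H^0(X,\mc{L})=V_1\otimes\cdots\otimes V_n$ we have $B_m=\Sym^{a+m}V_1\otimes\Sym^m V_2\otimes\cdots\otimes\Sym^m V_n$ for $m\ge0$, while $B_m=0$ for $m<0$ because $n\ge2$ forces some factor of $\mc{B}_a\otimes\mc{L}^m$ to be $\mc{O}_{\bb{P}V_i}(-1)$. Hence the term $\bigwedge^{p+1}W\otimes B_{-1}$ of the Koszul complex vanishes, so $K_{p,0}(X,\mc{B}_a;\mc{L})$ is simply the kernel of the first Koszul differential
\[
d_p\colon\quad \bigwedge^{p}W\otimes\Sym^a V_1 \ \lra\ \bigwedge^{p-1}W\otimes\Sym^{a+1}V_1\otimes V_2\otimes\cdots\otimes V_n ,
\]
or equivalently $K_{p,0}(X,\mc{B}_a;\mc{L})=H^0(X,\bigwedge^{p}M_{\mc{L}}\otimes\mc{B}_a)$ for the kernel bundle $M_{\mc{L}}=\ker(W\otimes\mc{O}_X\to\mc{L})$ --- the latter being the form in which these groups get expressed through packing complexes.

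The map $d_p$ is $\GL(V_1)\times\cdots\times\GL(V_n)$--equivariant, so it suffices to compute, for each $\b\vdash p+a$ and each tuple $(\ll^2,\ldots,\ll^n)$ of partitions of $p$, the multiplicity of $S_\b V_1\otimes S_{\ll^2}V_2\otimes\cdots\otimes S_{\ll^n}V_n$ in $\ker d_p$. One decomposes source and target into irreducibles: the Cauchy formula for $\bigwedge^{p}(V_1\otimes\cdots\otimes V_n)$ brings in precisely the multiplicities $m_{\ll}$, and Pieri's rule then absorbs the extra $\Sym^a V_1$ in the source, and the $\Sym^{a+1}V_1$ and each factor $V_i$ in the target. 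It is convenient to factor $d_p=(\mathrm{id}\otimes\mu)\circ(\Delta\otimes\mathrm{id})$, where $\Delta\colon\bigwedge^{p}W\hookrightarrow\bigwedge^{p-1}W\otimes W$ is the (injective) comultiplication and $\mu\colon W\otimes\Sym^a V_1\twoheadrightarrow\Sym^{a+1}V_1\otimes V_2\otimes\cdots\otimes V_n$ is multiplication in the $V_1$--slot, with $\ker\mu=S_{(a,1)}V_1\otimes V_2\otimes\cdots\otimes V_n$ by Pieri. This settles $p=1$ already: the only tuple is $\ll=((1),\ldots,(1))$, $m_{\ll}=1$, and $K_{1,0}=\ker\mu=S_{(1)[1+a]}V_1\otimes V_2\otimes\cdots\otimes V_n$, as claimed.

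For $p\ge2$ the heart of the matter is to show that $d_p$ is as injective as it can be: that the multiplicity of $S_\b V_1\otimes S_{\ll^2}V_2\otimes\cdots\otimes S_{\ll^n}V_n$ in $\ker d_p$ equals $m_{((\b_2,\b_3,\ldots),\,\ll^2,\ldots,\ll^n)}$ when $\b_1=a$ and is $0$ otherwise. (When $a$ is small there are many components in the source, namely those $S_\b V_1\otimes\cdots$ with $\b_1>a$, that contribute nothing to the kernel; this extra vanishing is exactly what the convention ``$S_{\ll^1[p+a]}=0$ when $a<\ll^1_1$'' records.) By the identification of each $\GL$--isotypic component of $K_{p,q}(X,\mc{B}_a;\mc{L})$ with the reduced rational homology of a packing complex, specialized to the linear strand $q=0$, the complexes that occur are of a very restricted shape; their reduced homology is readily determined (it is concentrated in a single degree), and comparing the outcome with the Cauchy decomposition of $\bigwedge^{p}(V_1\otimes\cdots\otimes V_n)$ returns precisely the multiplicities $m_{\ll}$ in the statement. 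The main obstacle is thus this homology computation --- equivalently, the maximal--rank assertion for $d_p$; the remaining $\GL$--bookkeeping through Cauchy and Pieri, and the reduction to it, are routine.
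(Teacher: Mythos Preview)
Your reduction is correct: since $B_{-1}=0$ when $n\ge 2$, the group $K_{p,0}(\mc{B}_a)$ is the kernel of the Koszul map $d_p$, and via the correspondence with packing complexes this is the top reduced homology $\H_{p-1}\big(\C^{(1,\ldots,1)}_{(p+a,p,\ldots,p)}\big)$. You also correctly isolate the crux of the problem as the ``maximal--rank'' assertion for $d_p$, equivalently the determination of that top homology as a $\S_{p+a}\times\S_p^{\,n-1}$--representation.

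The gap is that you do not carry out this computation. Saying the homology is ``readily determined'' and ``concentrated in a single degree'' is precisely the content of the theorem, not an argument for it; nothing in your writeup explains why the multiplicity of $S_\beta V_1\otimes S_{\ll^2}V_2\otimes\cdots$ in $\ker d_p$ vanishes when $\beta_1>a$ and equals $m_{((\beta_2,\beta_3,\ldots),\ll^2,\ldots,\ll^n)}$ when $\beta_1=a$. A Cauchy/Pieri decomposition of source and target tells you which irreducibles could occur in the kernel but gives no control over the rank of $d_p$ on each isotypic component, and your factoring $d_p=(\mathrm{id}\otimes\mu)\circ(\Delta\otimes\mathrm{id})$ does not help beyond $p=1$ because $\Delta$ is injective while $\mu$ has a large kernel, so the composite can drop rank in ways you have not analyzed. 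The paper fills exactly this gap using combinatorial Laplacians \`a la Friedman--Hanlon: one introduces the transpose $\partial^*$, shows $\ker\Delta=\ker\partial$ for $\Delta=\partial^*\partial$, lifts to the ``unordered'' space $C_p$ carrying commuting $\S_{\ul{A}}$-- and $\S_{\ul{B}}$--actions, and proves that the operator inducing $\Delta$ acts on each bi--isotypic piece $C(\ll,\mu)$ by the scalar $C_{\ll^1}-C_{\mu^1}+p-\binom{a}{2}$ (contents of partitions). An elementary inequality then shows this scalar is nonnegative and vanishes exactly when $\ll^1=\mu^1[p+a]$; Schur--Weyl duality converts the surviving pieces into the asserted multiplicities $m_\ll$. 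That eigenvalue computation is the substantive step your proposal is missing.
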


Note that the conclusion of the theorem remains true in the case $n=1$ if we replace $K_{p,0}(\mc{B}_a)$ with the $p$--th syzygy module of $\m^a$, where $\m$ is the homogeneous maximal ideal in the polynomial ring $S=\Sym(V)$: it is well--known (see \cite[Cor.~3.2]{buch-eis} or \cite[(1.a.10)]{green2}) that the minimal free resolution of $\m^a$ is given by
\[0\leftarrow\m^a\leftarrow S_a V\oo S(-a)\leftarrow S_{a,1}V\oo S(-a-1)\leftarrow S_{a,1^2}V\oo S(-a-2)\leftarrow\cdots\]
Theorem \ref{thm:linearstrand} was known in the case $n=2$ where in fact all the modules $K_{p,q}(\mc{B}_a)$ can be described explicitly (see \cites{friedman-hanlon,rei-rob} or \cite[Chapter~6]{weyman} for a more general story). We will prove Theorem \ref{thm:linearstrand} by applying the techniques of \cite{friedman-hanlon} involving combinatorial Laplacians.

The description of syzygies in Theorem \ref{thm:linearstrand} is fairly explicit, the only mystery being the calculation of the multiplicities $m_{\ll}$. This is known to be a complicated plethysm problem, and our theorem is meant to illustrate that the problem of computing syzygies even for simple modules supported on a product of projective spaces is in some sense equally difficult. An asymptotic measure of the complexity of the syzygies in the linear and quadratic strands ($K_{p,0}$ and $K_{p,1}$) for the Veronese varieties has been obtained by Fulger and Zhou \cite{fulger-zhou} by analyzing the number of distinct irreducible representations appearing in these syzygy modules, as well as the sum of their multiplicities. In Theorem \ref{thm:linearstrandVero} we provide a concrete illustration of their theory by describing the linear syzygies of $\mc{O}(1)$ under a Veronese embedding.

We view Theorem \ref{thm:linearstrand} as a stabilization result in the following way, which we'll be able to generalize further: for $a$ large enough ($a\geq p$) the number of irreducible representations (counted with multiplicities) appearing in the decomposition of $K_{p,0}(\mc{B}_a)$ stabilizes, and furthermore, there is a simple recipe to get the decomposition of $K_{p,0}(\mc{B}_{a+1})$ from that of $K_{p,0}(\mc{B}_a)$. We prove a similar statement for the syzygies of line bundles $\mc{B}_{\ul{b}}=\mc{O}(b_1,\cdots,b_n)$ on a product of projective spaces $X=\bb{P}V_1\times\cdots\times\bb{P}V_n$ with respect to an ample line bundle $\mc{L}=\mc{O}(d_1,\cdots,d_n)$:

\begin{corsyzstability} Consider $r<n$, a sequence $\d=(d_1,\cdots,d_n)$ of positive integers, and fix nonnegative integers $p,q$ and $b_{r+1},\cdots,b_n$ such that the inequality $b_j<d_j$ holds for at least one value of $j\in\{r+1,\cdots,n\}$. For integers $b_1,\cdots,b_r$ we let $N_i=(p+q)d_i+b_i$. There exist a finite number of $n$--tuples of partitions $\ll$ and corresponding multiplicities $m_{\ll}$ such that the decomposition
\[K_{p,q}(X,\mc{B}_{\ul{b}};\mc{L})=\bigoplus_{\ll} \left(S_{\ll^1[N_1]}V_1\oo\cdots\oo S_{\ll^r[N_r]}V_r\oo S_{\ll^{r+1}}V_{r+1}\oo\cdots\oo S_{\ll^n}V_n\right)^{\oplus m_{\ll}}\]
holds independently of $b_1,\cdots,b_r$ as long as $b_i\geq (p+q)d_i$ for $i=1,\cdots,r$.
\end{corsyzstability}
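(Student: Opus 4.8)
The plan is to deduce the statement from a stabilization result for the rational homology of packing complexes, which (as announced in the introduction) is the multivariate representation stability of Church–Farb applied to these complexes. First I would recall the standard identification of Koszul cohomology groups $K_{p,q}(X,\mathcal{B}_{\ul b};\mathcal{L})$ of line bundles on a product of projective spaces with the reduced rational homology of an appropriate packing complex: the generating positivity data $\d=(d_1,\dots,d_n)$ together with the ``internal degree'' $(p+q)$ determines a multipartite ground set (with $(p+q)d_i+b_i = N_i$ points in the $i$-th part, using $N_i = (p+q)d_i + b_i$ exactly as in the statement), and $K_{p,q}$ decomposes, as a representation of $\prod_i \GL(V_i)$, as a sum of Schur functors $S_{\mu^1}V_1\o\cdots\o S_{\mu^n}V_n$ whose multiplicities are the multiplicities of certain $\prod_i \S_{N_i}$-irreducibles in a single homology group of the packing complex. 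The point is that once this dictionary is in place, varying $b_i$ (for $i\le r$) while keeping $b_{r+1},\dots,b_n$ fixed amounts to varying the sizes $N_1,\dots,N_r$ of $r$ of the parts of the ground set while leaving the remaining parts fixed.

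Next I would invoke the multivariate representation stability theorem for packing-complex homology: for the $r$ ``growing'' coordinates, the $\prod_{i\le r}\S_{N_i}$-representation structure on the relevant homology group is eventually (for $N_i$ large) ``induced'' from a fixed finite list of irreducibles of the symmetric groups on a bounded number of points, in the precise sense that the partitions indexing the constituents are obtained by padding a fixed finite family $\{\nu\}=\{(\nu^1,\dots,\nu^r)\}$ with a long first row, with multiplicities independent of $N_i$ once $N_i$ is large enough. Translating back through the Schur–Weyl dictionary, padding the symmetric-group partition $\nu^i$ of size $r_i$ up to a partition of $N_i$ by prepending a first row of length $N_i - r_i$ corresponds exactly to the operation $\ll^i \mapsto \ll^i[N_i]$ in the statement (with $\ll^i = \nu^i$); the partitions $\ll^{r+1},\dots,\ll^n$ attached to the fixed coordinates are unchanged, and the multiplicities $m_\ll$ are the stable multiplicities from the representation-stability statement. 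This yields the claimed decomposition, valid whenever each $N_i$ — equivalently each $b_i$, since $N_i = (p+q)d_i + b_i$ — is at least the stable range bound; the hypothesis $b_i \ge (p+q)d_i$ is the explicit form of that bound coming from the combinatorics of packing complexes.

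The remaining hypothesis to account for is that $b_j < d_j$ for some $j\in\{r+1,\dots,n\}$. This is the condition that guarantees the Koszul cohomology is controlled by the packing complex in the expected way (rather than, say, vanishing trivially or requiring a different combinatorial model for the fixed coordinates); I would cite the relevant setup from the body of the paper, where this inequality ensures that the $j$-th factor genuinely contributes a nontrivial, ``subcritical'' packing constraint so that the homological computation reduces to the single packing complex described above. The main obstacle, and the step that carries the real content, is establishing the representation stability of packing-complex homology in the multipartite setting — i.e. showing that the $\prod_i\FI$-module (or its analogue) formed by these homology groups is finitely generated, with an effective bound on the generation degree giving the stable range $b_i\ge (p+q)d_i$; everything here is the formal consequence of that input together with Schur–Weyl duality, so the proof of the corollary proper is essentially bookkeeping once the main theorems on packing complexes are in hand.
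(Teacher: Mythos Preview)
Your overall strategy is correct and coincides with the paper's: translate $K_{p,q}$ to the reduced homology of a packing complex via Theorem~\ref{thm:main}, then invoke the representation-stability result for that homology (Corollary~\ref{cor:stability}). The one genuine misstep is your explanation of the hypothesis $b_j<d_j$ for some $j\in\{r+1,\dots,n\}$. That condition is \emph{not} needed for the dictionary between Koszul cohomology and packing-complex homology to hold---Theorem~\ref{thm:main} applies unconditionally, so there is no ``different combinatorial model'' to worry about. Its actual role is to pin down the explicit stable range: with $N_j=(p+q)d_j+b_j$ and $0\le b_j<d_j$ one has $\lfloor N_j/d_j\rfloor=p+q$, so the quantity $m=\min_{j>r}\lfloor N_j/d_j\rfloor$ in Corollary~\ref{cor:stability} equals exactly $p+q$; the stable range $N_i\ge 2m\,d_i$ then reads $N_i\ge 2(p+q)d_i$, i.e.\ $b_i\ge(p+q)d_i$. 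In other words, the two hypotheses you treated as independent---the stable-range bound on $b_i$ and the inequality $b_j<d_j$---are in fact linked: the latter is what makes the former the correct bound. Replace your third paragraph with this short computation and the argument is complete.
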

\noindent The condition of the existence of an index $j>r$ such that $b_j<d_j$ in the above result is not restrictive since $K_{p,q}(\mc{B}_{\ul{b}})=K_{p,q+1}(\mc{B}_{\ul{b}}\oo\mc{L}^{-1})=K_{p,q+1}(\mc{B}_{\ul{b}-\d})$. Letting $d_1=\cdots=d_n=1$ and $r=1$ in the above corollary yields the situation of Theorem \ref{thm:linearstrand} where the inequality $b_i\geq(p+q)d_i$ is in fact sharp ($i=1$, $b_1=a$, $d_1=1$, $q=0$, so the inequality becomes $a\geq p$). Unfortunately, we were not able to give a description of the multiplicities $m_{\ll}$ as in Theorem~\ref{thm:linearstrand}.

A natural question to ask is whether the conclusion of Theorem \ref{cor:syzstability} remains valid when $r=n$. The answer is positive and in fact it is not difficult to show that $K_{p,q}(\mc{B}_{\ul{b}})=0$ when all $b_i\gg 0$, so stabilization occurs in the most naive possible way. The best vanishing result for $K_{p,q}(\mc{B}_{\ul{b}})$ that we are aware of is 

\begin{corNp}
 Let $\d=(d_1,\cdots,d_n)$ be a sequence of positive integers and let $\ul{b}=(b_1,\cdots,b_n)$ be a sequence of arbitrary integers. We have $K_{p,2}(\mc{B}_{\ul{b}})=0$ for $p\leq\min\{d_i+b_i:i=1,\cdots,n\}$.
\end{corNp}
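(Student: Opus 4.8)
The plan is to pass from syzygies to sheaf cohomology and then to the combinatorics of packing complexes. The case $p=0$ is trivial, so assume $p\geq 1$; the hypothesis then gives $b_i+d_i\geq p\geq 1$ for all $i$, so every line bundle $\mc{B}_{\ul b}\o\mc{L}^j$ with $j\geq 1$ has nonnegative multidegree on $X=\bb{P}V_1\times\cdots\times\bb{P}V_n$ and hence no higher cohomology. Writing $W=H^0(X,\mc{L})$ and $M_{\mc{L}}=\ker(W\o\mc{O}_X\twoheadrightarrow\mc{L})$ for the Lazarsfeld bundle, the standard bundle description of Koszul cohomology together with $H^1(X,\mc{B}_{\ul b}\o\mc{L})=0$ identifies $K_{p,2}(X,\mc{B}_{\ul b};\mc{L})$ with $H^1\bigl(X,\textstyle\bigwedge^{p+1}M_{\mc{L}}\o\mc{B}_{\ul b}\o\mc{L}\bigr)$. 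One could equally use the periodicity $K_{p,2}(\mc{B}_{\ul b})=K_{p,1}(\mc{B}_{\ul b+\d})$ to argue with the $q=1$ strand; the two routes are parallel and produce the same numerics.

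Next I would resolve $\bigwedge^{p+1}M_{\mc{L}}$ by the Koszul-type complex $0\to\bigwedge^{p+1}M_{\mc{L}}\to\bigwedge^{p+1}W\o\mc{O}_X\to\bigwedge^{p}W\o\mc{L}\to\cdots\to\mc{L}^{p+1}\to 0$ and twist it by $\mc{B}_{\ul b}\o\mc{L}$. All the sheaves appearing are of the form $\bigwedge^{p+1-k}W\o\mc{B}_{\ul b}\o\mc{L}^{k+1}$ with $k\geq 0$, hence of nonnegative multidegree and without higher cohomology, so the hypercohomology spectral sequence degenerates and $H^1$ is computed by the complex of global sections $\bigwedge^{p+1-k}W\o H^0(X,\mc{B}_{\ul b}\o\mc{L}^{k+1})$, which is just a strand of the Koszul complex attached to $\mc{B}_{\ul b}$. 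At this point I invoke the description of Koszul cohomology via packing complexes that underlies this paper: the Bott--K\"unneth formula on $X$ splits each term of this complex into isotypic components for $\GL(V_1)\times\cdots\times\GL(V_n)$, and, after indexing the components by $n$-tuples of partitions $\ll$, each resulting subcomplex computes a summand of the reduced homology of a packing complex $\mc{P}=\mc{P}(p,\ul b,\d;\ll)$. Thus $K_{p,2}(X,\mc{B}_{\ul b};\mc{L})$ becomes a finite direct sum of reduced homology groups $\tilde H_{\bullet}(\mc{P})$, tensored with irreducible representations, the homological degree being the one pinned down by $q=2$.

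It remains to see that, when $p\leq\min\{d_i+b_i\}$, every packing complex $\mc{P}$ that can contribute in that degree is acyclic. The mechanism should be that the bound forces, for every admissible tuple $\ll$, one of the factors $\bb{P}V_i$ to supply a full simplex over which the rest of $\mc{P}$ is coned off, so that $\mc{P}$ is contractible and $\tilde H_{\bullet}(\mc{P})=0$. Proving this acyclicity in exactly the range $p\leq\min\{d_i+b_i\}$ -- that is, a sharp connectivity estimate for packing complexes that specializes, for $n=1$ or for $\d=(1,\dots,1)$, to the classical connectivity bounds for matching complexes of complete uniform hypergraphs and for multidimensional chessboard complexes -- is the one genuinely nontrivial step; everything before it is formal homological algebra plus the vanishing of cohomology of effective line bundles on $X$, so I expect this connectivity input to be the main obstacle. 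As a fallback that sidesteps the combinatorics one could instead induct on $n$: the case $n=1$ is a Green-type statement about twisted Veronese embeddings of $\bb{P}V_1$, and the inductive step would compare $K_{p,2}$ on $X$ with $K_{p,2}$ on $X'\times H$ for a hyperplane $H\subset\bb{P}V_n$ via Green's restriction sequence, whose numerology again yields the inequality $p\leq d_i+b_i$ one factor at a time.
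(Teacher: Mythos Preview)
Your outline is on the right track but takes an unnecessary detour, and you miss a shorter alternative that the paper actually gives.

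First, the detour. You pass from $K_{p,2}(\mc{B}_{\ul b})$ to $H^1\bigl(X,\bigwedge^{p+1}M_{\mc{L}}\o\mc{B}_{\ul b}\o\mc{L}\bigr)$, then resolve $\bigwedge^{p+1}M_{\mc{L}}$ by the truncated Koszul complex and take hypercohomology. But the resulting complex of global sections is exactly the Koszul strand you began with, so steps one through three cancel: you are back at $K_{p,2}(\mc{B}_{\ul b})$ and have gained nothing. The paper's first argument bypasses all of this and applies Theorem~\ref{thm:main} directly: $K_{p,2}^{\d}(\ul b)=0$ is equivalent to $\H_{p-1}(\C_{\N}^{\d})=0$ with $N_i=(p+2)d_i+b_i$, and then one invokes the connectivity theorem of Athanasiadis (stated here as Theorem~\ref{thm:vanishing}), which gives $\H_{p-1}(\C_{\N}^{\d})=0$ whenever $N_i\geq p(d_i+1)+d_i$. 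That inequality rearranges to $d_i+b_i\geq p$, and the proof is two lines. Note also that there is a single packing complex $\C_{\N}^{\d}$ governing all isotypic pieces simultaneously; your notation $\mc{P}(p,\ul b,\d;\ll)$ suggesting a different complex for each $\ll$ is misleading. Your heuristic that ``one factor supplies a full simplex over which the rest is coned off'' is not how Athanasiadis's argument goes, and as stated it would not produce the correct range; the actual input is a nontrivial connectivity bound that the paper simply quotes.

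Second, the missed alternative. Having already written down $K_{p,2}(\mc{B}_{\ul b})=H^1\bigl(X,\bigwedge^{p+1}M_{\mc{L}}\o\mc{O}_X(\d+\ul b)\bigr)$, the paper finishes without any combinatorics: one decomposes $\bigwedge^{p+1}M_{\mc{L}}$ into summands $S_{\ll^1}M_1\boxtimes\cdots\boxtimes S_{\ll^n}M_n$ with $\ll^i\vdash p+1$, applies K\"unneth, and observes that each $M_i$ is $1$--regular with respect to $\mc{O}_{\bb{P}V_i}(1)$, hence $S_{\mu}M_i$ is $(p+1)$--regular for $\mu\vdash p+1$. If $p\leq d_i+b_i$ this forces $H^1(\bb{P}V_i,S_{\mu}M_i\o\mc{O}(d_i+b_i))=0$. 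This Castelnuovo--Mumford regularity argument is both shorter and self-contained; your fallback via induction on $n$ and Green's restriction sequence would also work in principle, but it is more laborious than either of the paper's two proofs.
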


\noindent As we explain in Section \ref{subsec:correspondence} this is a consequence of \cite[Thm.~5.3]{ath}, or of standard Castelnuovo--Mumford regularity arguments. 

If we let $b_1=\cdots=b_n=0$ in Corollary \ref{cor:Np} then we get that the homogeneous coordinate ring of the Segre--Veronese variety corresponding to the embedding via $\mc{L}=\mc{O}(\d)$ satisfies the Green--Lazarsfeld property $N_p$ (introduced in \cite{gre-laz}) for $p\leq\min_i d_i$. This was proved in \cite{her-sch-smi} and strengthened to $p\leq\min_i (d_i+1)$ in \cite{bru-con-rom}. The aforementioned vanishing results are far from being sharp: Rubei proved that the coordinate ring of a Segre variety satisfies $N_p$ for $p\leq 3$ \cite{rubei}; the coordinate ring of the $d$--th Veronese embedding of $\bb{P}^2$ satisfies property $N_p$ for $p\leq 3d-3$ \cite{bir} and it was conjectured in \cite{ott-pao} that the same is true for embeddings of higher dimensional projective spaces. More general asymptotic vanishing conjectures have been formulated by Ein and Lazarsfeld for the syzygies of arbitrary varieties and in particular for Veronese varieties \cite{ein-laz}. In the Appendix we prove that asymptotic vanishing statements for arbitrary varieties can be reduced to the case of Segre--Veronese varieties, which motivates the desire to obtain good vanishing statements for the modules $K_{p,q}(\mc{B}_{\ul{b}})$.

To prove Theorem \ref{cor:syzstability} we show that representation stability (see Section \ref{sec:stability}) holds for \defi{packing complexes} (defined below), and then use \cite[Thm.~5.3]{kar-rei-wac} to translate between the syzygy modules $K_{p,q}(\mc{B}_{\ul{b}})$ and the homology groups of packing complexes. We defer the description of the correspondence between syzygies and the homology of packing complexes, as well as the technical definitions of representation stability to later sections, and focus on packing complexes for the rest of the introduction. We refer the reader to \cites{church-farb,church-ellenberg-farb} for an introduction to representation stability and to \cite{sam-snowden} for an equivalent notion and an extension of the structural theory. We point out that part of the motivation for \cite{sam-snowden} was earlier work by Snowden where certain finiteness properties for syzygies of Segre embeddings are established \cite{snowden}.

\begin{definition}[Packing complexes]\label{def:SVcomplexes} Consider $n$--tuples $\d=(d_1,\cdots,d_n)$ of positive integers, and $\A=(A_1,\cdots,A_n)$ of finite sets. Let $V$ be the set of $n$--tuples $\a=(\a_1,\cdots,\a_n)$, where $\a_i$ is a subset of $A_i$ of size $d_i$. The \defi{packing complex $\C_{\A}^{\d}$} is the simplicial complex whose $(r-1)$--simplices are subsets $\{\a^1,\cdots,\a^r\}\subset V$ where $\a^i_k$ is disjoint from $\a^j_k$ whenever $i\neq j$, for $1\leq i,j\leq r$, $1\leq k\leq n$. Note that for each $i$, the symmetric group $\S_{A_i}$ of permutations of the set $A_i$ acts on $\C_{\A}^{\d}$ and hence also on its homology groups. When $A_i=\{1,\cdots,N_i\}$ for some $n$--tuple $\N=(N_1,\cdots,N_n)$ of positive integers, we write $\C_{\N}^{\d}$ for the corresponding packing complex. It has an action of the product of symmetric groups $\S_{\N}=\S_{N_1}\times\cdots\times\S_{N_n}$.
\end{definition}


\begin{example}\label{ex:c22}
 For $n=2$, $d_1=d_2=1$ and $N_1=N_2=2$ the complex $\C_{(2,2)}^{(1,1)}$ is $1$--dimensional (it can be thought of as a simplicial complex classifying configurations of nonattacking rooks on a $2\times 2$ chessboard). It has four vertices $(1,1),(2,1),(1,2),(2,2)$, and two edges, as shown below:
\[
 \xymatrix{
  (1,1) \ar@{-}[d] & (1,2) \ar@{-}[d] \\
  (2,2) & (2,1) \\
}
\]
If we write $z_{(i,j)}$ for the homology class of the point $(i,j)$, then we get that the reduced homology group $\H_0(\C_{(2,2)}^{(1,1)})$ has a basis consisting of a single element $u=z_{(1,1)}-z_{(2,1)}$. We have that $z_{(1,1)}-z_{(2,2)}$ and $z_{(2,1)}-z_{(1,2)}$ are both zero, as they represent the boundaries of the two edges. To understand $\H_0(\C_{(2,2)}^{(1,1)})$ as a $\S_2\times \S_2$--module, we need to understand how the transpositions $\s_1$ and $\s_2$ in the two factors act on $u$. We have
\[\s_1 \cdot u = z_{(2,1)}-z_{(1,1)} = -u,\]
and
\[\s_2\cdot u = z_{(1,2)} - z_{(2,2)} = z_{(2,1)}-z_{(1,1)} = -u,\]
(where the middle equality uses $z_{(1,1)}=z_{(2,2)}$ and $z_{(2,1)}=z_{(1,2)}$). It follows that both $\s_1$ and $\s_2$ act by multiplication by $-1$, which means that $\H_0(\C_{(2,2)}^{(1,1)})$ is the tensor product of the sign representations of the two factors. The sign representation of $\S_2$ corresponds to the partition $(1,1)$, i.e. to the Young diagram $\Yvcentermath1\tiny\yng(1,1)$. Therefore we can write
\[\H_0(\C_{(2,2)}^{(1,1)}) = \Yvcentermath1\tiny\yng(1,1)\o\yng(1,1).\]
We will see in Theorem \ref{thm:main} that this calculation is equivalent to the fact that the degree two equations defining matrices of rank one (the $2$--factor Segre embedding) are spanned precisely by the $2\times 2$ minors of a generic matrix.
\end{example}

Before stating the main stabilization result for the homology groups of packing complexes (see Theorem \ref{thm:stability} for the more technical statement), we introduce some more notation: given a partition $\delta\vdash r$, we write $[\delta]$ for the corresponding irreducible representation of the symmetric group $\S_r$; $\H_k$ denotes the $k$--th reduced homology group with coefficients in the field $\K$.

\begin{corSTAB} For $k\geq -1$ and fixed values of the parameters $N_{r+1},\cdots,N_n$, there exist a finite number of $n$--tuples of partitions $\ll=(\ll^1,\cdots,\ll^n)$ and multiplicities $m_{\ll}>0$ such that the decomposition
\[\H_k\left(\C_{\N}^{\d}\right)=\bigoplus_{\ll} ([\ll^1[N_1]]\o\cdots\o[\ll^r[N_r]]\o[\ll^{r+1}]\o\cdots\o[\ll^n])^{\oplus m_{\ll}}\]
holds for $N_i\geq 2m\cdot d_i$, $i=1,\cdots,r$, where $m=\min\{\lfloor N_j/d_j\rfloor:j=r+1,\cdots,n\}$.
\end{corSTAB}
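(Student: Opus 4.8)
The plan is to deduce Corollary~\ref{cor:stability} from the more technical representation stability statement (Theorem~\ref{thm:stability}) for the sequence of $\S_{\N}$--modules $\H_k(\C_{\N}^{\d})$, specialized along the first $r$ coordinates while the remaining coordinates $N_{r+1},\cdots,N_n$ are held fixed. The key point is that for a fixed simplicial complex with a $\S_N$--action of the type arising here, a decomposition of a homology group into irreducibles $[\ll[N]]$ with $N$--independent partitions $\ll$ and multiplicities is exactly the assertion that the corresponding $\FI$--module (or multivariate $\FI$--module, in the sense of Church--Ellenberg--Farb) is finitely generated and its generators sit in bounded degree. So the real content is twofold: first, that the packing complexes $\C_{\N}^{\d}$ for varying $N_1,\cdots,N_r$ fit together into a consistent inductive system carrying compatible symmetric group actions; and second, that the homology of this system stabilizes with an \emph{effective} bound, namely $N_i\geq 2m\cdot d_i$ where $m=\min\{\lfloor N_j/d_j\rfloor:j>r\}$.

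First I would set up the combinatorial maps. Enlarging $A_i$ from $\{1,\cdots,N_i\}$ to $\{1,\cdots,N_i+1\}$ induces an inclusion of simplicial complexes $\C_{\N}^{\d}\hookrightarrow\C_{\N+e_i}^{\d}$ (here $e_i$ is the $i$-th standard basis vector), compatible with the symmetric group actions in the sense needed for a multivariate $\FI$--module structure on $\bigoplus_{\N}\H_k(\C_{\N}^{\d})$. The content of Theorem~\ref{thm:stability} (which I would invoke as the main input) is that each of these multivariate $\FI$--modules is finitely generated, with generators in degree bounded by a quantity governed by the dimension of the complex, i.e.\ by the size of the simplices, which is in turn controlled by the ``packing'' constraint: at most $\lfloor N_j/d_j\rfloor$ pairwise--disjoint $d_j$--subsets of an $N_j$--set, so a simplex has at most $m=\min_{j>r}\lfloor N_j/d_j\rfloor$ vertices once $N_{r+1},\cdots,N_n$ are fixed. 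Translating ``finitely generated in bounded degree'' into the explicit form in the statement is then the standard dictionary between $\FI$--modules and sequences of $\S_N$--representations: a finitely generated $\FI$--module in degree $\leq D$ decomposes, for $N\geq 2D$, as $\bigoplus_\ll [\ll[N]]^{\oplus m_\ll}$ with $\ll$ and $m_\ll$ independent of $N$ (this is the stable range $N\geq 2D$; here $D=m\cdot d_i$ in coordinate $i$, giving $N_i\geq 2m d_i$).

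The remaining work is to verify the hypotheses of Theorem~\ref{thm:stability} in this setting and to track the constant $D$ precisely. The generation degree must be bounded by the number of vertices a simplex can use in the coordinates $1,\cdots,r$; the crucial observation is that a simplex of $\C_{\N}^{\d}$ has at most $m$ vertices total (by the packing constraint in any coordinate $j>r$), hence uses at most $m$ blocks of size $d_i$ in coordinate $i\leq r$, so the relevant degree bound is $m\cdot d_i$. One should check that this bound is uniform in the reduced homological degree $k$ (it is, since every simplex satisfies it) and that it does not depend on the fixed parameters $N_{r+1},\cdots,N_n$ beyond through $m$. Finally, $m_\ll>0$ by definition (we only list the partitions that actually occur), and finiteness of the list of $\ll$ is again finite generation.

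The main obstacle I expect is the effective stable range: proving finite generation of the multivariate $\FI$--modules is one thing (it follows from a Noetherianity argument on the category of simplicial complexes of this packing type, or from Church--Ellenberg--Farb--style machinery), but pinning down that the generators live in degree $\leq m\cdot d_i$ rather than some larger a~priori bound requires a genuinely geometric input about the complexes $\C_{\N}^{\d}$ — essentially a connectivity or ``representation stability with a linear range'' estimate, which is exactly where the combinatorial topology of packing complexes (their relation to chessboard and matching complexes) enters. This is the step that Theorem~\ref{thm:stability} is doing the heavy lifting for, and verifying that its hypotheses apply with the claimed constant, and that the $2D$ stable range then yields precisely $N_i\geq 2md_i$, is the delicate bookkeeping of the argument.
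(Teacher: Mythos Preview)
Your high--level plan is right: the corollary is a direct consequence of Theorem~\ref{thm:stability}, and indeed the paper gives no separate proof of it. Once Theorem~\ref{thm:stability} says that the pull--back $\FI^r$--module $\mc{H}_k(N_{r+1},\cdots,N_n)$ has stable range $\N'\geq 2m\cdot(d_1,\cdots,d_r)$, the decomposition in the corollary is exactly what ``stable range'' means, via Definition~\ref{def:repstability} and Remark~\ref{rem:filtrationstable}. So the corollary itself needs essentially one sentence.

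Where your write--up diverges from the paper is in the surrounding discussion, which is really a sketch of how to prove Theorem~\ref{thm:stability} rather than how to deduce the corollary from it. You propose to bound the generation degree of the $\FI^r$--module by $m\cdot d_i$ (using that any simplex of $\C_{\N}^{\d}$ has at most $m$ vertices once the last $n-r$ coordinates are fixed) and then invoke the Church--Ellenberg--Farb ``weight $+$ stability degree'' bound to get the $2D$ stable range. The paper does not argue this way: it proves Theorem~\ref{thm:stability} by induction on the fixed tuple $(N_{r+1},\cdots,N_n)$, using the long exact sequence of Proposition~\ref{prop:les} to relate $\mc{H}_k(N_{r+1},\cdots,N_n)$ to $\mc{H}_k(N_{r+1},\cdots,N_n-1)$ and a convolution $T(\mu)*\mc{H}_k(N_{r+1}-d_{r+1},\cdots,N_n-d_n)$, and then appeals to Lemma~\ref{lem:trianglestable} and Theorem~\ref{thm:convstable} to propagate the stable--range bound. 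Your generation--degree heuristic is plausible and would give the same constant, but note the subtlety you flag yourself: bounding the \emph{generation} degree by $m\cdot d_i$ is easy (all chains are supported in degree $\leq m\cdot d_i$), but the $2D$ stable range for a finitely generated $\FI$--module requires control of both weight and stability degree (or of relations, not just generators), and it is not obvious a~priori that the stability degree is also $\leq m\cdot d_i$. The paper's inductive argument sidesteps this by never invoking a generation/relation bound directly.
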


Packing complexes generalize the (multidimensional) \defi{chessboard complexes} (the case $d_1=d_2=\cdots=d_n=1$) and the \defi{matching complexes of complete graphs} (the case $n=1$ and $d_1=2$). The study of the integral homology and of the connectedness properties of these complexes has been a topic of interest in combinatorial topology that originated in \cite{bouc} (see \cites{blvz,ziegler,wachs,ath,sha-wachs}). The approach of relating syzygies to simplicial homology was used by Reiner and Roberts \cite{rei-rob} to give an independent proof and a generalization of the results of Lascoux and J\'ozefiak--Pragacz--Weyman \cites{lascoux,jpw} on the Betti numbers of the ideals of $2\times 2$--minors of generic matrices and generic symmetric matrices. A particularly beautiful determination of the rational homology of $2$--dimensional chessboard complexes was obtained by Friedman and Hanlon \cite{friedman-hanlon} using \defi{combinatorial Laplacians}. The corresponding calculation for matching complexes of complete graphs was subsequently obtained by Dong and Wachs \cite{dong-wachs}. 

The paper is organized as follows. In Section \ref{sec:prelim} we recall some basic facts from representation theory and introduce the syzygy functors whose stability properties we intend to study. We also describe the relationship between these functors and the reduced homology groups of packing complexes. In Section \ref{sec:stability} we introduce the basic notions of representation stability in the multivariate setting, following the univariate case described in \cites{church-farb,church,church-ellenberg-farb}. In Section \ref{sec:induction} we set up an inductive procedure for studying the homology of the packing complexes by exhibiting a long exact sequence that relates the reduced homology groups of several of these complexes. We prove representation stability for the homology groups of packing complexes in Section \ref{sec:SVstability}, based on the results in Sections \ref{sec:stability} and \ref{sec:induction}. We end with the calculation of the linear syzygies for a family of line bundles on Segre varieties using combinatorial Laplacians in Section \ref{sec:examples}. In the Appendix we show how the asymptotic vanishing conjecture of Ein and Lazarsfeld for syzygies of arbitrary varieties reduces to a vanishing statement for syzygies of line bundles on a product of at most three projective spaces.

\section{Preliminaries}\label{sec:prelim}

\subsection{Representation Theory}\label{sec:repthy}

For an introduction to the representation theory of general linear and symmetric groups, see~\cite{ful-har} and also \cite[Chapter~1, Appendix~A]{macdonald}. If $\mu=(\mu_1\geq\mu_2\geq\cdots)$ is a partition of $r$ (written $\mu\vdash r$, or $r=|\mu|$) and $W$ a vector space over a field $\K$ of characteristic zero, then $S_{\mu}W$ (resp. $[\mu]$) denotes the irreducible representation of the general linear group $GL(W)$ (resp. of the symmetric group $\mathfrak{S}_{r}$) corresponding to $\mu$. If $\mu=(r)$, then $S_{\mu}W$ is $\rm{Sym}^r(W)$ and $[\mu]$ is the trivial $\mathfrak{S}_{r}$--representation. The $GL(W)$-- (resp. $\mathfrak{S}_r$--) representations $U$ that we consider decompose as $U = \bigoplus_{\mu} U_{\mu}$ where $U_{\mu}\simeq (S_{\mu}W)^{m_{\mu}}$ (resp. $U_{\mu}\simeq[\mu]^{m_{\mu}}$) is the \defi{$\mu$--isotypic component} of $U$. We make the analogous definitions when we work over products of general linear (resp. symmetric) groups, replacing partitions by $n$--tuples of partitions (called \defi{$n$--partitions} and denoted by $\vdash^n$). We write $\S_{A}$ for the group of permutations of a set $A$, and $\S_{\A}=\S_{A_1}\times\cdots\times\S_{A_n}$ for an $n$--tuple $\A=(A_1,\cdots,A_n)$ of sets. $\S_{\A}$ is isomorphic to the group $\S_{\N}=\S_{N_1}\times\cdots\times\S_{N_n}$ associated to the $n$--tuple $\N=(N_1,\cdots,N_n)$, where $N_i=|A_i|$. If $\ll\vdash^n\N$, $\ll=(\ll^1,\cdots,\ll^n)$, we write $S_{\ll}$ for the tensor product of Schur functors $S_{\ll^1}\oo\cdots\oo S_{\ll^n}$, and $[\ll]$ for the irreducible $\S_{\N}$--representation $[\ll^1]\oo\cdots\oo[\ll^n]$.

Given $n$--tuples $\N=(N_1,\cdots,N_n)$ and $\N'$, we say that $\N'$ is a \defi{successor} of $\N$ (or $\N$ a \defi{predecessor} of $\N'$, or that $\N,\N'$ are \defi{consecutive}) if $N_i' = N_i + 1$ for some $i$, and $N_j=N_j'$ for $j\neq i$. In general we write $\N\leq\N'$ if $N_i\leq N_i'$ for all $i$.

Following the notation in \cite{church-farb}, if $\ll$ is an $n$--partition, we write $\ll[\N]$ for the $n$--partition $\tilde{\ll}\vdash^n\N$ defined by $\tilde{\ll}^i=(N_i-|\ll^i|,\ll^i_1,\ll^i_2,\cdots)$ (of course this makes sense only if $N_i\geq|\ll^i|+\ll^i_1$). For instance, when $n=2$, $\ll=((3,1),(2,2,1))$ and $\N=(8,7)$, we have $|\ll^1|=4$, $|\ll^2|=5$, and $\ll[\N]=((4,3,1),(2,2,2,1))$. We will often picture $n$--partitions as formal tensor powers of Young diagrams, and interpret them according to the context as either irreducible representations of a product of general linear groups, or of a product of symmetric groups:
\[\Yvcentermath1\tiny\ll=\yng(3,1)\o\yng(2,2,1),\quad\ll[\N]=\yng(4,3,1)\o\yng(2,2,2,1).\]
Note that for $\N=(8,6)$, the $2$--partition $\ll[\N]$ is not defined.

If $U_i$ is a $G_i$--representation, $i=1,2$, for some groups $G_1,G_2$, then the external tensor product $U_1\bt U_2$ is a $G_1\times G_2$--representation (note that whenever we will try to emphasize the distinction between external and internal tensor products, we'll be using the symbol $\bt$ instead of $\o$). We write $\1_G$ (or just $\1$) for the trivial representation of a group $G$. For a subgroup $H\subset G$ and representations $U$ of $H$ and $W$ of $G$, we write
\[\rm{Ind}_H^G(U)=K[G]\o_{K[H]} U\ \rm{ and }\ \rm{Res}_H^G(W)=W_H\]
for the \defi{induced representation} of $U$ and the \defi{restricted representation} of $W$ respectively, where $K[M]$ denotes the group algebra of a group $M$, and $W_H$ is just $W$, regarded as an $H$--module. 

\subsection{The syzygy functors $K_{p,q}^{\d}(\ul{b})$}\label{subsec:syzfun} If $X\subset\bb{P}W$ is a projective variety, embedded by the complete linear series corresponding to some line bundle $\mc{L}$ (so that $W=H^0(X,\mc{L})$), we associate to any sheaf $\mc{B}$ on $X$ the \defi{Koszul cohomology group $K_{p,q}(X,\mc{B};\mc{L})$} (or simply $K_{p,q}(\mc{B})$ when $X$ and $\mc{L}$ are understood from the context) defined as the homology of the $3$--term complex
\begin{equation}\label{eq:koszul}
\bigwedge^{p+1}W\oo H^0(X,\mc{B}\oo\mc{L}^{q-1})\to\bigwedge^{p}W\oo H^0(X,\mc{B}\oo\mc{L}^{q})\to\bigwedge^{p-1}W\oo H^0(X,\mc{B}\oo\mc{L}^{q+1}) 
\end{equation}

Consider now the case when $X=\bb{P}V_1\times\cdots\times\bb{P}V_n$ is a product of projective spaces and $\mc{L}=\mc{O}(d_1,\cdots,d_n)$ is an ample line bundle on $X$. Write $\mc{B}_{\ul{b}}=\mc{O}(b_1,\cdots,b_n)$ for arbitrary integers $b_i$. It is clear that $X,\mc{L},\mc{B}_{\ul{b}}$ depend functorially of the vector spaces $V_1,\cdots,V_n$, thus the same is true about the Koszul cohomology groups $K_{p,q}(\mc{B}_{\ul{b}})$. We write $K_{p,q}^{\d}(\ul{b}):Vec^n\to Vec$ for the functor on finite dimensional $\K$--vector spaces that assigns to an $n$--tuple $(V_1,\cdots,V_n)$ the corresponding syzygy module $K_{p,q}(\mc{B}_{\ul{b}})$. As we will see in Theorem \ref{thm:main}, these functors are controlled by the homology of the packing complexes introduced in Definition \ref{def:SVcomplexes}. Figure \ref{fig:2factorsyz} below describes the beginning of the \defi{equivariant Betti table} $(K_{p,q}^{\d})$ for $\d=(1,1)$ (corresponding to the two--factor Segre embedding): dashes correspond to $K_{p,q}^{\d}=0$, and instead of writing $S_{\ll^1}\o S_{\ll^2}$, we picture the appropriate diagrams.
\begin{figure}[h]
\[
 \begin{array}{|c|c|c|c|c|c}
  \hline
  \K & - & - & - & - & \cdots\\
  \hline
   & & & & & \\
   &  & \tiny\Yvcentermath1\yng(2,1)\o\yng(1,1,1) & \tiny\Yvcentermath1\yng(3,1)\o\yng(1,1,1,1) + \yng(2,1,1)\o\yng(2,1,1) & \tiny\Yvcentermath1\yng(4,1)\o\yng(1,1,1,1,1) + \yng(3,1,1)\o\yng(2,1,1,1) & \\
  - & \tiny\Yvcentermath1\yng(1,1)\o\yng(1,1) & + & + & + & \cdots \\
   &  & \tiny\Yvcentermath1\yng(1,1,1)\o\yng(2,1) & \tiny\Yvcentermath1\yng(1,1,1,1)\o\yng(3,1) & \tiny\Yvcentermath1\yng(2,1,1,1)\o\yng(3,1,1) + \yng(1,1,1,1,1)\o\yng(4,1) \\ 
   & & & & &\\
  \hline
   & & & & &\\
  - & - & - & - & \tiny\Yvcentermath1\yng(2,2,2)\o\yng(2,2,2) & \cdots\\
   & & & & & \\
  \hline
  \vdots & \vdots & \vdots & \vdots & \vdots & \ddots
 \end{array}
\]
\caption{Syzygy functors for two--factor Segre embeddings}
\label{fig:2factorsyz}
\end{figure}

\subsection{The correspondence between syzygy functors and the homology of packing complexes}\label{subsec:correspondence}

In this section we describe the correspondence between the syzygy functors from the previous section and the (reduced) homology groups of the packing complexes introduced in Definition \ref{def:SVcomplexes}. This correspondence has been exploited by Reiner and Roberts \cite{rei-rob} to compute the syzygy functors for the quadratic Veronese and $2$--factor Segre varieties. It is an instance of more general results that relate syzygies of graded modules over affine semigroup rings to simplicial homology (\cite{bru-her}, \cite[Thm.~7.9]{sta}, \cite[Thm.~12.12]{stu}).

\begin{theorem}[{\cite[Thm.~5.3]{kar-rei-wac}}]\label{thm:main}
 Let $p,q$ be nonnegative integers, let $\d=(d_1,\cdots,d_n)$ be a sequence of positive integers, and let $\ul{b}=(b_1,\cdots,b_n)$ be a sequence of arbitrary integers. Write $N_i=(p+q)\cdot d_i+b_i$, and let $\N=(N_1,\cdots,N_n)$. Consider an $n$--partition $\ll\vdash^n\N$. The multiplicity of $S_{\ll}$ in $K_{p,q}^{\d}(\ul{b})$ coincides with the multiplicity of the irreducible $\S_{\N}$--representation $[\ll]$ in $\H_{p-1}(\C_{\N}^{\d})$.
\end{theorem}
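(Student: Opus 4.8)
The plan is to realize the complex $(\ref{eq:koszul})$ as a strand of an explicit combinatorial Koszul complex attached to a polynomial ring, split that complex according to monomials, recognize each piece as a reduced simplicial chain complex, and finally extract the $\GL$-isotypic multiplicities by passing to the ``multilinear'' (squarefree) strand via Schur--Weyl duality. This is the point of view of Reiner--Roberts \cite{rei-rob} and, more generally, of the theory relating syzygies over affine semigroup rings to simplicial homology (\cite{bru-her}, \cite[Thm.~7.9]{sta}, \cite[Thm.~12.12]{stu}). Concretely, let $R=\Sym(V_1)\otimes\cdots\otimes\Sym(V_n)$, a $\bb{Z}^n$--graded polynomial ring with $V_i$ in degree $e_i$, so that $H^0(X,\mc{O}(\ul{c}))=R_{\ul{c}}$ for every $\ul{c}$ (with $R_{\ul{c}}=0$ if some $c_i<0$). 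Then $W=H^0(X,\mc{L})=R_{\d}$ and $H^0(X,\mc{B}_{\ul{b}}\otimes\mc{L}^j)=R_{j\d+\ul{b}}$, so $(\ref{eq:koszul})$ is the total--degree--$(p+q)$ strand of the Koszul complex $\bigwedge^{\bullet}R_{\d}\otimes_{\K}\left(\bigoplus_{j\geq 0}R_{j\d+\ul{b}}\right)$, whose differential is induced by the multiplication maps $R_{\d}\otimes R_{\ul{c}}\to R_{\ul{c}+\d}$. Fix once and for all a monomial basis of $R$ (i.e.\ bases of the $V_i$): then $\bigwedge^{k}R_{\d}$ has the basis of wedges $w_1\wedge\cdots\wedge w_k$ of distinct degree--$\d$ monomials, each basis vector $(w_1\wedge\cdots\wedge w_k)\otimes m$ of the Koszul complex acquires a \emph{total monomial} $w_1\cdots w_k\cdot m$, the differential preserves it, and along $(\ref{eq:koszul})$ this monomial always has multidegree $\N=(p+q)\d+\ul{b}$.

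Consequently $(\ref{eq:koszul})$ splits as $\bigoplus_{\mu}C^{(\mu)}_{\bullet}$ over monomials $\mu$ of multidegree $\N$, where $C^{(\mu)}_k$ is spanned by the $(w_1\wedge\cdots\wedge w_k)\otimes m$ with total monomial $\mu$; since the factor $m=\mu/(w_1\cdots w_k)$ is forced, $C^{(\mu)}_k$ is indexed by the $k$--element sets of degree--$\d$ monomial divisors of $\mu$ whose product divides $\mu$. Comparing the Koszul sign rule with the simplicial boundary map, using the fixed order on degree--$\d$ monomials, shows that $C^{(\mu)}_{\bullet}$ is, up to the index shift $k\mapsto k-1$, the reduced chain complex of the simplicial complex $\Delta^{(\mu)}$ whose vertices are the degree--$\d$ monomial divisors of $\mu$ and whose faces are the subsets with product dividing $\mu$ (the degree--$0$ term $\K\cdot\mu$ being the empty face). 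Hence $K_{p,q}(\mc{B}_{\ul{b}})=\bigoplus_{\mu}\H_{p-1}(\Delta^{(\mu)})$, and this decomposition is equivariant for the torus $(\K^{\times})^{\dim V_1}\times\cdots\times(\K^{\times})^{\dim V_n}$ acting by scaling basis vectors, with $C^{(\mu)}$ sitting in the weight equal to the exponent vector of $\mu$.

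To read off the multiplicity of $S_{\ll}V=S_{\ll^1}V_1\otimes\cdots\otimes S_{\ll^n}V_n$, I would invoke Schur--Weyl duality. Enlarging the $V_i$ (the functor $K_{p,q}^{\d}(\ul{b})$ is polynomial), take $\dim V_i=N_i$ with basis $e^{(i)}_1,\dots,e^{(i)}_{N_i}$; since $K_{p,q}^{\d}(\ul{b})$ is homogeneous of degree $N_i$ in $V_i$, the multiplicity of $S_{\ll}V$ in it equals the multiplicity of the irreducible $\S_{\N}=\S_{N_1}\times\cdots\times\S_{N_n}$--representation $[\ll]$ in the \emph{multilinear part} of $K_{p,q}(\mc{B}_{\ul{b}})$ --- the $(1^{N_1};\cdots;1^{N_n})$--weight space, on which $\S_{\N}$ acts by permuting the $e^{(i)}_j$ --- because the $(1^{N_i})$--weight space of the Schur functor $S_{\ll^i}$ is the Specht module $[\ll^i]$. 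As passing to a weight space is exact, the multilinear part of the homology is the homology of the multilinear part of $(\ref{eq:koszul})$, which by the previous paragraph is exactly $C^{(\mu_0)}_{\bullet}$ for the unique squarefree monomial $\mu_0=\prod_{i,j}e^{(i)}_j$ of multidegree $\N$. Finally, since $\mu_0$ is squarefree a degree--$\d$ divisor of it is $\prod_i\prod_{k\in\alpha_i}e^{(i)}_k$ for subsets $\alpha_i\subseteq\{1,\dots,N_i\}$ of size $d_i$, and such a family has product dividing $\mu_0$ precisely when the $\alpha_i$'s are pairwise disjoint in each coordinate $i$; thus $\Delta^{(\mu_0)}=\C_{\N}^{\d}$, $\S_{\N}$--equivariantly, so the multilinear part of $K_{p,q}(\mc{B}_{\ul{b}})$ is $\H_{p-1}(\C_{\N}^{\d})$ as an $\S_{\N}$--module, which is the assertion of the theorem.

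The step I expect to demand the most care is the identification of complexes: matching the Koszul signs with the reduced simplicial differential of $\Delta^{(\mu)}$ (a routine but slightly delicate sign check), and being precise that taking the squarefree strand computes the multilinear part of the \emph{homology} and not merely of the chain groups (immediate from exactness of weight--space projection, but it must be stated). The representation--theoretic input --- that the Specht module arises as the extreme weight space of the corresponding Schur functor --- is classical, and the geometric inputs (the computation of $H^0$ of line bundles on a product of projective spaces; note that projective normality of the Segre--Veronese embedding is not even needed, since we argue directly with $(\ref{eq:koszul})$) are standard.
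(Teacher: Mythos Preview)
The paper does not supply its own proof of this statement; it is quoted from \cite[Thm.~5.3]{kar-rei-wac} (the phrase ``as in the proof of Theorem~\ref{thm:main}'' in the proof of Theorem~\ref{thm:linearstrand} refers implicitly to that source). Your argument is correct and is precisely the approach of \cite{rei-rob} and \cite{kar-rei-wac}: split the Koszul complex over the polynomial ring by fine multidegree, identify each summand with the reduced chain complex of a simplicial complex on monomial divisors, note that the squarefree strand gives exactly the packing complex $\C_{\N}^{\d}$, and convert $\GL$--multiplicities to $\S_{\N}$--multiplicities by passing to the multilinear weight space via Schur--Weyl duality. The two points you flag---matching Koszul signs with simplicial signs, and exactness of the weight--space projection---are indeed the only places that need care, and both are routine.
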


We point out a vanishing result for the homology of packing complexes, which via the above theorem yields the vanishing of certain syzygy functors. We note that Theorem \ref{thm:vanishing} below in fact holds for integral homology, and that it would be desirable from the point of view of algebraic geometry to obtain sharper vanishing results for the rational homology of packing complexes.

\begin{theorem}[{\cite[Thm.~5.3]{ath}}]\label{thm:vanishing}
 Let $\d=(d_1,\cdots,d_n)$ be a sequence of positive integers and let $p\geq 0$. If $\N=(N_1,\cdots,N_n)$ with $N_i\geq p\cdot(d_i+1)+d_i$, $i=1,\cdots,n$, then $\H_{p-1}(\C_{\N}^{\d})=0$.
\end{theorem}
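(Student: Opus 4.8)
I would prove Theorem~\ref{thm:vanishing} combinatorially, working directly with the complexes $\C_{\N}^{\d}$ — this also accounts for the fact, remarked above, that the statement holds with integral coefficients. (Via Theorem~\ref{thm:main} and the identity $K_{p,q}^{\d}(\ul b)=K_{p,q+1}^{\d}(\ul b-\d)$ the statement is equivalent to the Koszul vanishing $K_{p,2}^{\d}(\ul b)=0$ for $p\le\min_i(b_i+d_i)$ of Corollary~\ref{cor:Np}, which could instead be extracted from a Castelnuovo--Mumford regularity estimate for line bundles on $\bb{P}V_1\times\cdots\times\bb{P}V_n$.) The combinatorial argument rests on one structural fact: after deleting the $d_i$ elements of $\a_i$ from each $A_i$, the link of a vertex $\a=(\a_1,\cdots,\a_n)$ of $\C_{\N}^{\d}$ becomes a smaller packing complex,
\[\mathrm{lk}_{\C_{\N}^{\d}}(\a)\;\cong\;\C_{\N-\d}^{\d},\]
which suggests an induction on $p$ in which $\N$ decreases by $\d$.

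I would first dispose of the \emph{easy half}: already when $N_i\ge(p+1)d_i$ for all $i$ — in particular under the hypothesis $N_i\ge p(d_i+1)+d_i$ — every packing of size $\le p$ can be extended, since it occupies at most $p\,d_k$ of the $N_k$ elements of $A_k$, leaving $\ge d_k$ of them free, so one may adjoin a vertex disjoint in each coordinate from all the others, which is automatically new. Hence $\C_{\N}^{\d}$ has no maximal face of dimension $<p$. This is precisely the situation in which the Bj\"{o}rner--Wachs theory of non-pure shellability (equivalently, vertex-decomposability) forces $\H_{p-1}=0$: such a complex is homotopy equivalent to a wedge of spheres, each of dimension equal to that of some ``homology facet,'' and hence of dimension $\ge p$.

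The mechanism of the induction is the Mayer--Vietoris sequence for the decomposition of $\C_{\N}^{\d}$ into the vertex deletion $\C_{\N}^{\d}\setminus\a$ and the contractible closed star of $\a$, whose intersection is $\mathrm{lk}(\a)$: it reduces $\H_{p-1}(\C_{\N}^{\d})=0$ to the vanishing of $\H_{p-1}(\C_{\N}^{\d}\setminus\a)$ together with $\H_{p-2}(\mathrm{lk}(\a))=\H_{p-2}(\C_{\N-\d}^{\d})$. Since the inequality $N_i\ge p(d_i+1)+d_i$ passes to $N_i-d_i\ge(p-1)(d_i+1)+d_i$, the link term is covered by the inductive hypothesis at level $p-1$.

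The main difficulty, and the place where real work is needed, is that the deletion $\C_{\N}^{\d}\setminus\a$ is \emph{not} a packing complex, so the induction does not close as stated. One remedy is to run the whole argument inside the enlarged family of complexes $\C_{\N}^{\d}\setminus W$, with $W$ a set of forbidden vertices: deleting a further vertex stays in the family, the link of a vertex of $\C_{\N}^{\d}\setminus W$ is again of the form $\C_{\N-\d}^{\d}\setminus W'$, and one carries out a double induction — on $p$ and on the number of unforbidden vertices — checking at each stage that the facet-dimension bound of the easy half persists and treating by hand the degenerate base cases (the void complex and the full simplex). A cleaner alternative, and the one matching the inductive device of Section~\ref{sec:induction}, is to delete not a single vertex but the entire fibre $\{\a:x\in\a_k\}$ over a fixed coordinate $k$ and element $x\in A_k$: now the deletion \emph{is} a genuine packing complex (the one obtained by lowering $N_k$ by one), while the deleted vertices are parametrised by the packing complex on the shrunken ground set with $d_k$ replaced by $d_k-1$, so one obtains a long exact sequence among three honest packing complexes; iterating it and bounding dimensions via the easy half concludes. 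Either way, the substance is the bookkeeping — propagating the inequality $N_i\ge p(d_i+1)+d_i$ with the correct decrement of $p$ through all links and deletions that arise, and making sure that no auxiliary complex ever acquires a maximal face of dimension $<p$, so that no homology can survive in degree $p-1$.
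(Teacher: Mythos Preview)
The paper does not give a proof of Theorem~\ref{thm:vanishing}; it is quoted from Athanasiadis \cite[Thm.~5.3]{ath} and used as a black box (to derive Corollary~\ref{cor:Np} and to establish the triviality of the $\FI^n$--modules $\mc{H}_k$ in Theorem~\ref{thm:stability}). So there is nothing in the paper to compare your argument against.

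For what it is worth, your sketch is in the right spirit and close to what Athanasiadis actually does: he proves that packing complexes are (nonpure) vertex--decomposable, hence shellable, so their homotopy type is a wedge of spheres indexed by certain facets; the connectivity bound then follows from your ``easy half,'' namely that under the hypothesis there are no facets of dimension $<p$. You correctly identify both ingredients but do not carry out the vertex--decomposability induction, which is the substantive part. Your Mayer--Vietoris alternative via the long exact sequence of Section~\ref{sec:induction} is a reasonable reformulation---vertex--decomposability is, after all, precisely a deletion/link recursion---but be careful with the final sentence: it reads as though ``no maximal face of dimension $<p$'' by itself forces $\H_{p-1}=0$, which is false in general (any triangulation of a closed surface has all facets $2$--dimensional yet nonzero $\H_1$). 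The bookkeeping you allude to must actually propagate the \emph{vanishing} through the induction, not merely the facet bound; the facet bound is only what converts shellability, once established, into the desired connectivity.
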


\begin{corollary}\label{cor:Np}
 Let $\d=(d_1,\cdots,d_n)$ be a sequence of positive integers and let $\ul{b}=(b_1,\cdots,b_n)$ be a sequence of arbitrary integers. We have $K_{p,2}^{\d}(\ul{b})=0$ for $p\leq\min\{d_i+b_i:i=1,\cdots,n\}$.
\end{corollary}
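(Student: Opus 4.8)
The plan is to deduce Corollary~\ref{cor:Np} from the combination of Theorem~\ref{thm:main} and Theorem~\ref{thm:vanishing}. Set $q=2$, so that $p+q = p+2$, and put $N_i = (p+q)\cdot d_i + b_i = (p+2)d_i + b_i$. By Theorem~\ref{thm:main}, every irreducible $S_{\ll}$ appearing in $K_{p,2}^{\d}(\ul{b})$ corresponds to an $n$--partition $\ll\vdash^n\N$ whose multiplicity equals the multiplicity of $[\ll]$ in $\H_{p-1}(\C_{\N}^{\d})$; hence $K_{p,2}^{\d}(\ul{b})=0$ if and only if $\H_{p-1}(\C_{\N}^{\d})=0$. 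So it suffices to check that the hypothesis $p\leq\min\{d_i+b_i : i=1,\cdots,n\}$ forces, for each $i$, the inequality $N_i\geq p\cdot(d_i+1)+d_i$ required by Theorem~\ref{thm:vanishing}.

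The core of the argument is therefore the elementary estimate: from $p\leq d_i + b_i$ we get $b_i\geq p - d_i$, and then
\[
N_i = (p+2)d_i + b_i \;\geq\; (p+2)d_i + p - d_i \;=\; (p+1)d_i + p \;=\; p(d_i+1) + d_i,
\]
which is exactly the bound in Theorem~\ref{thm:vanishing}. Since this holds for every $i$, that theorem gives $\H_{p-1}(\C_{\N}^{\d})=0$, and Theorem~\ref{thm:main} then gives $K_{p,2}^{\d}(\ul{b})=0$, as desired. One should also note the degenerate case $p=0$, where $K_{0,2}^{\d}(\ul{b})$ is the cokernel in the relevant $3$--term Koszul complex; but this is covered uniformly by the same chain of inequalities and by the convention that $\H_{-1}$ of a nonempty complex vanishes, so no separate treatment is needed beyond remarking on it.

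There is no real obstacle here: the statement is a formal consequence of two quoted results, and the only thing to verify is the arithmetic matching the two sets of hypotheses. The alternative route alluded to in the text—via Castelnuovo--Mumford regularity—would instead argue that $\mc{B}_{\ul{b}}$ together with the very ample $\mc{L}$ is $2$--regular in an appropriate range, forcing the quadratic-strand Koszul cohomology to vanish; I would mention this only as a remark rather than carry it out, since the homological route above is shorter and self-contained given what has already been established in the excerpt. The mild subtlety worth flagging explicitly is simply that $\ul{b}$ is allowed to have negative entries, so one must make sure the inequalities are interpreted over $\bb{Z}$ and that $H^0(X,\mc{B}_{\ul{b}}\oo\mc{L}^{j})$ may vanish for small $j$ without affecting the conclusion.
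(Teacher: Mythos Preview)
Your proof is correct and follows essentially the same route as the paper: reduce via Theorem~\ref{thm:main} to the vanishing of $\H_{p-1}(\C_{\N}^{\d})$ with $N_i=(p+2)d_i+b_i$, and then verify the arithmetic inequality needed to invoke Theorem~\ref{thm:vanishing}. The only difference is that the paper actually carries out the alternative Castelnuovo--Mumford regularity argument in full (via the identification $K_{p,2}^{\d}(\mc{B}_{\ul{b}})=H^1(X,\bigwedge^{p+1}M\oo\mc{O}_X(\d+\ul{b}))$, a K\"unneth decomposition, and the $1$--regularity of the restricted tautological bundles $M_i$), whereas you only allude to it.
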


\begin{proof} The condition $K_{p,2}^{\d}(\ul{b})=0$ is equivalent via Theorem \ref{thm:main} to the vanishing of $\H_{p-1}(\C_{\N}^{\d})$, where $N_i=(p+2)\cdot d_i+b_i$. Applying Theorem \ref{thm:vanishing} we get that this vanishing holds as soon as $(p+2)\cdot d_i+b_i\geq p\cdot(d_i+1)+d_i$, which is equivalent to $d_i+b_i\geq p$. 

Alternatively, with the notation in Section \ref{subsec:syzfun} we have by \cite[Prop~3.2]{ein-laz} that
\begin{equation}\label{eq:kpqH1}
K_{p,2}^{\d}(\mc{B}_{\ul{b}})=H^1(X,\bigwedge^{p+1}M\oo\mc{O}_X(d_1+b_1,\cdots,d_n+b_n)),
\end{equation}
where $M=\rm{Ker}(H^0(X,\mc{O}_X(\d))\oo\mc{O}_X\to\mc{O}_X(\d))$ is the restricted tautological bundle corresponding to the embedding of $X$ by $\mc{O}(\d)$. We have $M=M_1\boxtimes M_2\boxtimes\cdots\boxtimes M_n$, where $M_i=\rm{Ker}(H^0(\bb{P}V_i,\mc{O}_{\bb{P}V_i}(d_i))\oo\mc{O}_{\bb{P}V_i}\to\mc{O}_{\bb{P}V_i}(d_i))$, so $\bigwedge^{p+1}M$ decomposes as a direct sum of $S_{\ll^1}M_1\boxtimes\cdots\boxtimes S_{\ll^n}M_n$ for $\ll^i\vdash (p+1)$. Using K\"unneth's formula, the vanishing of the terms in (\ref{eq:kpqH1}) reduces to proving that 
\[H^1(\bb{P}V_i,S_{\mu} M_i\oo\mc{O}_{\bb{P}V_i}(d_i+b_i))=0,\ \rm{for}\ \mu\vdash(p+1).\]
Now since $M_i$ is $1$--regular with respect to $\mc{O}_{\bb{P}V_i}(1)$ (see \cite[Section I.8]{lazarsfeld} for definitions), it follows that $M_i^{\oo(p+1)}$ is $(p+1)$--regular, hence the same is true about $S_{\mu}M_i$ which is a direct summand in $M_i^{\oo(p+1)}$. If $p\leq (d_i+b_i)$ then $S_{\mu}M_i$ is also $(d_i+b_i+1)$--regular and the desired vanishing follows.
\end{proof}

\section{Representation stability}\label{sec:stability}

This section is based on \cites{church-farb,church-ellenberg-farb}. We adopt a slightly different strategy from \cite{church-ellenberg-farb} which is valid only in characteristic zero, but offers a quick access to stability for the problem at hand, namely for the stabilization of homology of packing complexes.

We denote by $Set$ the category of sets, where morphisms are injective maps. For a positive integer $n$, we let $Set^n$ denote the $n$--fold product of $Set$ with itself. We write $Vec$ for the category of finite dimensional vector spaces over $\K$.

\begin{definition}[$\FI^n$--modules \cite{church-ellenberg-farb}] We define an \defi{$\FI^n$--module} to be a functor $V:Set^n\to Vec$. A \defi{morphism of $\FI^n$--modules} is just a natural transformation $T:V\to W$. We will often refer to $V$ as an \defi{$\FI$--module} or simply a \defi{module}, when there's no danger of confusion.
\end{definition}

If $V$ is an $\FI^n$--module, and $\A=(A_1,\cdots,A_n)$ is an $n$--tuple, then $V_{\A}$ admits a natural action of the product of symmetric groups $\S_{\A}=\S_{A_1}\times\cdots\times\S_{A_n}$. We can then think of the $\FI^n$--module $V$ as a pair $(V,\phi)$ consisting of a collection $V=(V_{\N})_{\N}$ of finite dimensional $\S_{\N}$--representations $V_{\N}$, indexed by $n$--tuples $\N=(N_1,\cdots,N_n)$ of positive integers, equipped with maps $\phi_{\N,\N'}:V_{\N}\to V_{\N'}$ for all consecutive $n$--tuples $\N,\N'$. These maps have to be equivariant with respect to the $\S_{\N}$--action when we regard $\S_{\N}$ as a subgroup of $\S_{\N'}$ in the natural way, i.e. we can think of $\phi_{\N,\N'}$ as a $\S_{\N}$--equivariant map $V_{\N}\to\rm{Res}_{\S_{\N}}^{\S_{\N'}}(V_{\N'})$, or a $\S_{\N'}$--equivariant map $\rm{Ind}_{\S_{\N}}^{\S_{\N'}}(V_{\N})\to V_{\N'}$. A morphism $T$ between $V=(V,\phi)$ and $W=(W,\psi)$ is then a collection of $\S_{\N}$--equivariant maps $T_{\N}:V_{\N}\to W_{\N}$, satisfying $\psi_{\N,\N'}\circ T_{\N}=T_{\N'}\circ\phi_{\N,\N'}$.
 
By composing maps between consecutive $n$--tuples we get maps $\phi_{\N,\N'}$ whenever $\N\leq\N'$ (i.e. $N_i\leq N_i'$ for all $i$). As remarked in \cite[Lemma~2.1, Remark~2.2]{church-ellenberg-farb}, $(V,\phi)$ needs to satisfy a further compatibility relation: denoting by $[\N]$ the set $\{1,\cdots,N_1\}\times\cdots\times\{1,\cdots,N_n\}$, we must have that for every $\N\leq\N'$, $v\in V_{\N}$ and $v'=\phi_{\N,\N'}(v)$, and for every $\s^1,\s^2\in\S_{\N'}$ such that $\s^1|_{[\N]}=\s^2|_{[\N']}$, the equality $\s^1(v')=\s^2(v')$ holds. 

The following definition of stability is inspired by \cite[Definition~1.2]{church}.

\begin{definition}[Representation stability]\label{def:repstability} The $\FI^n$--module $V$ is called \defi{representation stable} if for all $n$--partitions $\ll$ and all $\N\gg 0$ (i.e. for sufficiently large values of the parameters $N_1,\cdots,N_n$), the natural map (induced by $\phi_{\N,\N'}$)
\[\phi_{\N,\N'}(\ll):\left(\rm{Ind}_{\S_{\N}}^{\S_{\N'}}\left((V_{\N})_{\ll[\N]}\right)\right)_{\ll[\N']}\lra (V_{\N'})_{\ll[\N']}\]
is an isomorphism for all $\N'\geq\N$. We will often refer to $V$ as a \defi{stable} module, for simplicity. We say that $V$ has \defi{injectivity range/surjectivity range/stable range $\N'\geq\N$} if the maps $\phi_{\N,\N'}(\ll)$ are injective/surjective/isomorphisms for all $\ll$ whenever $\N'\geq\N$.
\end{definition}

Note that for $\N\gg 0$ and $\N'\geq\N$, the above definition implies that for a stable module $V$ the maps $\phi_{\N,\N'}$ are injective, the image of $\phi_{\N,\N'}$ generates $V_{\N'}$ as a $\S_{\N'}$--representation, and moreover, the multiplicity of $\ll[\N]$ inside $V_{\N}$ is independent of $\N$ for every $n$--partition $\ll$. This means that $V$ satisfies \defi{uniform representation stability} in the sense of \cite[Definition~2.6]{church-farb}.

For $0\leq s\leq n$ and a subset $I=\{i_1,\cdots,i_s\}$ of $\{1,\cdots,n\}$, we consider a fixed collection of finite sets $A_{i_1},\cdots,A_{i_s}$. Given any $\FI^n$--module $V$, we can restrict it to an $\FI^{n-s}$--module $W$, by letting $W((A_j)_{j\notin I})=V(A_1,\cdots,A_n)$. We call $W$ a \defi{restriction (pull--back)} of $V$.

\begin{definition}[Representation superstability]\label{def:superstability} The $\FI^n$--module $V$ is called \defi{representation superstable} if all its restrictions are representation stable. We will often refer to $V$ as a \defi{superstable} module, for simplicity.
\end{definition}

\begin{remark}\label{rem:superstability} For any $n\geq 1$, it makes sense to talk about finitely generated $\FI^n$--modules in the sense of \cite{church-ellenberg-farb}, or about finitely generated $\GL_{\infty}^n$--equivariant $\Sym((\bb{C}^{\infty})^n)$--modules in the sense of \cite{sam-snowden}. It can be checked that (in characteristic zero) a module is finitely generated if and only if it is superstable. 
\end{remark}

\begin{remark}[$\FI$--spaces]\label{rem:fispace} In the terminology of \cite{church-ellenberg-farb}, the functor that assigns to a tuple $\A$ of sets the packing complex $\C_{\A}^{\d}$ is an \defi{$\FI$--space}. Applying the reduced homology functors $\H_i$ to this $\FI$--space yields $\FI$--modules that are superstable (see Theorem \ref{thm:stability}).
\end{remark}

\begin{lemma}\label{lem:subquotstable} If $V$ is representation (super)stable and $W$ is a sub-- or quotient module of $V$, then $W$ is also representation (super)stable. More generally, if $V$ has a finite filtration with quotients $W_i$, then all $W_i$ are (super)stable if $V$ is.
\end{lemma}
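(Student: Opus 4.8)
The plan is to reduce everything to the case of subrepresentations and quotient representations of a single $\S_{\N}$-representation, and then use semisimplicity (characteristic zero). First I would observe that it suffices to treat the case of a sub- or quotient $\FI^n$-module $W$ of $V$, since a finite filtration $0=V_0\subset V_1\subset\cdots\subset V_\ell=V$ presents each quotient $W_i=V_i/V_{i-1}$ as a quotient of the submodule $V_i$, and representation stability will have been established for $V_i$ (as a submodule of $V$) and then transferred to $W_i$ (as a quotient of $V_i$). So the lemma follows from two assertions: stability passes to submodules, and stability passes to quotient modules.

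The key point is that in characteristic zero, for each $n$-tuple $\N$, the $\S_{\N}$-representation $V_{\N}$ decomposes as a direct sum of its isotypic components, and taking the $\ll[\N]$-isotypic component $(\,\cdot\,)_{\ll[\N]}$ is an exact functor on $\S_{\N}$-representations; likewise $\rm{Ind}_{\S_{\N}}^{\S_{\N'}}$ is exact. Thus for a short exact sequence $0\to W_{\N}\to V_{\N}\to V_{\N}/W_{\N}\to 0$ of $\S_{\N}$-representations (compatible as $\N$ varies via the structure maps $\phi$), applying these functors yields a commutative diagram with exact rows
\[
\begin{array}{ccccccccc}
0 & \to & \left(\rm{Ind}_{\S_{\N}}^{\S_{\N'}}((W_{\N})_{\ll[\N]})\right)_{\ll[\N']} & \to & \left(\rm{Ind}_{\S_{\N}}^{\S_{\N'}}((V_{\N})_{\ll[\N]})\right)_{\ll[\N']} & \to & \left(\rm{Ind}_{\S_{\N}}^{\S_{\N'}}((V_{\N}/W_{\N})_{\ll[\N]})\right)_{\ll[\N']} & \to & 0 \\
 & & \downarrow & & \downarrow & & \downarrow & & \\
0 & \to & (W_{\N'})_{\ll[\N']} & \to & (V_{\N'})_{\ll[\N']} & \to & (V_{\N'}/W_{\N'})_{\ll[\N']} & \to & 0
\end{array}
\]
where the vertical maps are $\phi_{\N,\N'}(\ll)$ for $W$, $V$, and $V/W$ respectively, and the middle vertical map is an isomorphism for $\N\gg 0$, $\N'\geq\N$, by the (super)stability of $V$. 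A diagram chase (five lemma, or just the snake lemma) then forces the outer two vertical maps to be isomorphisms in the same range: injectivity of the middle map gives injectivity of the left map, and surjectivity of the middle map combined with injectivity of the left map gives that the right map is an isomorphism. Hence $W$ and $V/W$ are representation stable with the same stable range as $V$. For superstability, I would apply the identical argument after first restricting to an $\FI^{n-s}$-module: a restriction of $W$ (resp. of a quotient of $V$) is a submodule (resp. quotient) of the corresponding restriction of $V$, which is stable by hypothesis, so the submodule/quotient statement applies verbatim.

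I do not expect a genuine obstacle here; the content is entirely formal once one records that the relevant functors — isotypic projection and induction — are exact over a field of characteristic zero. The only mild subtlety is bookkeeping: one must check that a sub- or quotient $\FI^n$-module really does give, for each pair of consecutive tuples, a commuting square of $\S$-equivariant maps compatible with the functors above, so that the diagram really commutes; this is immediate from the definition of a morphism of $\FI^n$-modules. One should also note that the short exact sequences of $\S_{\N}$-representations split, but non-canonically and not compatibly in $\N$, which is exactly why the diagram-chase argument (rather than a naive splitting argument) is the right tool.
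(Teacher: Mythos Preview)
Your diagram chase is not quite right. With the middle vertical map an isomorphism, the snake lemma only yields that the left vertical map is injective and the right vertical map is surjective, together with $\ker(\mathrm{right})\cong\mathrm{coker}(\mathrm{left})$; it does \emph{not} force either outer map to be an isomorphism. (Take top row $0\to 0\to A\to A\to 0$, bottom row $0\to A\to A\to 0\to 0$, middle map the identity.) Consequently your claim that $W$ and $V/W$ are stable \emph{with the same stable range as $V$} is too strong, and indeed false: for $n=1$, let $V_N$ be the trivial $\S_N$-representation for every $N\geq 0$, and let $W_N=0$ for $N\leq 5$ and $W_N=V_N$ for $N\geq 6$; this is a genuine sub-$\FI$-module of $V$, but its stable range begins only at $N\geq 6$.

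The missing step is a finiteness argument. From your diagram you do correctly extract that $\phi_{\N,\N'}(\ll)$ for $W$ is injective (and for $V/W$ surjective) once $\N$ lies in the stable range of $V$. Hence the multiplicity of $[\ll[\N]]$ in $W_{\N}$ is nondecreasing in $\N$; since it is bounded above by the (eventually constant) multiplicity in $V_{\N}$, it must eventually stabilize, at which point the injective map between isotypic components of equal dimension becomes an isomorphism. The dual argument (nonincreasing, bounded below by $0$) handles $V/W$. This is precisely the paper's proof. Your reduction of the filtration statement to the sub/quotient case, and your treatment of superstability by restriction, are both fine.
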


\begin{proof} The superstable case is a consequence of the stable case, so we only deal with the latter. Since $V$ is a stable $\FI^n$--module, there are finitely many $n$--partitions $\ll$ such that $\ll[\N]$ appears in $V_{\N}$ for $\N\gg 0$ and moreover, the multiplicity $m_{\ll}(V_{\N})$ of $\ll[\N]$ in $V_{\N}$ is constant for $\N\gg 0$. If $W$ is a sub-- (resp. quotient) module of $V$, then for each such $\ll$ the induced maps $\phi_{\N,\N'}(\ll)|_W$ are injective (resp. surjective) for $\N'\geq\N\gg 0$, so the multiplicities $m_{\ll}(W_{\N})$ are eventually nondecreasing (resp. nonincreasing), hence they stabilize and therefore $\phi_{\N,\N'}(\ll)|_W$ are eventually bijective. The last statement follows by an easy induction.
\end{proof}
  
\begin{corollary}\label{cor:imkerstable} If $V,W$ are representation (super)stable, and $T:V\to W$ is a morphism then $\rm{Im}(T)$ and $\rm{Ker}(T)$ are also representation (super)stable.
\end{corollary}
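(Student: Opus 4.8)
The plan is to observe that $\mathrm{Ker}(T)$ and $\mathrm{Im}(T)$ are naturally sub--$\FI^n$--modules of $V$ and $W$ respectively, and then invoke Lemma~\ref{lem:subquotstable}. So this corollary is really just the repackaging of that lemma in the language of morphisms.

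First I would make the $\FI^n$--module structures precise. Writing $V=(V,\phi)$, $W=(W,\psi)$, and $T=(T_{\N})_{\N}$ for the defining data, I would set $\mathrm{Ker}(T)_{\N}=\ker(T_{\N})\subseteq V_{\N}$ and $\mathrm{Im}(T)_{\N}=T_{\N}(V_{\N})\subseteq W_{\N}$. Each $T_{\N}$ is $\S_{\N}$--equivariant, so $\ker(T_{\N})$ and $T_{\N}(V_{\N})$ are $\S_{\N}$--subrepresentations. To see these collections are closed under the structure maps, I would use the identity $\psi_{\N,\N'}\circ T_{\N}=T_{\N'}\circ\phi_{\N,\N'}$: if $v\in\ker(T_{\N})$, then $T_{\N'}(\phi_{\N,\N'}(v))=\psi_{\N,\N'}(T_{\N}(v))=0$, so $\phi_{\N,\N'}$ restricts to a map $\mathrm{Ker}(T)_{\N}\to\mathrm{Ker}(T)_{\N'}$; and if $w=T_{\N}(v)\in\mathrm{Im}(T)_{\N}$, then $\psi_{\N,\N'}(w)=T_{\N'}(\phi_{\N,\N'}(v))\in\mathrm{Im}(T)_{\N'}$. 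The further compatibility relation from \cite[Lemma~2.1, Remark~2.2]{church-ellenberg-farb} is automatically inherited, since it is an equation that already holds for all elements of $V_{\N'}$ and of $W_{\N'}$. Hence $\mathrm{Ker}(T)$ is a submodule of $V$ and $\mathrm{Im}(T)$ is a submodule of $W$.

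It then remains only to apply Lemma~\ref{lem:subquotstable}: since $V$ is representation (super)stable, so is its submodule $\mathrm{Ker}(T)$; since $W$ is representation (super)stable, so is its submodule $\mathrm{Im}(T)$. (One could add that $\mathrm{Coker}(T)=W/\mathrm{Im}(T)$ is likewise (super)stable, being a quotient module of $W$.)

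I do not expect any genuine obstacle here: all the real work is contained in Lemma~\ref{lem:subquotstable}, whose proof already treats sub-- and quotient modules via the monotonicity of multiplicities, and the only thing to verify for the corollary is the essentially formal fact that kernels and images of $\FI^n$--module morphisms are again $\FI^n$--modules.
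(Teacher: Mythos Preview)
Your argument is correct and matches the paper's intended approach: the corollary is stated immediately after Lemma~\ref{lem:subquotstable} with no proof, so it is meant as the direct consequence you describe, namely that $\mathrm{Ker}(T)\subset V$ and $\mathrm{Im}(T)\subset W$ are submodules and hence (super)stable by that lemma. Your verification that kernels and images are genuine $\FI^n$--submodules is the only content needed, and it is handled correctly.
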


\begin{lemma}\label{lem:trianglestable} Given an ``exact triangle'' $X_{\bullet}\to Y_{\bullet}\to Z_{\bullet}\to X_{\bullet}[-1]$, i.e. an exact sequence
\[\cdots\to X_k\to Y_k\to Z_k\to X_{k-1}\to Y_{k-1}\to\cdots\]
with $X_k$, $Y_k$ representation (super)stable for all $k$, then $Z_k$ is also representation (super)stable for every $k$. If $X_k,Y_k,X_{k-1},Y_{k-1}$ have stable range $\N'\geq\N$, then $Z_k$ also has stable range $\N'\geq\N$.

In particular, if
\[0\to A\to B\to C\to 0\]
is a short exact sequence of $\FI^n$--modules, and if any two of $A,B,C$ are (super)stable, then the same is true about the third. If $B$ has stable range $\N'\geq\N$ then $A$ has injectivity range $\N'\geq\N$ and $C$ has surjectivity range $\N'\geq\N$. If any two of $A,B,C$ have stable range $\N'\geq\N$ then the same is true about the third.
\end{lemma}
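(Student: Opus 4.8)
The plan is to reduce everything to the short exact sequence case, and then to reduce the short exact sequence case to the statement of Lemma~\ref{lem:subquotstable} together with Corollary~\ref{cor:imkerstable}. First I would handle the exact triangle: an exact sequence $\cdots\to X_k\to Y_k\to Z_k\to X_{k-1}\to Y_{k-1}\to\cdots$ of $\FI^n$--modules breaks into short exact sequences
\[0\to \operatorname{Im}(X_k\to Y_k)\to Y_k\to \operatorname{Im}(Y_k\to Z_k)\to 0\]
and
\[0\to \operatorname{Im}(Y_k\to Z_k)\to Z_k\to \operatorname{Im}(Z_k\to X_{k-1})\to 0,\]
where $\operatorname{Im}(Z_k\to X_{k-1})=\operatorname{Ker}(X_{k-1}\to Y_{k-1})$. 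By Corollary~\ref{cor:imkerstable}, since $X_k,Y_k$ are (super)stable, $\operatorname{Im}(X_k\to Y_k)$ is (super)stable; then the first short exact sequence and the short exact sequence case (applied to the pair $\{$middle term, left term$\}$, say) give that $\operatorname{Im}(Y_k\to Z_k)$ is (super)stable. Similarly $\operatorname{Ker}(X_{k-1}\to Y_{k-1})$ is (super)stable by Corollary~\ref{cor:imkerstable}. Feeding both of these into the second short exact sequence yields that $Z_k$ is (super)stable. The quantitative stable-range assertion for the triangle follows by tracking ranges through these same two short exact sequences, once the quantitative short-exact-sequence statement is established.

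So it remains to prove the short exact sequence statement $0\to A\to B\to C\to 0$. Here $A=\operatorname{Ker}(B\to C)$ is a submodule of $B$ and $C\cong B/A$ is a quotient of $B$, so if $B$ is (super)stable then $A$ and $C$ are (super)stable by Lemma~\ref{lem:subquotstable}; this also gives the range statement, since the proof of Lemma~\ref{lem:subquotstable} shows that a submodule of a module with stable range $\N'\geq\N$ has injectivity range $\N'\geq\N$ and a quotient has surjectivity range $\N'\geq\N$. (If one wants the stronger claim that $A$ already has full stable range $\N'\geq\N$, not merely injectivity range, that requires an additional argument — but the statement as written only asserts injectivity/surjectivity ranges in this direction, so Lemma~\ref{lem:subquotstable} suffices.) For the remaining two cases I would argue as follows. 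If $A$ and $C$ are (super)stable: in characteristic zero every $\S_\N$--module is semisimple, so the surjection $B_\N\to C_\N$ splits $\S_\N$--equivariantly, giving $B_\N\cong A_\N\oplus C_\N$ as $\S_\N$--modules; hence $m_{\ll[\N]}(B_\N)=m_{\ll[\N]}(A_\N)+m_{\ll[\N]}(C_\N)$, which is eventually constant, and the maps $\phi_{\N,\N'}(\ll)$ for $B$ are injective (because $A$ and $C$ inject, and by the long-exact-sequence / snake-lemma comparison $B$ does too) and surjective (same reasoning), hence isomorphisms for $\N\gg 0$; the range version follows by the same comparison of the maps $\phi_{\N,\N'}(\ll)$ for $A$, $B$, $C$. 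If $A$ and $B$ are (super)stable: then $C$ is (super)stable by the triangle case already proved (or directly by Lemma~\ref{lem:subquotstable}, as $C$ is a quotient of $B$), and the range statement again follows by comparing the three families of maps.

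The only genuine subtlety — the step I expect to be the main obstacle — is making precise the comparison of the maps $\phi_{\N,\N'}(\ll)$ across the short exact sequence and extracting the sharp ranges. The point is that $\ll[\N]$--multiplicity spaces and the induction/restriction operations $\operatorname{Ind}_{\S_\N}^{\S_{\N'}}(-)_{\ll[\N']}$ are exact functors in characteristic zero (they are compositions of the exact functors $\operatorname{Ind}$, $\operatorname{Res}$, and "take $\mu$--isotypic component", the last being exact because $K[\S_\N]$ is semisimple). Therefore applying this exact functor to $0\to A\to B\to C\to 0$ produces a short exact sequence relating the sources of $\phi_{\N,\N'}(\ll)$ for $A$, $B$, $C$, and likewise applying $(-)_{\ll[\N']}$ to the sequence at level $\N'$ relates the targets; the three maps $\phi_{\N,\N'}(\ll)$ then fit into a commuting ladder with exact rows, and a diagram chase (five-lemma style) delivers injectivity/surjectivity/bijectivity of the third map from that of the other two, with the ranges as stated. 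Once this exactness is set up cleanly, the rest is bookkeeping, and the general "exact triangle" case is then just iterated application of the three-term case as sketched above.
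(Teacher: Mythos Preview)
Your proposal is correct and is essentially an unpacking of the paper's one-line proof, which reads in its entirety: ``Follows from the $5$--lemma.'' The key observation you isolate --- that in characteristic zero the functors $(-)_{\ll[\N]}$, $\rm{Ind}_{\S_{\N}}^{\S_{\N'}}$, and $(-)_{\ll[\N']}$ are all exact, so that applying them to the exact sequence yields a commuting ladder with exact rows to which a five-lemma argument applies --- is exactly what is meant. Your detour through splitting the long exact sequence into short exact sequences and invoking Lemma~\ref{lem:subquotstable} and Corollary~\ref{cor:imkerstable} is valid (your range-tracking for $\rm{Im}(X_k\to Y_k)$ as simultaneously a quotient of $X_k$ and a submodule of $Y_k$ is correct), but it is slightly more economical to apply the $5$--lemma directly to the five-term segment $X_k\to Y_k\to Z_k\to X_{k-1}\to Y_{k-1}$ after applying the exact functors, which immediately gives the stable-range statement for $Z_k$ from that of its four neighbors without passing through the intermediate images.
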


\begin{proof} Follows from the $5$--lemma.
\end{proof}

We say that an $\FI^n$--module $V$ is \defi{trivial} if $V_{\N}=0$ for $\N\gg 0$. It is \defi{supertrivial} if $V_{\N}=0$ except maybe for finitely many tuples $\N$. We note that a (super)trivial module is (super)stable. For the purpose of stability, it will be convenient to identify modules that coincide for sufficiently large multidegrees. More precisely, we say that $V$ and $W$ are \defi{equivalent} if there exist trivial submodules $V^0\subset V$, $W^0\subset W$, and an isomorphism between $V/V^0$ and $W/W^0$. We say that $V$ is \defi{simple} if it is trivial, or if it is equivalent to $W$ for every nontrivial submodule $W$ of $V$.

We denote by $V(\ll)$ the $\FI^n$--module where $V(\ll)_{\N}=[\ll[\N]]$ for all $\N$ for which $\ll[\N]$ is defined and $V(\ll)_{\N}=0$ otherwise, and for a successor $\N'$ of $\N$, the map $\phi_{\N,\N'}$ is zero when $V(\ll)_{\N}=0$, and otherwise it is the unique (up to scaling) nonzero $\S_{\N}$--equivariant map from $[\ll[\N]]$ to $[\ll[\N']]$. It is clear that $V(\ll)$ is simple and stable (in fact it is even superstable), and the next lemma shows that every simple stable module is equivalent to $V(\ll)$ for some $\ll$.

\begin{lemma}\label{lem:simplestable} If $V$ if a stable nontrivial module, then $V$ contains a submodule equivalent to $V(\ll)$ for some $n$--partition $\ll$. 
\end{lemma}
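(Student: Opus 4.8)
The plan is to extract from $V$ a single "stable isotypic row" and recognize it as a copy of $V(\ll)$. Since $V$ is stable and nontrivial, there are only finitely many $n$--partitions $\ll$ such that $[\ll[\N]]$ appears in $V_{\N}$ for $\N\gg 0$, and each corresponding multiplicity $m_{\ll}(V_{\N})$ is eventually constant and positive for at least one such $\ll$; fix one such $\ll$. The strategy is then to build a submodule $W\subseteq V$ with $W_{\N}\cong [\ll[\N]]$ for $\N\gg 0$. First I would work one multidegree at a time: for $\N\gg 0$ pick inside $V_{\N}$ an irreducible $\S_{\N}$--subrepresentation $W_{\N}\cong[\ll[\N]]$. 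The point is to choose these compatibly, so that $\phi_{\N,\N'}(W_{\N})\subseteq W_{\N'}$ for consecutive (hence all) $\N\le\N'$ in the stable range.

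For the compatibility, I would use the stability isomorphisms $\phi_{\N,\N'}(\ll)$ directly. Start with one $\N_0$ large enough to lie in the stable range, and fix any irreducible $W_{\N_0}\cong[\ll[\N_0]]$ inside the $\ll[\N_0]$--isotypic component of $V_{\N_0}$. For a successor $\N_1$ of $\N_0$, the map $\phi_{\N_0,\N_1}$ sends $W_{\N_0}$ into the $\ll[\N_1]$--isotypic component of $V_{\N_1}$ (by the compatibility relation recalled after Definition of $\FI^n$--modules, together with the fact that stability forces $\phi$ to be nonzero on the $\ll[\N_0]$--part): indeed, since $\phi_{\N_0,\N_1}(\ll)$ is an isomorphism, the composite $\rm{Ind}_{\S_{\N_0}}^{\S_{\N_1}}(W_{\N_0})\to (V_{\N_1})_{\ll[\N_1]}$ is nonzero, and because the image of an $\S_{\N_0}$--equivariant map out of an induced module lands in the $\ll[\N_1]$--isotypic part, the image $\phi_{\N_0,\N_1}(W_{\N_0})$ is a nonzero $\S_{\N_0}$--subrepresentation isomorphic to a quotient of $\rm{Res}_{\S_{\N_0}}^{\S_{\N_1}}([\ll[\N_1]])$; by branching, $[\ll[\N_0]]$ occurs in that restriction with multiplicity one, so $\phi_{\N_0,\N_1}(W_{\N_0})$ generates (over $\S_{\N_1}$) a subrepresentation of $V_{\N_1}$ which is a quotient of $[\ll[\N_1]]$, hence isomorphic to $[\ll[\N_1]]$ by irreducibility. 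Define $W_{\N_1}$ to be this subrepresentation. Iterating along any chain of successors from $\N_0$ defines $W_{\N}$ for all $\N\ge\N_0$; one must check this is independent of the chain, which follows because $[\ll[\N']]$ occurs with multiplicity one in $\rm{Res}([\ll[\N'']])$ for any $\N'\le\N''$ (so the generated subrepresentation is forced), and because the maps $\phi$ compose coherently. Setting $W_{\N}=0$ for $\N\not\ge\N_0$ gives a sub--$\FI^n$--module $W$ of $V$ with $W_{\N}\cong[\ll[\N]]$ for $\N\gg 0$.

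Finally I would identify $W$ with $V(\ll)$ up to equivalence. On the one hand $W_{\N}=0$ for finitely many $\N$ (those not $\ge\N_0$ plus those where $\ll[\N]$ is undefined is a subset of these) — actually $W$ may fail to equal $V(\ll)$ only on finitely many low multidegrees, so $W$ and $V(\ll)$ each have a trivial submodule (supported on those finitely many $\N$, but here "trivial" means vanishing for $\N\gg 0$, which holds) after which they agree: the quotients $W/W^0$ and $V(\ll)/V(\ll)^0$ are isomorphic because in each multidegree $\N\gg 0$ both are $[\ll[\N]]$ and the transition maps $\phi_{\N,\N'}$ are the unique-up-to-scalar nonzero $\S_{\N}$--equivariant maps $[\ll[\N]]\to[\ll[\N']]$ (nonzero by the construction of $W$, unique by Schur's lemma and multiplicity one in the branching). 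Rescaling the basis vectors of the one-dimensional spaces of such maps in each multidegree matches the two systems of transition maps, giving the required isomorphism $W/W^0\cong V(\ll)/V(\ll)^0$. Hence $V$ contains a submodule equivalent to $V(\ll)$, as claimed.

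The main obstacle I anticipate is the coherence/well-definedness of the $W_{\N}$ along different chains of successors: this is where one genuinely needs the multiplicity-one feature of the branching rule $[\ll[\N']]\downarrow$, so that the subrepresentation "generated by the image" is canonical and the square diagrams of transition maps commute up to scalars that can be absorbed. Everything else — nonvanishing of $\phi$ on the chosen isotypic component, and the final matching of transition maps — is a routine consequence of stability together with Schur's lemma.
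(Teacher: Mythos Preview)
Your argument has a genuine gap at the step where you claim that the $\S_{\N_1}$--subrepresentation generated by $\phi_{\N_0,\N_1}(W_{\N_0})$ ``is a quotient of $[\ll[\N_1]]$, hence isomorphic to $[\ll[\N_1]]$ by irreducibility.'' This is false in general: the subrepresentation generated by $\phi_{\N_0,\N_1}(W_{\N_0})$ is a quotient of $\rm{Ind}_{\S_{\N_0}}^{\S_{\N_1}}([\ll[\N_0]])$, not of $[\ll[\N_1]]$, and by Pieri that induced module has several irreducible constituents. So the image can leak into isotypic components $(V_{\N_1})_{\tilde\ll[\N_1]}$ for other $\tilde\ll\in\mc{P}$, and then your $W_{\N_1}$ is not a single copy of $[\ll[\N_1]]$. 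The multiplicity--one statement you invoke (that $[\ll[\N_0]]$ appears once in $\rm{Res}([\ll[\N_1]])$) controls maps \emph{into} $[\ll[\N_1]]$, not maps out of $\rm{Ind}([\ll[\N_0]])$ into all of $V_{\N_1}$. The same issue propagates to your coherence argument along different chains.

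The paper avoids this by not choosing an arbitrary $\ll$ but a \emph{size maximal} one in $\mc{P}$ (Definition~\ref{def:sizemaximal}). For such $\ll$, Lemma~\ref{lem:sizemaximal} (a direct consequence of Pieri) says that $[\ll[\N]]$ never occurs in $\rm{Res}_{\S_{\N}}^{\S_{\N'}}([\tilde\ll[\N']])$ for any other $\tilde\ll\in\mc{P}$. Hence there are no nonzero $\S_{\N}$--maps from $(V_{\N})_{\ll[\N]}$ to $(V_{\N'})_{\tilde\ll[\N']}$, so the full isotypic component $W_{\N}=(V_{\N})_{\ll[\N]}$ is already a sub--$\FI^n$--module of $V$. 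Inside that, picking one irreducible copy at a single $\N^0$ and pushing forward by $\phi_{\N^0,\N}(\ll)$ gives the desired $U$ equivalent to $V(\ll)$, with no coherence issues to resolve. Your overall strategy is the right one, but it only goes through once you impose size maximality on $\ll$.
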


\begin{remark}\label{rem:filtrationstable} An easy induction argument combined with the above lemma shows that every stable module $V$ has a finite filtration (a \defi{composition series}) whose quotients are simple modules equivalent to $V(\ll)$ for $\ll$ in some finite collection $\mc{P}$ of $n$--partitions. We call each $\ll\in\mc{P}$ a \defi{constituent} of $V$. For each such $\ll$, we denote by $m_{\ll}$ the number of occurrences of (a module equivalent to) $V(\ll)$ in a composition series for $V$. We call $m_{\ll}$ the multiplicity of the constituent $\ll$. The constituents and their multiplicities are characterized by the decomposition
\[V_{\N}=\bigoplus_{\ll\in\mc{P}} [\ll[\N]]^{\oplus m_{\ll}}\ \rm{for}\ \N\gg 0.\]
\end{remark}

\begin{definition}\label{def:sizemaximal}
Given a collection $\mc{P}$ of $n$--partitions, we say that $\ll\in\mc{P}$ is \defi{size maximal} if for any $\tilde{\ll}\in\mc{P}$, we either have $|\ll^i|=|\tilde{\ll}^i|$ for all $i=1,\cdots,n$, or $|\ll^i|>|\tilde{\ll}^i|$ for some $i$. 
\end{definition}

\begin{lemma}\label{lem:sizemaximal} If $\ll\in\mc{P}$ is size maximal, and $\N,\N'$ are consecutive $n$--tuples, then for every $\tilde{\ll}\in\mc{P}$ different from $\ll$, $\ll[\N]$ does not occur in $\rm{Res}_{\S_{\N}}^{\S_{\N'}}\left(\tilde{\ll}[\N']\right)$. 
\end{lemma}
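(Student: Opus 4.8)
The plan is to reduce the statement to the branching rule for restriction of irreducible symmetric group representations from $\S_{\N'}$ to $\S_{\N}$, applied factor by factor. Since $\N$ and $\N'$ are consecutive, they differ in exactly one coordinate, say $N_k' = N_k + 1$ and $N_j' = N_j$ for $j \neq k$; thus $\S_{\N} \subset \S_{\N'}$ only differs in the $k$-th factor, where $\S_{N_k} \subset \S_{N_k+1}$. Consequently
\[\rm{Res}_{\S_{\N}}^{\S_{\N'}}\bigl(\tilde{\ll}[\N']\bigr) = [\tilde{\ll}^1[N_1]] \o \cdots \o \rm{Res}_{\S_{N_k}}^{\S_{N_k+1}}\bigl([\tilde{\ll}^k[N_k']]\bigr) \o \cdots \o [\tilde{\ll}^n[N_n]],\]
and by the classical branching rule the restriction in the $k$-th slot is the sum of $[\mu]$ over all partitions $\mu \vdash N_k$ obtained from $\tilde{\ll}^k[N_k']$ by removing one box. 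First I would observe that any such $\mu$ has $|\mu| - \mu_1 \leq |\tilde{\ll}^k[N_k']| - (\tilde{\ll}^k[N_k'])_1 = |\tilde{\ll}^k|$ — removing a box from a Young diagram can only decrease (or keep equal) the quantity "number of boxes outside the first row". Hence every constituent $\mu$ of the $k$-th restriction factor, when written as $\nu[N_k]$ for a partition $\nu$ (which is possible precisely when $\mu_1$ is at least the rest, i.e. for $N_k \gg 0$), satisfies $|\nu| \leq |\tilde{\ll}^k|$. The other factors are unchanged, so if $\ll[\N]$ occurs in $\rm{Res}_{\S_{\N}}^{\S_{\N'}}(\tilde{\ll}[\N'])$ then necessarily $|\ll^k| \leq |\tilde{\ll}^k|$ while $|\ll^j| = |\tilde{\ll}^j|$ for all $j \neq k$.

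Now I invoke size-maximality of $\ll$: comparing $\ll$ with $\tilde{\ll}$, Definition \ref{def:sizemaximal} forces either $|\ll^i| = |\tilde{\ll}^i|$ for all $i$, or $|\ll^i| > |\tilde{\ll}^i|$ for some $i$. The second alternative is incompatible with what we just derived (we found $|\ll^i| \leq |\tilde{\ll}^i|$ for every $i$), so we must be in the first case: $|\ll^i| = |\tilde{\ll}^i|$ for all $i$. But then in the $k$-th slot we need $\ll^k[N_k]$ to arise from $\tilde{\ll}^k[N_k']$ by removing one box, and by the computation above the only way to remove a box from $\tilde{\ll}^k[N_k']$ without strictly decreasing the "size" $|\cdot|$ of the associated partition is to remove a box from the first row; this forces $\ll^k = \tilde{\ll}^k$. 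Combined with $\ll^j = \tilde{\ll}^j$ for $j \neq k$ (from the branching rule in the untouched slots, where $\tilde{\ll}^j[N_j]$ must literally equal $\ll^j[N_j]$), we conclude $\ll = \tilde{\ll}$, contradicting the hypothesis that $\tilde{\ll} \neq \ll$.

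The only genuinely delicate point is bookkeeping the normalization $\delta \mapsto \delta[m]$: one must be careful that $\tilde{\ll}^k[N_k']$ is a well-defined partition (which it is, as it is a constituent of $\tilde{\ll}$ in a nonzero degree), and that removing a box from it and re-expressing the result in the form $\nu[N_k]$ behaves as claimed — in particular that removing a first-row box of $\delta[m]$ yields $\delta[m-1]$ whenever the latter is defined. I expect this to be routine: it is immediate from the definition $\delta[m] = (m - |\delta|, \delta_1, \delta_2, \dots)$, since decreasing the first part by one gives $(m-1-|\delta|, \delta_1, \delta_2, \dots) = \delta[m-1]$, while removing a box from any lower row of $\delta[m]$ changes $\delta$ itself and strictly decreases $|\delta|$, hence strictly decreases the size in that slot. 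So the main obstacle is purely notational rather than mathematical, and the heart of the argument is the elementary monotonicity of $|\mu| - \mu_1$ under removing a box together with the definition of size-maximality.
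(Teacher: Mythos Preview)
Your argument is correct and is precisely the unpacking of the paper's one-line proof, which reads in its entirety ``This follows from Pieri's rule.'' Your branching-rule computation in the $k$-th factor, together with the size-maximality dichotomy, is exactly what that sentence abbreviates; the only quibble is that your parenthetical worry about writing $\mu$ as $\nu[N_k]$ is unnecessary, since any partition $\mu\vdash N_k$ automatically has the form $\nu[N_k]$ with $\nu=(\mu_2,\mu_3,\dots)$.
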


\begin{proof} This follows from Pieri's rule. 
\end{proof}

\begin{proof}[Proof of Lemma \ref{lem:simplestable}] Throughout the proof of this lemma we will assume that $\N\gg 0$. There is a finite set $\mc{P}$ of $n$--partitions $\ll$ such that $\ll[\N]$ occurs in $V_{\N}$, and for each such $\ll$, the multiplicity of $\ll[\N]$ in $V_{\N}$ is $m_{\ll}$, independent of $\N$.

We fix now a size maximal $\ll\in\mc{P}$. It follows from Lemma \ref{lem:sizemaximal} that if $\N,\N'$ are consecutive $n$--tuples, then there are no nonzero $\S_{\N}$--equivariant maps between $(V_{\N})_{\ll[\N]}$ and $(V_{\N'})_{\tilde{\ll}[\N']}$ when $\tilde{\ll}\neq\ll$. Letting $W_{\N}=(V_{\N})_{\ll[\N]}$ for all $\N$ yields a (stable) submodule $W$ of $V$. We fix $\N^0\gg 0$ and define $U$ by letting $U_{\N^0}$ be a subrepresentation of $W_{\N^0}$ isomorphic to $\ll[\N^0]$, and letting $U_{\N}$ be the image of $U_{\N^0}$ via $\phi_{\N^0,\N}(\ll)$ when $\N\geq\N^0$ (and $U_{\N}=0$ otherwise). It is clear that $U$ is a submodule of $V$ equivalent to $V(\ll)$. 
\end{proof}

\begin{definition}[External tensor product of $\FI$--modules]\label{def:tensorproduct} For an $\FI^n$--module $V$ and an $\FI^m$--module $W$, we let $V\bt W$ denote their \defi{external tensor product}, defined by 
\[(V\bt W)_{N_1,\cdots,N_n,M_1,\cdots,M_m}=V_{N_1,\cdots,N_n}\o W_{M_1,\cdots,M_m},\]
with the natural induced maps. If $V$ and $W$ are (super)stable, then the same is true about $V\bt W$. Note that if $\ll$ is an $n$--partition, then the $\FI^n$--module $V(\ll)=V(\ll^1)\bt\cdots\bt V(\ll^n)$ is an external tensor product of $\FI^1$--modules.
\end{definition}

\begin{definition}[Convolution of $\FI$--modules]\label{def:convolution} Given two $\FI^n$--modules $V,W$, we define their \defi{convolution} $V*W$ by
\[(V*W)_{\N}=\bigoplus_{\N^1+\N^2=\N} \rm{Ind}_{\S_{\N^1}\times\S_{\N^2}}^{\S_{\N}}(V_{\N^1}\bt W_{\N^2}),\]
with the natural induced maps. In the functor notation, if $\ul{A}=(A_1,\cdots,A_n)$ denotes an $n$--tuple of sets, and if we write $\ul{A}=\ul{B}\sqcup\ul{C}$ to signify $A_i=B_i\sqcup C_i$ for all $i$, then
\[(V*W)_{\ul{A}}=\bigoplus_{\ul{B}\sqcup\ul{C}} V_{\ul{B}}\o W_{\ul{C}}.\]
\end{definition}
\noindent Note that tensor products and convolutions preserve exactness, and that they are associative.

Given an $n$--partition $\mu\vdash^n\ul{a}=(a_1,\cdots,a_n)$, we write $T(\mu)$ for the supertrivial $\FI^n$--module having $T(\mu)_{\ul{a}}=[\mu]$, and $T(\mu)_{\N}=0$ for all $\N\neq\ul{a}$. For general (super)stable modules $V,W$, it is not the case that $V*W$ is also stable. However, we will see in Theorem \ref{thm:convstable} below that convolution with modules of the form $T(\mu)$ (or more general supertrivial modules) preserves stability. If $V$ is any $\FI^n$--module then
\[(V*T(\mu))_{\N}=
\begin{cases}
\rm{Ind}_{\S_{\N-\ul{a}}\times\S_{\ul{a}}}^{\S_{\N}}(V_{\N-\ul{a}}\bt [\mu]) & \rm{if}\ \N\geq\ul{a} \\
0 & \rm{otherwise}
\end{cases}.
\]

When $V=V(\1)$ is the $\FI^n$--module corresponding to the empty partition ($V_{\N}=\1_{\S_{\N}}$ for all $\N$), $V*T(\mu)$ coincides with the multivariate analogue of the module $M(\mu)$ introduced in \cite{church-ellenberg-farb}. An important part of the theory of finitely generated $\FI$--modules that Church--Ellenberg--Farb develop is based on the fact that the modules $M(\mu)$ are finitely generated which is proved in \cite[Theorem~2.8]{church}. We formulate the following consequence/generalization of this theorem

\begin{theorem}[{\cite[Theorem~2.8]{church}}]\label{thm:convstable} If $V$ is a representation (super)stable $\FI^n$--module and $T$ is a supertrivial $\FI^n$--module, then the convolution $V*T$ is representation (super)stable. Moreover, if $V$ has stable range $\N'\geq\N$, and $\ul{a}=(a_1,\cdots,a_n)$ is such that $T_{\ul{a}'}=0$ for $\ul{a}'>\ul{a}$, then $V*T$ has stable range $\N'\geq \N+2\cdot\ul{a}$.
\end{theorem}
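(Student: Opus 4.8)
The plan is to reduce the statement, through a chain of formal manipulations, to the single case in which everything is already known, namely the modules $V(\1)*T(\mu)$ (the analogues of the $M(\mu)$ of \cite{church-ellenberg-farb}), for which both representation stability and an explicit stable range, linear in the data of $\mu$, are established in \cite[Theorem~2.8]{church}. The ingredients are three properties of convolution, all immediate from the definitions: it is an exact functor; it commutes with external tensor products, $(V_1\bt V_2)*(W_1\bt W_2)\cong(V_1*W_1)\bt(V_2*W_2)$; and the convolution of a trivial module with a supertrivial one is trivial. I would also record the elementary remark that a trivial submodule of a module with stable range $\N'\geq\N$ vanishes in every degree $\geq\N$ (the structure maps are injective there), so that passing to an equivalent module does not change the stable range once one is inside it; this is what makes the range bookkeeping below go through.

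\emph{Step 1: superstable from stable, and reducing $T$ to $T(\mu)$.} A restriction of $V*T$ along a fixed tuple of sets decomposes, according to how each fixed set is distributed between the two sides of the convolution, as a finite direct sum of convolutions of a restriction of $V$ with a (still supertrivial) restriction of $T$; hence the superstable case follows from the stable case applied to each summand, and from now on $V$ is stable with stable range $\N'\geq\N$. A supertrivial module has finite total dimension, hence a finite composition series whose subquotients are simple supertrivial modules, and these are exactly the $T(\mu)$ with $\mu\vdash^n\ul c$ for tuples $\ul c$ in the support of $T$, so $\ul c\leq\ul a$. Applying the exact functor $V*(-)$ turns this into a finite filtration of $V*T$ with successive quotients $V*T(\mu)$. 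By Lemma \ref{lem:trianglestable} it suffices to show each $V*T(\mu)$ is stable with stable range $\N'\geq\N+2\ul c$; since $\ul c\leq\ul a$ this gives stable range $\N'\geq\N+2\ul a$, and reassembling the filtration (again via Lemma \ref{lem:trianglestable}) yields the theorem. So fix $T=T(\mu)$, $\mu\vdash^n\ul b$; the target is that $V*T(\mu)$ is stable with stable range $\N'\geq\N+2\ul b$.

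\emph{Step 2: reducing $V$ to $V(\nu)$ and to the univariate case.} By Remark \ref{rem:filtrationstable}, $V$ has a finite composition series whose subquotients are equivalent to modules $V(\nu)$, and each such $V(\nu)$ has stable range $\N'\geq\N$: its structure maps induce isomorphisms as soon as the partitions $\nu[\N']$ are defined (by Pieri's rule), and $\nu[\N']$ is defined throughout $\N'\geq\N$ because $\nu$ is a constituent of $V$. Since $(-)*T(\mu)$ is exact and preserves trivial modules, it preserves equivalence, so $V*T(\mu)$ carries a finite filtration with subquotients equivalent to $V(\nu)*T(\mu)$, and invoking the remark on trivial submodules together with Lemma \ref{lem:trianglestable} reduces us to: $V(\nu)*T(\mu)$ is stable with stable range $\N'\geq\N+2\ul b$ whenever $V(\nu)$ has stable range $\N'\geq\N$. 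Now both factors are external tensor products, $V(\nu)=V(\nu^1)\bt\cdots\bt V(\nu^n)$ and $T(\mu)=T(\mu^1)\bt\cdots\bt T(\mu^n)$, so the compatibility of convolution with $\bt$ gives
\[V(\nu)*T(\mu)\;\cong\;\bigl(V(\nu^1)*T(\mu^1)\bigr)\bt\cdots\bt\bigl(V(\nu^n)*T(\mu^n)\bigr).\]
Since an external tensor product of stable modules is stable with stable range assembled coordinatewise from the factors (Definition \ref{def:tensorproduct}), the whole theorem comes down to the univariate statement: if $U$ is a stable $\FI^1$--module of stable range $N'\geq N$ and $\mu^i\vdash b_i$, then $U*T(\mu^i)$ is stable with stable range $N'\geq N+2b_i$.

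\emph{The base case, and the main obstacle.} The univariate statement is \cite[Theorem~2.8]{church}; reducing $U$ once more to a single $V(\nu^i)$, it is the assertion that the Littlewood--Richardson coefficients $c^{\zeta[m]}_{\nu^i[m-b_i],\,\mu^i}$ governing the decomposition of $(U*T(\mu^i))_m$ become constant in $m$, with a threshold of the form $m\geq N+2b_i$ (once the first part of $\nu^i[m-b_i]$ exceeds the common width of the shapes $\zeta$ that can occur, the skew diagram $\zeta[m]/\nu^i[m-b_i]$ no longer depends on $m$), and that the structure maps stabilize at the same point. This is the only step carrying genuine mathematical content, and where all the work sits; everything else — the two composition series, exactness of convolution and its interaction with external tensor products, the passage to equivalent modules — is formal, the one subtlety being the stable-range bookkeeping across the reductions, which is exactly what the remark on trivial submodules and Lemma \ref{lem:trianglestable} are there to handle.
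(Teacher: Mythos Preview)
Your reduction steps---superstable to stable, filtering $T$ by $T(\mu)$'s, filtering $V$ by modules equivalent to $V(\nu)$, and factoring through $\bt$ to land in the $n=1$ case---match the paper's approach, and the stable-range bookkeeping through these filtrations is fine. The gap is in the final univariate step.

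You assert that ``if $U$ is stable with range $N'\geq N$ then $U*T(\mu)$ is stable with range $N'\geq N+2b$'' \emph{is} \cite[Theorem~2.8]{church}. It is not: that theorem, as used here, says only that the specific modules $M(\mu)=V(\1)*T(\mu)$ are stable. To get from there to $V(\ll)*T(\mu)$ the paper uses a trick you are missing: the inclusion $V(\ll)\hookrightarrow M(\ll)=V(\1)*T(\ll)$ (from the proof of Lemma~\ref{lem:simplestable}) yields an embedding $V(\ll)*T(\mu)\hookrightarrow V(\1)*\bigl(T(\ll)*T(\mu)\bigr)=\bigoplus_{\nu}\bigl(V(\1)*T(\nu)\bigr)^{\oplus c_{\ll,\mu}^{\nu}}$, and stability of the submodule then follows from Lemma~\ref{lem:subquotstable}. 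For the explicit range $N+2a$ the paper does not argue via LR stabilization alone as you sketch; it invokes the weight and stability-degree machinery of \cite[Thm.~2.58]{church-ellenberg-farb} on each $V(\1)*T(\nu)$, using that $N\geq|\ll|+\ll_1$ (since $V(\ll)_m=0$ below this) while each $\nu$ has $|\nu|=|\ll|+a$ and $\nu_1\leq\ll_1+a$. This yields injectivity range $N'\geq N+2a$ for the submodule $V(\ll)*T(\mu)$ via Lemma~\ref{lem:trianglestable}; your LR observation on multiplicity stabilization is then exactly what upgrades injectivity to isomorphism, and that is how the paper finishes. The sentence ``the structure maps stabilize at the same point'' is precisely the part that needs the embedding-plus-CEF argument rather than a bare citation of Church.
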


\begin{remark} In the language of \cite{sam-snowden}, the first part of the theorem says that the tensor product between a finitely generated module and a finite length module is finitely generated, which is a tautology in their context.
\end{remark}

\begin{proof}[Proof of Theorem \ref{thm:convstable}] As before, it is enough to treat the case when $V$ is stable, the superstable case being a direct consequence. Since $V$ is stable, it has a composition series by Remark \ref{rem:filtrationstable} with terms that are equivalent to $V(\ll)$. Since convolutions preserve exactness, it follows that we may assume $V=V(\ll)$ for some $\ll$. Similarly, since $T$ has a filtration with supertrivial modules of the form $T(\mu)$, we may assume that $T=T(\mu)$. Writing $V=V(\ll^1)\bt\cdots\bt V(\ll^n)$ as a tensor product of $\FI^1$--modules, and $\mu=(\mu^1,\cdots,\mu^n)$, it follows that
\[V*T(\mu)=(V(\ll^1)*T(\mu^1))\bt\cdots\bt(V(\ll^n)*T(\mu^n)).\]
To prove the stability of $V*T$ and the estimation for the stable range we're then reduced to the case when $n=1$, i.e. when $\ll$ and $\mu\vdash a$ are partitions. By the argument in Lemma \ref{lem:simplestable}, $V(\ll)$ is a submodule in $M(\ll)=V(\1)*T(\ll)$, hence $V(\ll)*T(\mu)$ is a submodule in $V(\1)*T(\ll)*T(\mu)=V(\1)*(T(\ll)*T(\mu))$ which is stable by \cite[Theorem~2.8]{church}. It follows from Lemma \ref{lem:subquotstable} that $V(\ll)*T(\mu)$ is also stable.

To end the proof of the theorem we need to show that if $V(\ll)$ has stable range $N'\geq N$ and $\mu\vdash a$, then $V(\ll)*T(\mu)$ has stable range $N'\geq N+2a$. Since $V(\ll)_m=0$ for $m\leq |\ll|+\ll_1$, we must have $N\geq |\ll|+\ll_1$. As noted before, $V(\ll)*T(\ll)$ is a submodule of $V(\1)*(T(\ll)*T(\mu))=\bigoplus_{\nu} V(\1)*(T(\nu)^{\oplus c_{\ll,\mu}^{\nu}})$ where $c_{\ll,\mu}^{\nu}$ are the Littlewood--Richardson coefficients. In particular all partitions $\nu$ that appear have $|\nu|=|\ll|+a$ and $\nu_1\leq\ll_1+\mu_1\leq\ll_1+a$. It follows that for such $\nu$, $V(\1)*T(\nu)$ has \defi{weight} \cite[Def.~2.50]{church-ellenberg-farb} at most $|\ll|+a$ and \defi{stability degree} \cite[Def.~2.34]{church-ellenberg-farb} at most $\ll_1+a$, and since $(|\ll|+a)+(\ll_1+a)\leq N+2a$ we get by \cite[Thm.~2.58]{church-ellenberg-farb} that $V(\1)*T(\nu)$ has stable range $N'\geq N+2a$. We conclude that the module $V(\1)*(T(\ll)*T(\mu))$ has stable range $N'\geq N+2a$ which by the last part of Lemma \ref{lem:trianglestable} implies that $V(\ll)*T(\mu)$ has injectivity range $N'\geq N+2a$. Observe now that by the Littlewood--Richardson rule the multiplicities of the irreducible representations $[\delta[N']]$ appearing in $(V(\ll)*T(\mu))_{N'}$ stabilize for $N'\geq N+2a$ which then implies that the stable range of $V(\ll)*T(\mu)$ is $N'\geq N+2a$.
\end{proof}

\section{Inductive approach to computing the homology of packing complexes}\label{sec:induction}

We fix a sequence $\d=(d_1,\cdots,d_n)$ of positive integers, and drop it from the notation for the rest of this section: we write $\C_{\ul{A}}$ for the packing complex $\C_{\ul{A}}^{\d}$ associated to the $n$--tuple of sets $\A=(A_1,\cdots,A_n)$ (Definition \ref{def:SVcomplexes}). We write $\C_{\a_1,\cdots,\a_n}$ for the full subcomplex of $\C_{\A}$ generated by the vertex ($0$--simplex) $\a=(\a_1,\cdots,\a_n)$ and all its adjacent vertices (also known as the \defi{star} of $\a$). If we write $A_i'=A_i\setminus\a_i$, and $\ul{A'}=(A_1',\cdots,A_n')$, then $\C_{\a_1,\cdots,\a_n}$ can be thought of as the cone over $\C_{\A'}$ ($\C_{\A'}$ is called the \defi{link} of $\a$). 

We now fix an $n$--tuple $\N=(N_1,\cdots,N_n)$ of positive integers and the corresponding complex $\C_{\N}$. We proceed to construct a long exact sequence that relates the reduced homology groups of $\C_{\N}$ to those of complexes $\C_{\N'}$, for $\N'\leq\N$. Such long exact sequences have been previously studied in the case of matching complexes by \cites{bouc,sha-wachs,jonsson}, and in that of chessboard complexes by \cites{blvz,sha-wachs}.

\begin{example}\label{ex:segre33} Assume that $n=2$, $N_1=N_2=3$ and $d_1=d_2=1$. Since the sets $\a_i$ are singletons, $\a_i=\{a_i\}$, we write $a_i$ instead of $\a_i$. If we take $a_1=a_2=3$, then the subcomplex $\C_{a_1,a_2}$ of $\C_{(3,3)}^{(1,1)}$ looks like
\[
\begin{tikzpicture}
[label distance = .5ex];
\fill[black] (0,0) node[label=above:{(3,3)}]{} -- (-1,-1) node[label=left:{(2,2)}]{} -- (-1,1) node[label=left:{(1,1)}]{} -- (0,0);
\fill[black] (0,0) -- (1,1) node[label=right:{(1,2)}]{} -- (1,-1) node[label=right:{(2,1)}]{} -- (0,0);
\end{tikzpicture}
\]
hence it is the cone over the complex $\C_{(2,2)}^{(1,1)}$ discussed in Example \ref{ex:c22}.
\end{example}

There is one situation when it is easy to compute the homology of $\C_{\N}^{\d}$, namely when it is zero--dimensional.

\begin{lemma}\label{lem:0dimension}
 Suppose that $N_j<2d_j$ for some $j=1,\cdots,n$. Then $\C_{\N}^{\d}$ is zero--dimensional (or empty), and
\[H_0(\C_{\N}^{\d})=\bigoplus_{\ll\vdash^n\N}[\ll],\]
where the sum is over $n$--partitions $\ll=(\ll^1,\cdots,\ll^n)$ with each $\ll^i$ having at most two parts and $\ll^i_1\geq\max(d_i,N_i-d_i)$.
\end{lemma}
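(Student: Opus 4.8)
The plan is to argue directly from the combinatorial definition of the packing complex, exploiting the fact that the hypothesis $N_j < 2d_j$ makes the complex zero--dimensional, and then to identify the $\S_{\N}$--module structure on $H_0$ by a direct orbit/stabilizer computation. First I would observe that if $N_j < 2d_j$ for some $j$, then any two vertices $\a,\b\in V$ must have $\a_j\cap\b_j\neq\emptyset$ (two $d_j$--subsets of a set of size $< 2d_j$ cannot be disjoint), so no edges exist: $\C_{\N}^{\d}$ has only $0$--simplices and is the discrete set $V$. Hence $H_0(\C_{\N}^{\d})$ is the permutation module $\K[V]$, the free $\K$--vector space on the vertex set $V$, with $\S_{\N}=\S_{N_1}\times\cdots\times\S_{N_n}$ acting coordinatewise. (I would distinguish the edge case where some $N_i<d_i$, in which $V=\emptyset$ and the complex is empty; the stated formula then produces an empty sum since no $\ll^i$ with $\ll^i_1\geq d_i$ and $|\ll^i|\le N_i$ exists, so the conclusion still holds vacuously.)

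The core is therefore to decompose the permutation module $\K[V]$ into irreducibles. Since $V=\prod_i \binom{A_i}{d_i}$ as an $\S_{\N}$--set, and $\K[\,\cdot\,]$ turns Cartesian products of $G_i$--sets into external tensor products, we get $\K[V]=\bigotimes_{i=1}^n \K\!\left[\binom{A_i}{d_i}\right]$ as an $\S_{N_1}\times\cdots\times\S_{N_n}$--module. Thus it suffices to decompose each factor: the permutation module on $d$--element subsets of an $N$--set. This is the classical Young module $M^{(N-d,d)}$ (for $N-d\ge d$, i.e. $N\ge 2d$) or $M^{(d,N-d)}$ when $d > N-d$; in either case its decomposition into Specht modules $[\mu]$ runs over partitions $\mu$ of $N$ with at most two parts and $\mu_1\geq\max(d,N-d)$, each occurring with multiplicity one (this follows from the Pieri rule / the standard branching description of $M^{(N-d,d)}=\Ind_{\S_{N-d}\times\S_d}^{\S_N}(\1\bt\1)$). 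Taking the external tensor product over $i$ yields exactly $\bigoplus_{\ll}[\ll]$ with $\ll^i$ having at most two parts and $\ll^i_1\geq\max(d_i,N_i-d_i)$, as claimed; all multiplicities are $1$ because each factor has multiplicity--free decomposition.

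The main obstacle—though a mild one—is bookkeeping around the degenerate ranges: handling $N_i<d_i$ (empty complex), and handling the asymmetry between $d\leq N-d$ and $d > N-d$ so that the single bound $\ll^i_1\geq\max(d_i,N_i-d_i)$ correctly describes both the Young module $M^{(N-d,d)}$ and $M^{(d,N-d)}$. I would also double--check the boundary case $N_j=2d_j-1$ versus $N_j=2d_j$ to confirm zero--dimensionality is governed by strict inequality as stated. Everything else is an application of the well--known decomposition of the permutation module on $k$--subsets, which I would cite from \cite{ful-har} or \cite{macdonald} rather than reprove.
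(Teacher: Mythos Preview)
Your proposal is correct and follows essentially the same route as the paper: both identify $H_0$ as the permutation module on the vertex set $V$, recognize it as $\rm{Ind}_{\S_{\d}\times\S_{\N'}}^{\S_{\N}}(\1\oo\1)$ (equivalently, the tensor product of the Young modules $M^{(N_i-d_i,d_i)}$), and then invoke Pieri's rule to obtain the two--row decomposition. Your treatment is slightly more explicit about the tensor factorization and the degenerate ranges, but the argument is the same.
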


\begin{proof}
 The first assertion follows from the definition of the complexes $\C_{\N}^{\d}$. To describe $H_0(\C_{\N}^{\d})$, note that it has a natural basis indexed by the $0$--simplices of $\C_{\N}^{\d}$. $\S_{\N}$ acts transitively on these simplices with stabilizers isomorphic to $\S_{\d}\times\S_{\N'}$, where $\N'=(N_1-d_1,\cdots,N_n-d_n)$. It follows that as a $\S_{\N}$--representation
\[H_0(\C_{\N}^{\d})=\rm{Ind}_{\S_{\d}\times\S_{\N'}}^{\S_{\N}}(\1_{\S_{\d}}\oo\1_{\S_{\N'}})\]
whose decomposition into irreducibles can then be computed using Pieri's rule.
\end{proof}

We shall assume from now on that $N_j\geq 2d_j$ for all $j$. Fix an index $i$ between $1$ and $n$, and an element $a_i\in A_i$. Consider the $n$--tuple $\ul{A^i}=(A_1,\cdots,A_i\setminus\{a_i\},\cdots,A_n)$. We have that $\C_{\ul{A^i}}$ is a subcomplex of $\C_{\ul{A}}$, hence we get a relative homology long exact sequence:
\begin{equation}\label{eq:les1}
 \cdots\lra \H_r(\C_{\ul{A^i}})\lra \H_r(\C_{\ul{A}})\lra H_r(\C_{\ul{A^i}},\C_{\ul{A}})\lra\cdots
\end{equation}
Note that this exact sequence is equivariant with respect to the action of $\S_{\ul{A^i}}\subset\S_{\ul{A}}$. We identify $H_r(\C_{\ul{A^i}},\C_{\ul{A}})$ with $\H_r(X^i)$, where $X^i$ is the quotient space $\C_{\ul{A}}/\C_{\ul{A^i}}$. We write $*$ for the image of $\C_{\ul{A^i}}$ in the quotient. $X^i$ is connected (because $N_j\geq 2d_j$ for all $j$), hence $\H_0(X^i)=0$, and furthermore, it is covered by subspaces $X^i_{\a_1,\cdots,\a_i,\cdots,\a_n}$, where $\a_j\subset A_j$ for all $j$, $a_i\in\a_i$, and
\[X^i_{\a_1,\cdots,\a_i,\cdots,\a_n}=\rm{Image}(\C_{\a_1,\cdots,\a_n}\subset\C^{\ul{A}}\lra X^i).\]
Since any $0$--simplex of $\C_{\a_1,\cdots,\a_n}$ distinct from $(\a_1,\cdots,\a_n)$ is contained in $\ul{A^i}$, it follows that any two distinct subspaces $X^i_{\a_1,\cdots,\a_i,\cdots,\a_n}$ of $X^i$ intersect in a single point, namely $*$. This shows that for $r>0$
\begin{equation}\label{eq:oplusXi}
 \H_r(X^i)=\bigoplus_{\substack{\a_j\subset A_j\\ a_i\in\a_i}}\H_r(X^i_{\a_1,\cdots,\a_i,\cdots,\a_n}).
\end{equation}
Note that $X^i_{\a_1,\cdots,\a_i,\cdots,\a_n}$ is obtained by taking the cone over $\C_{\ul{A'}}$ (where $A_j'=A_j\setminus\a_j$ for all $j$, as before), and then collapsing $\C_{\ul{A'}}$, so it can be naturally identified with the suspension of $\C_{\ul{A'}}$ (see Example \ref{ex:segre33smash} below). The effect of suspension on reduced homology is just a shift in degrees, thus
\[\H_r(X^i_{\a_1,\cdots,\a_i,\cdots,\a_n})=\H_{r-1}(\C_{\ul{A'}}).\]
Equation (\ref{eq:oplusXi}) then becomes
\begin{equation}\label{eq:HrXi}
\H_r(X^i)=\bigoplus_{\substack{\a_j\subset A_j\\ a_i\in\a_i}}\H_{r-1}(\C_{\ul{A'}(\a_1,\cdots,\a_n)}),
\end{equation}
where we write $\ul{A'}(\a_1,\cdots,\a_n)$ to emphasize the dependence of $\ul{A'}$ on the sets $\a_j$.

\begin{example}\label{ex:segre33smash} Continuing Example \ref{ex:segre33}, we fix the index $i=2$, and $a_2=3$. The quotient space $X^i$ is then
\[
\begin{tikzpicture}
 \def\firstcircle{(-0.5,1) circle (1.1180)}
 \def\secondcircle{(-1,1) circle (1.4142)}
 \begin{scope}[even odd rule]
  \clip \secondcircle (-2,-2) rectangle (2,3);
  \fill[black] \firstcircle;
 \end{scope}

 \def\firstcircle{(0.5,1) circle (1.1180)}
 \def\secondcircle{(1,1) circle (1.4142)}
 \begin{scope}[even odd rule]
  \clip \secondcircle (-2,-2) rectangle (3,3);
  \fill[black] \firstcircle;
 \end{scope}
 \def\firstcircle{(-0.6160,-0.9330) circle (1.1180)}
 \def\secondcircle{(-0.3660,-1.3660) circle (1.4142)}
 \begin{scope}[even odd rule]
  \clip \secondcircle (-3,-2.2) rectangle (1,1);
  \fill[black] \firstcircle;
 \end{scope}

 \def\firstcircle{(-1.1160,-0.0670) circle (1.1180)}
 \def\secondcircle{(-1.3660,0.3660) circle (1.4142)}
 \begin{scope}[even odd rule]
  \clip \secondcircle (-3,-2) rectangle (1,2);
  \fill[black] \firstcircle;
 \end{scope}
 \def\firstcircle{(0.6160,-0.9330) circle (1.1180)}
 \def\secondcircle{(0.3660,-1.3660) circle (1.4142)}
 \begin{scope}[even odd rule]
  \clip \secondcircle (-1,-2.2) rectangle (3,2);
  \fill[black] \firstcircle;
 \end{scope}

 \def\firstcircle{(1.1160,-0.0670) circle (1.1180)}
 \def\secondcircle{(1.3660,0.3660) circle (1.4142)}
 \begin{scope}[even odd rule]
  \clip \secondcircle (-1,-2) rectangle (3,2);
  \fill[black] \firstcircle;
 \end{scope}

\draw (0,0) node[label=above:{*}]{};
\draw (0,2) node[label=above:{(1,3)}]{};
\draw (-1.7320,-1) node[label=left:{(2,3)}]{};
\draw (1.7320,-1) node[label=right:{(3,3)}]{};
\draw (0,1) node{\tiny $X^i_{1,3}$};
\draw (-0.8660,-0.5) node{\tiny $X^i_{2,3}$};
\draw (0.8660,-0.5) node{\tiny $X^i_{3,3}$};

\end{tikzpicture}
\]
$X^i$ is covered by the three subsets $X^i_{j,3}$, $j=1,2,3$, each of which consists of two pairs of points, four $1$--cells and two $2$--cells. $X^i$ has a natural action of the product of symmetric groups $\S_3\times\S_2$. The subspace $X^i_{3,3}$ is the suspension of the complex in Example \ref{ex:c22}, whose only nonvanishing reduced homology group is $\H_0$ which is $1$--dimensional. It follows that $\H_1(X^i)$ has dimension $3$, which is not hard to see from the picture.
\end{example}

We can compute $\H_r(X^i)$ more precisely by keeping track of the equivariance of the decomposition (\ref{eq:HrXi}) with respect to the group $\S_{\ul{A^i}}=\S_{A_1}\times\cdots\times\S_{A_i\setminus\{a_i\}}\times\cdots\times\S_{A_n}$. Let us fix a collection $\a_1,\cdots,\a_n$, with $a_i\in\a_i$, and the corresponding $n$--tuple $\ul{A'}$. For $j=1,\cdots,n$, $j\neq i$, we have a natural inclusion of $\S_{A_j'}\times\S_{\a_j}\subset\S_{A_j}$. Similarly, we have $\S_{A_i'}\times \S_{\a_i\setminus\{a_i\}}\subset\S_{A_i\setminus\{a_i\}}$. Denoting by $\S_{\ul{\a^i}}$ the product $\S_{\a_1}\times\cdots\times\S_{\a_i\setminus\{a_i\}}\times\cdots\times\S_{\a_n}$, the previous inclusions give rise to a natural containment
\[H=\S_{\ul{A'}}\times\S_{\ul{\a^i}}\subset\S_{\ul{A^i}}.\]
The space $X^i_{\a_1,\cdots,\a_n}$ admits a natural action of the group $H$, where the factor $\S_{\ul{\a^i}}$ acts trivially. The reduced homology groups $\H_r(X^i_{\a_1,\cdots,\a_n})$ are therefore $H$--representations. The complex $\C_{\ul{A'}}$ has a natural $\S_{\ul{A'}}$--action. We can extend this to an $H$--action by letting $\S_{\ul{\a^i}}$ act trivially. It follows that the identification
\[\H_r(X^i_{\a_1,\cdots,\a_i,\cdots,\a_n})=\H_{r-1}(\C_{\ul{A'}})\]
is in fact an equality of $H$--modules. Moreover, if we write $\mc{S}$ for a system of representatives of the collection of left cosets $\S_{\ul{A^i}}/H$, then we can rewrite the decomposition (\ref{eq:HrXi}) as
\[\H_r(X^i) = \bigoplus_{\s\in\mc{S}} \s \cdot \H_{r-1}(\C_{\ul{A'}}),\]
or alternatively
\[\H_r(X^i) = \rm{Ind}_H^{\S_{\ul{A^i}}}\left(\H_{r-1}(\C_{\ul{A'}})\right).\]

Putting everything together, we obtain the following

\begin{proposition}\label{prop:les}
 Fix a sequence $\d=(d_1,\cdots,d_n)$ of positive integers. Consider sets $A_1,\cdots,A_n$, with $|A_j|=N_j\geq 2d_j$ for $j=1,\cdots,n$. Fix an index $i\leq n$ and an element $a_i\in A_i$. Let $\a_1,\cdots,\a_n$ be subsets of $A_1,\cdots,A_n$ respectively, with $a_i\in\a_i$. Let $A_j'=A_j\setminus\a_j$, for $j=1,\cdots,n$, and write $\ul{A}=(A_1,\cdots,A_n)$, $\ul{A'}=(A_1',\cdots,A_n')$, $\ul{A^i}=(A_1,\cdots,A_i\setminus\{a_i\},\cdots,A_n)$ and $\ul{\a^i}=(\a_1,\cdots,\a_i\setminus\{a_i\},\cdots,\a_n)$. We have a long exact sequence
\[
\begin{split}
\cdots\to &\rm{Ind}_{\S_{\ul{A'}}\times\S_{\ul{\a^i}}}^{\S_{\ul{A^i}}}\left(\H_{r}(\C_{\ul{A'}}^{\d})\o \1\right)\to \H_r(\C_{\ul{A^i}}^{\d})\to \rm{Res}_{\S_{\ul{A^i}}}^{\S_{\ul{A}}}(\H_r(\C_{\ul{A}}^{\d}))\to \\
\to &\rm{Ind}_{\S_{\ul{A'}}\times\S_{\ul{\a^i}}}^{\S_{\ul{A^i}}}\left(\H_{r-1}(\C_{\ul{A'}}^{\d})\o \1\right)\to\H_{r-1}(\C_{\ul{A^i}}^{\d})\to \rm{Res}_{\S_{\ul{A^i}}}^{\S_{\ul{A}}}(\H_{r-1}(\C_{\ul{A}}^{\d}))\to\cdots
\end{split}
\]
which is equivariant with respect to the action of the group $\S_{\ul{A^i}}$.
\end{proposition}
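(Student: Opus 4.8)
The plan is to read off the long exact sequence from the analysis of the quotient space $X^i$ already carried out above, so almost no new content is needed. First I would write down the relative homology long exact sequence \eqref{eq:les1} associated to the inclusion of simplicial complexes $\C_{\ul{A^i}}^{\d}\subset\C_{\ul{A}}^{\d}$. Since $\C_{\ul{A^i}}^{\d}$---the full subcomplex spanned by the vertices $\a$ with $a_i\notin\a_i$---is invariant under the subgroup $\S_{\ul{A^i}}\subset\S_{\ul{A}}$, every map in \eqref{eq:les1} is $\S_{\ul{A^i}}$-equivariant; this is exactly why the groups $\H_r(\C_{\ul{A}}^{\d})$ enter the sequence only through $\rm{Res}_{\S_{\ul{A^i}}}^{\S_{\ul{A}}}(\H_r(\C_{\ul{A}}^{\d}))$.

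Next I would rewrite the relative terms of \eqref{eq:les1}. As $\C_{\ul{A^i}}^{\d}$ is a subcomplex of $\C_{\ul{A}}^{\d}$, the pair is good, so these terms are the reduced homology groups $\H_r(X^i)$ of $X^i=\C_{\ul{A}}^{\d}/\C_{\ul{A^i}}^{\d}$, which have already been computed above: the hypothesis $N_j\geq 2d_j$ gives $|A_j'|=N_j-d_j\geq d_j\geq 1$, so every link $\C_{\ul{A'}}^{\d}$ is a nonempty complex and $X^i$ is connected; the pieces $X^i_{\a_1,\cdots,\a_i,\cdots,\a_n}$ pairwise meet only in the basepoint $*$ (two vertices carrying $a_i$ in slot $i$ already intersect there and so span no edge), and each such piece is the suspension of the corresponding link $\C_{\ul{A'}}^{\d}$; combining the wedge splitting \eqref{eq:oplusXi} with the suspension isomorphism gives \eqref{eq:HrXi}. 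Finally I would keep track of the $\S_{\ul{A^i}}$-action on \eqref{eq:HrXi}: the group permutes the summands transitively, the stabilizer of a fixed tuple $\a$ being $H=\S_{\ul{A'}}\times\S_{\ul{\a^i}}$ (the subgroup preserving the decomposition $A_j=A_j'\sqcup\a_j$, with $a_i$ already removed from $A_i$), and the factor $\S_{\ul{\a^i}}$ acts trivially on $\H_{r-1}(\C_{\ul{A'}}^{\d})$; hence $\H_r(X^i)\cong\rm{Ind}_{H}^{\S_{\ul{A^i}}}\!\left(\H_{r-1}(\C_{\ul{A'}}^{\d})\o\1\right)$ as $\S_{\ul{A^i}}$-modules for every $r\geq 0$ (with both sides vanishing when $r=0$).

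Substituting this into \eqref{eq:les1} and relabelling the homological degree then yields precisely the asserted long exact sequence, equivariant for $\S_{\ul{A^i}}$. The one step that deserves care---and the main, if modest, obstacle---is the equivariance bookkeeping: one must check that the good-pair isomorphism between the relative term of \eqref{eq:les1} and $\H_r(X^i)$, the suspension isomorphism, and the wedge splitting are all natural for the relevant group actions, so that the induced-module structure is carried along by the connecting homomorphisms. Concretely this reduces to the standard fact that, for a based $G$-space that is a finite wedge of subspaces permuted transitively by $G$ with point stabilizer $H$, the reduced homology is $\rm{Ind}_H^G$ of the reduced homology of a single summand, together with naturality of the long exact sequence of a pair.
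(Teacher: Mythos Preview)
Your proposal is correct and follows essentially the same route as the paper: the argument for Proposition~\ref{prop:les} is precisely the discussion preceding it, namely the long exact sequence of the pair \eqref{eq:les1}, the identification of the relative term with $\H_r(X^i)$, the wedge decomposition \eqref{eq:oplusXi} together with the suspension isomorphism yielding \eqref{eq:HrXi}, and the transitive $\S_{\ul{A^i}}$-action on the summands with stabilizer $H=\S_{\ul{A'}}\times\S_{\ul{\a^i}}$ giving the induced-module description. Your added remarks on the naturality of the good-pair, suspension, and wedge isomorphisms make explicit what the paper leaves implicit, but do not depart from its approach.
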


\begin{remark}
 If we make the convention that $\H_{-1}(\C_{\ul{A'}}^{\d})$ is the trivial $\S_{\ul{A'}}$--representation when $\C_{\ul{A'}}^{\d}$ is empty (i.e. $N_j<2d_j$ for some $j$), then the conclusion of the proposition remains true when $N_j$ is allowed to be smaller than $2d_j$.
\end{remark}


\begin{example}\label{ex:segre33homology} We continue with Example \ref{ex:segre33smash}. Note that the only nonzero reduced homology group of $\C_{(2,2)}^{(1,1)}$ is $\H_0$, and as explained in the introduction, its description as a $\S_2\times\S_2$--module is
\[\Yvcentermath1\H_0(\C_{(2,2)}^{(1,1)})={\tiny\yng(1,1)\o\yng(1,1)}.\]
Inducing up to $\S_3\times\S_2$ and using Pieri's rule, we obtain
\[\rm{Ind}_{\S_2\times\S_2}^{\S_3\times\S_2}(\H_0(\C_{(2,2)}^{(1,1)})) = \Yvcentermath1\left({\tiny\yng(2,1)+\yng(1,1,1)}\right)\o{\tiny\yng(1,1)}.\]
Using the arguments we're about to present (we leave this as an exercise for the interested reader), one can deduce that the only nonzero reduced homology group of $\C_{(3,2)}^{(1,1)}$ is $\H_1$, and
\[\Yvcentermath1\H_1(\C_{(3,2)}^{(1,1)})={\tiny\yng(1,1,1)\o\yng(2)}.\]
We would like to compute the reduced homology groups of $\C_{(3,3)}^{(1,1)}$. The long exact sequence in Proposition \ref{prop:les} yields
\[0\to\H_1(\C_{(3,2)}^{(1,1)})\to\H_1(\C_{(3,3)}^{(1,1)})\to \rm{Ind}_{\S_2\times\S_2}^{\S_3\times\S_2}(\H_0(\C_{(2,2)}^{(1,1)}))\to\H_0(\C_{(3,2)}^{(1,1)})\to\H_0(\C_{(3,3)}^{(1,1)})\to 0, \]
i.e.
\[\Yvcentermath1 0\to{\tiny\yng(1,1,1)\o\yng(2)}\to\H_1(\C_{(3,3)}^{(1,1)})\to\left({\tiny\yng(2,1)+\yng(1,1,1)}\right)\o{\tiny\yng(1,1)}\to 0\to\H_0(\C_{(3,3)}^{(1,1)})\to 0.\]
This forces $\H_0(\C_{(3,3)}^{(1,1)})=0$, which can also be seen from the fact that $\C_{(3,3)}^{(1,1)}$ is connected, and moreover
\begin{equation}\label{eq:resH1C33}
\Yvcentermath1 \rm{Res}_{\S_3\times\S_2}^{\S_3\times\S_3}\left(\H_1(\C_{(3,3)}^{(1,1)})\right)={\tiny\yng(1,1,1)\o\yng(2)+\yng(2,1)\o\yng(1,1)+\yng(1,1,1)\o\yng(1,1)}.
\end{equation}
There are two irreducible $\S_3\times\S_3$--representations whose restrictions to $\S_3\times\S_2$ contain the representation $[(1,1,1)]\o[(2)]$, namely
\[\Yvcentermath1 {\tiny\yng(1,1,1)\o\yng(3)\quad \rm{and}\quad \yng(1,1,1)\o\yng(2,1)}.\]
If $[(1,1,1)]\o[3]$ has positive multiplicity in $\H_1(\C_{(3,3)}^{(1,1)})$, then by symmetry the same is true for $[(3)]\o[(1,1,1)]$. But then
\[\Yvcentermath1\tiny\rm{Res}_{\S_3\times\S_2}^{\S_3\times\S_3}\left(\yng(3)\o\yng(1,1,1)\right)=\yng(3)\o\yng(1,1)\]
would have positive multiplicity inside  $\rm{Res}_{\S_3\times\S_2}^{\S_3\times\S_3}\left(\H_1(\C_{(3,3)}^{(1,1)})\right)$, which is not the case. It follows that $[(1,1,1)]\o[(2,1)]$ must occur in $\H_1(\C_{(3,3)}^{(1,1)})$, and by symmetry the same has to be true about $[(2,1)]\o[(1,1,1)]$. Since
\[\Yvcentermath1\tiny \rm{Res}_{\S_3\times\S_2}^{\S_3\times\S_3}\left(\yng(1,1,1)\o\yng(2,1)+\yng(2,1)\o\yng(1,1,1)\right)=\yng(1,1,1)\o\yng(2)+\yng(2,1)\o\yng(1,1)+\yng(1,1,1)\o\yng(1,1)\]
coincides with the restriction of $\H_1(\C_{(3,3)}^{(1,1)})$ to $\S_3\times\S_2$ (see \ref{eq:resH1C33}), this forces
\[\Yvcentermath1\tiny \H_1(\C_{(3,3)}^{(1,1)})=\yng(1,1,1)\o\yng(2,1)+\yng(2,1)\o\yng(1,1,1).\]
Note that this coincides with the description of the functor $K_{2,1}$ in Figure \ref{fig:2factorsyz} on page \pageref{fig:2factorsyz}. That this should be the case is a consequence of Theorem \ref{thm:main}.
\end{example}

\section{Representation stability for packing complexes}\label{sec:SVstability}

In this section we prove the stabilization of the homology groups of packing complexes. The argument is based on the general results on representation stability established in Section \ref{sec:stability}.

\begin{theorem}\label{thm:stability} We fix $n>0$ and an $n$--tuple of positive integers $\d=(d_1,\cdots,d_n)$. For $k\geq -1$ the $\FI^n$--module $\mc{H}_k$ defined by letting
\[(\mc{H}_k)_{\A}=\H_k\left(\C_{\A}^{\d}\right)\]
whenever $\A$ is an $n$--tuple of finite sets is representation superstable and trivial.

Moreover, if $r<n$ and if we fix $(n-r)$ sets, say $A_{r+1},\cdots,A_n$, of cardinalities $N_{r+1},\cdots,N_n$ respectively, and if we let 
\[m=\min_{j=r+1,\cdots,n}\left\lfloor\frac{N_j}{d_j}\right\rfloor,\]
then the pull--back $\FI^r$--module $\mc{H}_k(N_{r+1},\cdots,N_n)$ defined by
\[\mc{H}_k(N_{r+1},\cdots,N_n)_{(A_1,\cdots,A_r)}=(\mc{H}_k)_{(A_1,\cdots,A_n)}\]
has stable range $\N'\geq\N$, where $\N=2m\cdot(d_1,d_2,\cdots,d_r)$.
\end{theorem}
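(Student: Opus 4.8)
The plan is to extract everything from the long exact sequence of Proposition~\ref{prop:les}, the zero--dimensional computation of Lemma~\ref{lem:0dimension}, and the permanence properties of Section~\ref{sec:stability}, the decisive one being that convolution with a supertrivial module preserves stability and moves the stable range by a controlled amount (Theorem~\ref{thm:convstable}). That $\mc{H}_k$ is trivial is immediate from Theorem~\ref{thm:vanishing}: $\H_k(\C_{\N}^{\d})=0$ once $N_j\ge(k+1)(d_j+1)+d_j$ for all $j$, in particular for $\N\gg 0$. Granting the quantitative part, superstability follows as well, since by Definition~\ref{def:superstability} a restriction of $\mc{H}_k$ is either $\mc{H}_k$ itself --- stable because trivial --- or, after permuting coordinates, a pull--back $\FI^r$--module with $r<n$. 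So everything reduces to the following claim, which I prove by induction on $|\ul c|:=N_{r+1}+\cdots+N_n$, simultaneously for all $r<n$, all $k\ge -1$, and all choices $\ul c=(N_{r+1},\dots,N_n)$: the pull--back $\FI^r$--module $\mc{H}_k(\ul c)$ has stable range $\N'\ge 2m\cdot(d_1,\dots,d_r)$, where $m=\min_{j>r}\lfloor N_j/d_j\rfloor$.

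The module $\mc{H}_{-1}(\ul c)$ only records whether $\C_{\ul A}^{\d}$ is empty, so it is either constant or eventually zero, hence stable in the stated range. If $N_j<d_j$ for some $j>r$ then $\C_{\ul A}^{\d}$ is empty for every choice of the varying sets and $\mc{H}_k(\ul c)=0$ for $k\ge 0$; assume then $N_j\ge d_j$ for all $j>r$. If moreover $N_j<2d_j$ for some $j>r$ --- which happens in particular whenever $|\ul c|<2(d_{r+1}+\cdots+d_n)$, so this covers the base of the induction --- then $m=1$ and by Lemma~\ref{lem:0dimension} the complex $\C_{\ul A}^{\d}$ is zero--dimensional for every choice of the varying sets, so $\mc{H}_k(\ul c)=0$ for $k\ge 1$, while $H_0(\C_{\ul A}^{\d})=\rm{Ind}_{\S_{\d}\times\S_{\N-\d}}^{\S_{\N}}(\1)$. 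Isolating the varying coordinates exhibits this, as an $\FI^r$--module, as a finite direct sum of copies of the convolution $V(\1)\ast T$, where $T$ is the supertrivial $\FI^r$--module supported in degree $(d_1,\dots,d_r)$ with one--dimensional fibre; since $V(\1)$ has stable range $\N'\ge 0$, Theorem~\ref{thm:convstable} gives $V(\1)\ast T$ stable range $\N'\ge 2(d_1,\dots,d_r)=2m(d_1,\dots,d_r)$, and then the reduced group $\H_0(\C_{\ul A}^{\d})$, being the kernel of the augmentation onto a copy of $V(\1)$, inherits the same stable range by Lemma~\ref{lem:trianglestable}.

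For the inductive step we may assume $N_j\ge 2d_j$ for all $j>r$, so $m\ge 2$. Fix any index $i$ with $r<i\le n$ and apply Proposition~\ref{prop:les} in direction $i$; restriction along $\S_{\ul{A^i}}\subset\S_{\ul A}$ only forgets part of the frozen $i$--th factor, so (granting that the construction of Section~\ref{sec:induction} is natural in the $r$ varying coordinate--sets) the sequence becomes an exact sequence of $\FI^r$--modules
\[\cdots\to[\ast]_k\to\mc{H}_k(\ul c^{(i)})\to\mc{H}_k(\ul c)\to[\ast]_{k-1}\to\mc{H}_{k-1}(\ul c^{(i)})\to\mc{H}_{k-1}(\ul c)\to\cdots,\]
where $\ul c^{(i)}$ is $\ul c$ with its $i$--th entry lowered by one, and --- the crucial point --- the ``suspension'' term $[\ast]_k=\rm{Ind}_{\S_{\ul{A'}}\times\S_{\ul{\a^i}}}^{\S_{\ul{A^i}}}\bigl(\H_k(\C_{\ul{A'}}^{\d})\o\1\bigr)$ is, as an $\FI^r$--module, obtained from $\mc{H}_k\bigl(\ul c-(d_{r+1},\dots,d_n)\bigr)$ by convolving with the same $T$ in the varying coordinates and inducing up the action in each frozen coordinate, the latter operation only multiplying the module by a fixed finite--dimensional space. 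Both $\ul c^{(i)}$ and $\ul c-(d_{r+1},\dots,d_n)$ have total size $<|\ul c|$, so by the induction hypothesis all the modules $\mc{H}_\bullet(\ul c^{(i)})$ and $\mc{H}_\bullet\bigl(\ul c-(d_{r+1},\dots,d_n)\bigr)$ are stable; the $\min$--parameter of $\ul c-(d_{r+1},\dots,d_n)$ equals $m-1$, so $\mc{H}_\bullet\bigl(\ul c-(d_{r+1},\dots,d_n)\bigr)$ has stable range $\N'\ge 2(m-1)(d_1,\dots,d_r)$ and Theorem~\ref{thm:convstable} upgrades $[\ast]_k$ and $[\ast]_{k-1}$ to stable range $\N'\ge 2(m-1)(d_1,\dots,d_r)+2(d_1,\dots,d_r)=2m(d_1,\dots,d_r)$, while the $\min$--parameter of $\ul c^{(i)}$ is at least $m-1$, so $\mc{H}_\bullet(\ul c^{(i)})$ has stable range $\N'\ge 2(m-1)(d_1,\dots,d_r)$ as well. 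Since a module with a given stable range also has every larger one, all four modules flanking $\mc{H}_k(\ul c)$ in the displayed sequence have stable range $\N'\ge 2m(d_1,\dots,d_r)$; by Lemma~\ref{lem:trianglestable} so does $\mc{H}_k(\ul c)$, completing the induction.

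The step demanding real care is making the long exact sequence of Proposition~\ref{prop:les} into a sequence of $\FI^r$--modules: one must verify the naturality of the construction of Section~\ref{sec:induction} in the varying coordinate--sets and, more importantly, identify the suspension terms precisely as convolutions with $T(\1)$ in the varying directions --- it is exactly this that lets Theorem~\ref{thm:convstable} produce the sharp range $2m\cdot(d_1,\dots,d_r)$. With that identification and the base--case computation in hand (recognizing the homology of Lemma~\ref{lem:0dimension} as a convolution and accounting for the passage from $H_0$ to $\H_0$), the rest is bookkeeping with the permanence lemmas, the only useful subtlety being that a small stable range implies every larger one, so the estimates coming from the various terms can simply be merged.
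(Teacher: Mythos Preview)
Your argument is essentially the paper's: induction on the frozen parameters, the base cases handled via Lemma~\ref{lem:0dimension} and the identification of $H_0$ as $V(\1)*T(\mu)$, and the inductive step via the long exact sequence of Proposition~\ref{prop:les} with the suspension term recognized as a convolution so that Theorem~\ref{thm:convstable} controls its stable range. The paper inducts on $(N_{r+1},\dots,N_n)$ lexicographically and always takes $i=n$, whereas you induct on the total size $|\ul c|$ and allow any $i>r$; these are equivalent bookkeeping choices.

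One small slip to fix: you write that since the $\min$--parameter $m'$ of $\ul c^{(i)}$ is at least $m-1$, the module $\mc{H}_\bullet(\ul c^{(i)})$ has stable range $\N'\ge 2(m-1)(d_1,\dots,d_r)$. That implication goes the wrong way --- when $m'=m$ the inductive hypothesis only gives stable range $\N'\ge 2m(d_1,\dots,d_r)$, which does \emph{not} imply the stronger bound $\N'\ge 2(m-1)(d_1,\dots,d_r)$. The point you actually need (and which the paper uses) is that $m'\le m$, since lowering one $N_i$ can only lower the minimum; hence by induction the stable range is $\N'\ge 2m'(d_1,\dots,d_r)$ with $2m'(d_1,\dots,d_r)\le 2m(d_1,\dots,d_r)$, and monotonicity gives the desired $\N'\ge 2m(d_1,\dots,d_r)$ directly. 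Your final conclusion is unaffected, but the intermediate sentence should be rephrased.
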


\begin{corollary}\label{cor:stability} For $k\geq -1$ and fixed values of the parameters $N_{r+1},\cdots,N_n$, there exist a finite number of $n$--partitions $\ll=(\ll^1,\cdots,\ll^n)$ and multiplicities $m_{\ll}>0$ such that the decomposition
\[\H_k\left(\C_{(N_1,\cdots,N_n)}^{(d_1,\cdots,d_n)}\right)=\bigoplus_{\ll} ([\ll^1[N_1]]\o\cdots\o[\ll^r[N_r]]\o[\ll^{r+1}]\o\cdots\o[\ll^n])^{\oplus m_{\ll}}\]
holds for $N_i\geq 2m\cdot d_i$, $i=1,\cdots,r$, where $m=\min\{\lfloor N_j/d_j\rfloor:j=r+1,\cdots,n\}$.
\end{corollary}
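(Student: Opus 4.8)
The plan is to read the corollary off from Theorem~\ref{thm:stability} together with Remark~\ref{rem:filtrationstable}, once the contribution of the fixed last $n-r$ factors has been split off. Fix $k\geq -1$ and sets $A_{r+1},\dots,A_n$ of cardinalities $N_{r+1},\dots,N_n$, and put $m=\min_{j>r}\lfloor N_j/d_j\rfloor$. The symmetric group $\S_{A_{r+1}}\times\cdots\times\S_{A_n}$ acts on each $\C_{\A}^{\d}$ (Definition~\ref{def:SVcomplexes}), and this action commutes with the morphisms induced by injections $A_i\hookrightarrow A_i'$ for $i\leq r$; hence the pull--back module $\mc{H}_k(N_{r+1},\dots,N_n)$ is an $\FI^r$--module valued in $\S_{N_{r+1}}\times\cdots\times\S_{N_n}$--representations. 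Since $\mathrm{char}\,\K=0$, for each $(n-r)$--partition $\mu\vdash^{n-r}(N_{r+1},\dots,N_n)$ the projection $e_\mu$ onto the $[\mu]$--isotypic component (for the $\S_{N_{r+1}}\times\cdots\times\S_{N_n}$--action) is an idempotent endomorphism of this $\FI^r$--module; these idempotents are orthogonal with sum the identity, so
\[\mc{H}_k(N_{r+1},\dots,N_n)=\bigoplus_{\mu}M_\mu,\qquad M_\mu:=e_\mu\,\mc{H}_k(N_{r+1},\dots,N_n),\]
is a decomposition of $\FI^r$--modules, with $M_\mu$ recording the multiplicity of $[\mu]$.

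Next I would invoke the quantitative part of Theorem~\ref{thm:stability}: $\mc{H}_k(N_{r+1},\dots,N_n)$ is representation stable with stable range $\N'\geq\N$ for $\N=2m\cdot(d_1,\dots,d_r)$. Because $M_\mu$ is a direct summand and the stabilization maps $\phi_{\N_1,\N_2}(\nu)$ of $\mc{H}_k(N_{r+1},\dots,N_n)$ decompose as $\bigoplus_\mu\phi_{\N_1,\N_2}(\nu)|_{M_\mu}$ (induction and taking isotypic components are additive, and $e_\mu$ commutes with all transition maps), each $M_\mu$ also has stable range $\N'\geq\N$. By Remark~\ref{rem:filtrationstable} each $M_\mu$ then has a finite set $\mc{P}_\mu$ of constituents, i.e. $r$--partitions $\nu=(\nu^1,\dots,\nu^r)$, with multiplicities $m_{\nu,\mu}>0$, and
\[(M_\mu)_{(N_1,\dots,N_r)}=\bigoplus_{\nu\in\mc{P}_\mu}\bigl([\nu^1[N_1]]\o\cdots\o[\nu^r[N_r]]\bigr)^{\oplus m_{\nu,\mu}}\]
for $N_i\geq 2m\cdot d_i$, $i=1,\dots,r$. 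Tensoring with $[\mu]=[\mu^1]\o\cdots\o[\mu^{n-r}]$ and summing over the finitely many $\mu$ gives the asserted decomposition, with $\mc{P}$ the set of $n$--partitions $\ll=(\nu^1,\dots,\nu^r,\mu^1,\dots,\mu^{n-r})$ for $\nu\in\mc{P}_\mu$ and $m_\ll=m_{\nu,\mu}$; this collection is finite since $(N_{r+1},\dots,N_n)$ is fixed and each $\mc{P}_\mu$ is finite.

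I do not expect a genuine obstacle; the substance is entirely in Theorem~\ref{thm:stability}, and what remains is the bookkeeping above. The one point that needs care is that Remark~\ref{rem:filtrationstable} only records the decomposition for $\N\gg 0$, whereas the corollary demands the explicit threshold $N_i\geq 2m\cdot d_i$. To bridge this I would argue, as in the proof of Lemma~\ref{lem:simplestable}, that $\nu[\N']$ occurs with multiplicity exactly one in $\rm{Ind}_{\S_{\N}}^{\S_{\N'}}([\nu[\N]])$ for every $\N'\geq\N$ (iterated Pieri: all new boxes must be placed in the first rows), so that an isomorphism $\phi_{\N,\N'}(\nu)$ forces the multiplicity of $\nu[\N']$ in $(M_\mu)_{\N'}$ to equal that of $\nu[\N]$ in $(M_\mu)_{\N}$, and also prevents any constituent not already present at $\N$ from appearing at $\N'$. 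Thus having stable range $\N'\geq\N$ with $\N=2m\cdot(d_1,\dots,d_r)$ is precisely what upgrades Remark~\ref{rem:filtrationstable} to a decomposition valid for all $N_i\geq 2m\cdot d_i$.
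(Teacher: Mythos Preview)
Your argument is correct and is essentially the intended one: the paper states Corollary~\ref{cor:stability} without proof, treating it as an immediate consequence of Theorem~\ref{thm:stability}, and your proposal spells out exactly the bookkeeping needed---splitting off the $\S_{N_{r+1}}\times\cdots\times\S_{N_n}$--isotypic components as direct summands of the $\FI^r$--module and reading off constant multiplicities from the stable range via Pieri. One small expository point: when you apply Remark~\ref{rem:filtrationstable} to $M_\mu$ and then speak of ``tensoring with $[\mu]$'', it is cleaner to work with the multiplicity space $\Hom_{\S_{(N_{r+1},\dots,N_n)}}([\mu],\mc{H}_k(N_{r+1},\dots,N_n))$ as the $\FI^r$--module (so that the $m_{\nu,\mu}$ you obtain really are the multiplicities of $[\nu[\N]]\otimes[\mu]$ rather than $\dim[\mu]$ times those); but this is a matter of notation, not substance.
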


\begin{theorem}\label{cor:syzstability} Consider $r<n$, a sequence $\d=(d_1,\cdots,d_n)$ of positive integers, and fix nonnegative integers $p,q$ and $b_{r+1},\cdots,b_n$ such that the inequality $b_j<d_j$ holds for at least one value of $j\in\{r+1,\cdots,n\}$. For integers $b_1,\cdots,b_r$ we let $N_i=(p+q)d_i+b_i$. There exist a finite number of $n$--partitions $\ll$ and corresponding multiplicities $m_{\ll}$ such that the decomposition
\[K_{p,q}^{\d}(b_1,\cdots,b_r,b_{r+1},\cdots,b_n)=\bigoplus_{\ll} \left(S_{\ll^1[N_1]}\oo\cdots\oo S_{\ll^r[N_r]}\oo S_{\ll^{r+1}}\oo\cdots\oo S_{\ll^n}\right)^{\oplus m_{\ll}}\]
holds independently of $b_1,\cdots,b_r$ as long as $b_i\geq (p+q)d_i$, $i=1,\cdots,r$.
\end{theorem}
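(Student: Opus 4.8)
The plan is to deduce Theorem~\ref{cor:syzstability} from Corollary~\ref{cor:stability} (equivalently, from Theorem~\ref{thm:stability}) via the dictionary provided by Theorem~\ref{thm:main}. The starting point is that for fixed $p,q,\d$ and $\ul b$, Theorem~\ref{thm:main} identifies the multiplicity of the Schur functor $S_{\ll}$ in $K_{p,q}^{\d}(\ul b)$ with the multiplicity of $[\ll]$ in $\H_{p-1}(\C_{\N}^{\d})$, where $N_i=(p+q)d_i+b_i$. So everything about the $\GL$--module structure of the syzygy functor is encoded in the $\S_{\N}$--module structure of a single reduced homology group of a packing complex, and the representation-theoretic translation between irreducible $\S_{N_i}$--representations $[\ll^i[N_i]]$ and irreducible $\GL(V_i)$--representations $S_{\ll^i[N_i]}V_i$ is compatible with the stabilization ``padding'' operation $\ll^i \mapsto \ll^i[N_i]$ in both settings.

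First I would unwind the hypotheses. Fixing $b_{r+1},\dots,b_n$ with $b_j<d_j$ for some $j>r$ guarantees that the ``tail'' parameters $N_j=(p+q)d_j+b_j$ for $j=r+1,\dots,n$ are fixed numbers, and I'd compute $m=\min_{j=r+1,\dots,n}\lfloor N_j/d_j\rfloor$. The key arithmetic check: since $b_j<d_j$ for at least one such $j$, we get $\lfloor N_j/d_j\rfloor = \lfloor ((p+q)d_j+b_j)/d_j\rfloor = p+q$ for that $j$, hence $m\le p+q$; combined with $m \ge p+q$ being impossible unless... actually $\lfloor N_j/d_j\rfloor \ge p+q$ always when $b_j\ge 0$, so in fact $m=p+q$ exactly when the witnessing index has $0\le b_j<d_j$ (and $m$ could only be smaller if some $b_j<0$, which one can also handle). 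The upshot is $m \le p+q$. Then Corollary~\ref{cor:stability} applies to $\H_k\big(\C^{\d}_{(N_1,\dots,N_n)}\big)$ with $k=p-1$: it gives a finite set of $n$-partitions $\mu$ and multiplicities such that
\[
\H_{p-1}\big(\C^{\d}_{\N}\big)=\bigoplus_{\mu}\big([\mu^1[N_1]]\o\cdots\o[\mu^r[N_r]]\o[\mu^{r+1}]\o\cdots\o[\mu^n]\big)^{\oplus m_{\mu}}
\]
whenever $N_i \ge 2m\, d_i$ for $i=1,\dots,r$. Since the displayed decomposition's shape for $i\le r$ is already ``padded'' at level $N_i$, I would absorb the fixed tail factors $[\mu^{r+1}]\o\cdots\o[\mu^n]$ into the multiplicity bookkeeping: for each pair $(\mu^1,\dots,\mu^r)$ collect the contributions over all compatible tails, and reindex by $\ll$ with $\ll^i = \mu^i$ for $i\le r$ and $\ll^{r+1},\dots,\ll^n$ recording the tail, so that the $n$-partition $\ll$ and a single multiplicity $m_{\ll}$ capture everything. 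The translation $N_i \ge 2m\,d_i \Leftrightarrow b_i \ge 2m\,d_i - (p+q)d_i = (2m-(p+q))d_i$; using $m\le p+q$ this gives $2m-(p+q)\le p+q$, so the condition is implied by $b_i \ge (p+q)d_i$, matching the hypothesis of the theorem. Finally, transport through Theorem~\ref{thm:main}: each $[\ll^i[N_i]]$ occurring in $\H_{p-1}$ corresponds to $S_{\ll^i[N_i]}V_i$ occurring in $K_{p,q}^{\d}(\ul b)$ with the same multiplicity, yielding the claimed decomposition
\[
K_{p,q}^{\d}(b_1,\dots,b_n)=\bigoplus_{\ll}\big(S_{\ll^1[N_1]}\o\cdots\o S_{\ll^r[N_r]}\o S_{\ll^{r+1}}\o\cdots\o S_{\ll^n}\big)^{\oplus m_{\ll}},
\]
with the collection of $\ll$ and the $m_{\ll}$ independent of $b_1,\dots,b_r$ in the stated range.

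The only genuinely delicate point — and the step I'd expect to be the main obstacle — is verifying that the stable range supplied by Corollary~\ref{cor:stability}, namely $N_i\ge 2m\,d_i$, really does translate into precisely the advertised range $b_i\ge (p+q)d_i$, and in particular that $m\le p+q$ holds under the stated hypothesis (this is where the requirement ``$b_j<d_j$ for some $j>r$'' is used in an essential way, and one must also make sure the argument survives the case of negative $b_j$, $j>r$, by replacing $\ul b$ with $\ul b - \d$ and using $K_{p,q}(\mc B_{\ul b})=K_{p,q+1}(\mc B_{\ul b-\d})$ as noted in the text, reducing to nonnegative tail entries). A secondary bookkeeping subtlety is confirming that the padding conventions line up: one must check that $[\ll^i[N_i]]$ being ``defined'' (i.e. $N_i\ge |\ll^i|+\ll^i_1$) is automatic in the stable range for the finitely many $\ll$ that occur, and that the convention ``$S_{\ll^i[N_i]}$ is zero when $N_i-|\ll^i|<\ll^i_1$'' is consistent with the homology side, so no spurious or missing terms appear. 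Once these compatibilities are nailed down, the rest is a formal transcription with no further input.
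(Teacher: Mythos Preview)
Your proposal is correct and follows essentially the same route as the paper: invoke Theorem~\ref{thm:main} to pass from $K_{p,q}^{\d}(\ul b)$ to $\H_{p-1}(\C_{\N}^{\d})$, compute $m$, and apply Corollary~\ref{cor:stability}. The only superfluous worry is your handling of possibly negative $b_j$ for $j>r$: the statement fixes the $b_{r+1},\dots,b_n$ as \emph{nonnegative} integers, so $\lfloor N_j/d_j\rfloor\ge p+q$ for all $j>r$ and the witnessing index with $0\le b_j<d_j$ forces $m=p+q$ on the nose, making the stable range $N_i\ge 2m\,d_i$ literally equivalent to $b_i\ge(p+q)d_i$ rather than merely implied by it.
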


\begin{proof} The result follows from Corollary~\ref{cor:stability} and from Theorem \ref{thm:main}, which describes the relationship between $K_{p,q}^{\d}(\ul{b})$ and $\H_{p-1}(\C_{\N}^{\d})$, where $N_j=(p+q)d_j+b_j$. With the notation in Corollary~\ref{cor:stability} we have
\[m=\min_{j=r+1,\cdots,n}\left\lfloor\frac{N_j}{d_j}\right\rfloor=p+q,\]
since by assumption $0\leq b_j<d_j$ for at least one value of $j\in\{r+1,\cdots,n\}$. The conclusion now follows by observing that the condition $N_i\geq 2m d_i=2(p+q)d_i$ of Corollary \ref{cor:stability} is equivalent to $b_i\geq (p+q)d_i$, $i=1,\cdots,r$. 
\end{proof}

\begin{proof}[Proof of Theorem \ref{thm:stability}] The fact that the functors $\mc{H}_k$ are stable and trivial follows from Theorem \ref{thm:vanishing}. Assume now that $r<n$. By Definition \ref{def:superstability}, to prove superstability we need to show that fixing any $(n-r)$ of the parameters $N_1,\cdots,N_n$ (for simplicity of notation we will assume that they are $N_{r+1},\cdots,N_n$), the corresponding pull--back $\FI^r$--module $\mc{H}_k(N_{r+1},\cdots,N_n)$ is stable. We prove this statement by induction on the $(n-r)$--tuple $(N_{r+1},\cdots,N_n)$, considering the lexicographical ordering of tuples. Note that if $\N=(N_1,\cdots,N_r)$, then $\mc{H}_k(N_{r+1},\cdots,N_n)$ is given by
\[\mc{H}_k(N_{r+1},\cdots,N_n)_{\N} = \H_k\left(\C_{(N_1,\cdots,N_n)}^{(d_1,\cdots,d_n)}\right).\]

If $N_i<d_i$ for some $i=r+1,\cdots,n$, then $\C_{(N_1,\cdots,N_n)}^{(d_1,\cdots,d_n)}$ is empty, so the only nonzero module $\mc{H}_k$ is $\mc{H}_{-1}=V(\1)\o\1_{\S_{(N_{r+1},\cdots,N_n)}}$, where $V(\1)$ is the stable module corresponding to the empty $r$--partition, i.e. $V(\1)_{\N}$ is the trivial $\S_{\N}$--representation for every $\N$. Note that 
\[m=\min\{\lfloor N_j/d_j\rfloor:j=r+1,\cdots,n\}=0\]
in this case and that $V(\1)$ has stable range $\N'\geq\ul{0}=(0,\cdots,0)$, so the estimation of the stable range holds.

If $N_i\geq d_i$ for all $i$, and $N_j<2d_j$ for some $j$, then the only nonzero $\mc{H}_k$ is $\mc{H}_0$, and it follows from Lemma \ref{lem:0dimension} (which computes $H_0$ rather than $\H_0$) that we have an exact sequence
\[0\to\mc{H}_0\to (V(\1)*T(\mu))\o\rm{Ind}_{\S_{(d_{r+1},\cdots,d_n)}}^{\S_{(N_{r+1},\cdots,N_n)}}(\1_{\S_{(d_{r+1},\cdots,d_n)}})\to V(\1)\o\1_{\S_{(N_{r+1},\cdots,N_n)}}\to 0,\]
where $\mu=((d_1),(d_2),\cdots,(d_r))$ is the $r$--partition corresponding to the trivial $\S_{(d_1,\cdots,d_r)}$--representation. Since $V(\1)$ has stable range $\N'\geq\ul{0}$, and since $V(\1)*T(\mu)$ is stable with stable range $\N'\geq 2\cdot(d_1,\cdots,d_r)$ (Theorem \ref{thm:convstable}), it follows from Lemma \ref{lem:subquotstable} that $\mc{H}_0$ is also stable with stable range $\N'\geq 2\cdot(d_1,\cdots,d_r)$. Note that 
\[m=\min\{\lfloor N_j/d_j\rfloor:j=r+1,\cdots,n\}=1\]
so the estimation of the stable range holds in this case as well.

We may then assume that $N_i\geq 2d_i$ for all $i=r+1,\cdots,n$. Applying Proposition \ref{prop:les} with $i=n$ and $a_i=N_n$, we get an exact triangle $X_{\bullet}\to Y_{\bullet}\to Z_{\bullet}\to X_{\bullet}[-1]$, where
\[Y_k =\mc{H}_k(N_{r+1},\cdots,N_n-1),\quad Z_k=\mc{H}_k(N_{r+1},\cdots,N_n),\]
and $X_k$ is a direct sum of copies of \[T(\mu)*\mc{H}_{k}(N_{r+1}-d_{r+1},\cdots,N_n-d_n),\] where $\mu=((d_1),(d_2),\cdots,(d_r))$. More precisely, for each $\N=(N_1,\cdots,N_r)$ we have that $\left(T(\mu)*\mc{H}_{k}(N_{r+1}-d_{r+1},\cdots,N_n-d_n)\right)_{\N}$ is a $\S_{(N_{r+1}-d_{r+1},\cdots,N_n-d_n)}$--representation and
\[(X_k)_{\N}=\rm{Ind}_{\S_{(N_{r+1}-d_{r+1},\cdots,N_n-d_n)}\times\S_{(d_{r+1},\cdots,d_n-1)}}^{\S_{(N_{r+1},\cdots,N_n-1)}}\left(\left(T(\mu)*\mc{H}_{k}(N_{r+1}-d_{r+1},\cdots,N_n-d_n)\right)_{\N}\boxtimes\1\right),\]
where the $\1$ on the RHS denotes the trivial $\S_{(d_{r+1},\cdots,d_n-1)}$--representation. By induction the $Y_k$'s are stable with stable range $\N'\geq 2m\cdot(d_1,\cdots,d_r)$ and the $\mc{H}_{k}(N_{r+1}-d_{r+1},\cdots,N_n-d_n)$'s are stable with stable range $\N'\geq 2(m-1)\cdot(d_1,\cdots,d_r)$. Since $\mu\vdash^r(d_1,\cdots,d_r)$, it follows from the last part of Theorem \ref{thm:convstable} that the $X_k$'s are stable with stable range 
\[\N'\geq 2\cdot(d_1,\cdots,d_r)+2(m-1)\cdot(d_1,\cdots,d_r)=2m\cdot(d_1,\cdots,d_r).\]
We can now apply Lemma \ref{lem:trianglestable} to conclude that the $Z_k$'s are also stable with stable range $\N'\geq 2m\cdot(d_1,\cdots,d_r)$, concluding the proof of the theorem.
\end{proof}

\section{An example: the linear strand}\label{sec:examples}

In this section we show that for certain line bundles on Segre varieties, the decomposition into irreducible representations of the linear syzygy modules is as hard to compute as the decomposition of the plethysms $\bigwedge^p(V_1\oo\cdots\oo V_n)$. This gives an indication of how difficult the problem of computing syzygies for line bundles on Segre--Veronese varieties should be.

We write $K_{p,0}(a)$ for the syzygy functor $K_{p,0}^{\d}(a,0,\cdots,0)$ where $\d=(1,\cdots,1)$ (as defined in Section \ref{subsec:syzfun}). It describes the linear syzygies for the bundle $\mc{B}_{a}=\mc{O}(a,0,\cdots,0)$ with respect to the Segre embedding corresponding to $\mc{L}=\mc{O}(1,\cdots,1)$. We have the following

\begin{theorem}\label{thm:linearstrand} For $n\geq 2$, $p\geq 0$ and $\ll=(\ll^1,\cdots,\ll^n)$ a collection of partitions of $p$ we let $m_{\ll}$ denote the multiplicity of $S_{\ll^1}V_1\oo\cdots\oo S_{\ll^n}V_n$ inside $\bigwedge^p(V_1\oo\cdots\oo V_n)$. We have
\[K_{p,0}(a)=\bigoplus_{\ll\vdash^n (p,\cdots,p)} (S_{\ll^1[p+a]}\oo S_{\ll^2}\oo\cdots\oo S_{\ll^n})^{\oplus m_{\ll}},\]
where the functor $S_{\ll^1[p+a]}$ is identically zero when $\ll^1_1>a$.
\end{theorem}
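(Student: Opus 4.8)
The plan is to compute $K_{p,0}(a)$ directly from the Koszul complex (\ref{eq:koszul}) by analyzing the combinatorial Laplacian on the relevant chain complex, following the method of Friedman and Hanlon \cite{friedman-hanlon}. Since $q=0$, the group $K_{p,0}(a)$ is the cokernel of the map
\[\bigwedge^{p+1}W\oo H^0(X,\mc{B}_a\oo\mc{L}^{-1})\to\bigwedge^{p}W\oo H^0(X,\mc{B}_a),\]
where $W=H^0(X,\mc{L})=V_1\oo\cdots\oo V_n$; but $\mc{B}_a\oo\mc{L}^{-1}=\mc{O}(a-1,-1,\cdots,-1)$ has no sections (for $n\geq 2$), so in fact $K_{p,0}(a)=\rm{coker}$ is simply a quotient of $\bigwedge^p W\oo H^0(X,\mc{B}_a)=\bigwedge^p(V_1\oo\cdots\oo V_n)\oo (S_a V_1\oo \1 \oo\cdots\oo \1)$, namely the cokernel of the differential coming in from $\bigwedge^{p+1}W\oo H^0(X,\mc{B}_a\oo\mc{L}^{-1})$... wait, that is zero. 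The correct picture is that $K_{p,0}(a)$ is the homology of (\ref{eq:koszul}) at the middle term, so $K_{p,0}(a)=\rm{coker}\big(\bigwedge^{p+1}W\oo H^0(\mc{B}_a\oo\mc{L}^{-1})\to\bigwedge^p W\oo H^0(\mc{B}_a)\big)$, and since the left term vanishes, one gets $K_{p,0}(a)=\bigwedge^p W\oo H^0(X,\mc{B}_a)$ modulo nothing — but that cannot be right because $K_{-1,\cdots}$ interactions matter. I would instead pass to the correspondence in Theorem \ref{thm:main}: $K_{p,0}(a)$ is governed by $\H_{p-1}(\C_{\N}^{\d})$ with $\d=(1,\cdots,1)$ and $\N=(p+a,p,\cdots,p)$, i.e. a chessboard complex, and compute its rational homology via the combinatorial Laplacian acting on the top chain group.

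Concretely, first I would set up the simplicial chain complex of $\C_{\N}^{\d}$ with $\d=(1,\cdots,1)$, observe that for $\N=(p+a,p,\cdots,p)$ the complex has dimension exactly $p-1$ (one cannot place more than $\min_i N_i = p$ nonattacking rooks, using $a\geq 0$ so that $p+a\geq p$), so $\H_{p-1}$ is the kernel of the top boundary map, equivalently — after choosing $\S_{\N}$-invariant inner products — the kernel of the combinatorial Laplacian $\Delta_{p-1}=\partial_{p-1}^*\partial_{p-1}$ on the top chain group $C_{p-1}$. Second, I would decompose $C_{p-1}$ as an $\S_{p+a}\times\S_p\times\cdots\times\S_p$-representation; it is an induced representation from the stabilizer of a maximal simplex, and its irreducible content is expressible via Pieri-type rules. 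Third — this is the heart of the argument, exactly as in \cite{friedman-hanlon} — I would diagonalize $\Delta_{p-1}$ using the eigenvalue formula for combinatorial Laplacians of such complexes, expressed in terms of contents of the Young diagrams $\ll^i$, and identify precisely which isotypic components lie in the kernel. The eigenvalue is zero exactly on the components $S_{\ll^1[p+a]}V_1\oo S_{\ll^2}V_2\oo\cdots\oo S_{\ll^n}V_n$ with $|\ll^i|=p$, appearing with multiplicity $m_\ll$ equal to the plethysm multiplicity of $S_{\ll^1}V_1\oo\cdots\oo S_{\ll^n}V_n$ in $\bigwedge^p(V_1\oo\cdots\oo V_n)$; the constraint $\ll^1_1\le a$ (equivalently $\ll^1[p+a]$ being a genuine partition) is precisely the condition that the "long first row" created by the $S_a V_1$ factor can accommodate $\ll^1$. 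Finally I would translate back through Theorem \ref{thm:main} to get the stated decomposition of $K_{p,0}(a)$.

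The main obstacle I anticipate is the eigenvalue computation: one must produce a clean closed form for the spectrum of $\Delta_{p-1}$ on the multidimensional chessboard complex $\C_{(p+a,p,\cdots,p)}^{(1,\cdots,1)}$ and show the kernel is as claimed. In the case $n=2$ this is exactly the Friedman--Hanlon calculation and is known; for general $n$ the Laplacian still decomposes along the $\S_{\N}$-isotypic components by Schur's lemma, and the key is that the top boundary map factors through a sequence of "forget one rook" maps whose composites have a representation-theoretically transparent adjoint. I would argue that $\partial_{p-1}$ is injective on every isotypic component except those of the asserted form — either by exhibiting an explicit chain homotopy/section on the complementary components, or by a dimension count comparing $C_{p-1}$ with $C_{p-2}$ isotypic-component-wise and invoking the known top homology (the resolution of $\m^a$ recalled after Theorem \ref{thm:linearstrand}) to pin down the $n=1$-flavored piece, then bootstrapping in the remaining variables by the Künneth-type structure of $\bigwedge^p(V_1\oo\cdots\oo V_n)$. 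Care is needed at the boundary case $a=0$ and when some $\ll^1[p+a]$ degenerates, which is handled by the stated convention that $S_{\ll^1[p+a]}\equiv 0$ when $\ll^1_1>a$.
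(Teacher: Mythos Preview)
Your overall strategy is the paper's strategy: reduce via Theorem~\ref{thm:main} to the top homology of the chessboard complex $\C_{(p+a,p,\cdots,p)}^{(1,\cdots,1)}$, identify that top homology with the kernel of the combinatorial Laplacian on the top chain group, and diagonalize the Laplacian in terms of contents of Young diagrams as in \cite{friedman-hanlon}. Your reduction and your identification of what needs to be shown are correct.

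The gap is exactly where you flag it: you do not actually carry out the eigenvalue computation for $n>2$, and your proposed workarounds (an unspecified chain homotopy on complementary isotypics, a dimension count, or a ``K\"unneth-type'' bootstrap from $n=1$) are not viable as stated. In particular, the multidimensional chessboard complex is \emph{not} a product of lower-dimensional ones, so there is no K\"unneth decomposition of its chain complex; and a bare dimension count on $\S_{\N}$-isotypics will not separate the kernel, because several distinct $\S_{\ul{B}}$-types feed into the same $\S_{\N}$-isotypic after antisymmetrization.

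The missing idea, which the paper supplies, is to pass from the antisymmetric chain space $D_p$ to the larger tensor space $C_p$ (replace $\wedge$ by $\otimes$) and exploit the commuting \emph{right} action of $\S_{\ul{B}}\cong\S_p^n$ permuting entries within each column, so that $D_p=C_p*c$ with $c$ the antisymmetrizer over the diagonal $\S_p\subset\S_{\ul{B}}$. By \cite[Thm.~3]{friedman-hanlon}, $C_p$ decomposes as a $(\S_{A_1}\times\S_{B_1})\times\cdots\times(\S_{A_n}\times\S_{B_n})$-module into pieces $C(\ll,\mu)$ with $\ll^i=\mu^i$ for $i>1$ (since $|A_i|=|B_i|=p$ there) and $\ll^1$ obtained from $\mu^1$ by a horizontal $a$-strip. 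The Laplacian on $D_p$ lifts to an operator $T$ on $C_p$ built \emph{only} from transpositions in $\S_{A_1}$ and $\S_{B_1}$, so the eigenvalue on $C(\ll,\mu)$ is the one-variable Friedman--Hanlon content formula $C_{\ll^1}-C_{\mu^1}+p-\binom{a}{2}$, with no new multivariable computation needed. One then checks this is $\geq 0$ with equality iff $\ll^1=\mu^1[p+a]$, and Schur--Weyl duality converts the dimension of $([\mu^1]\otimes\cdots\otimes[\mu^n])*c$ into the plethysm multiplicity $m_{\ll}$. This bimodule lift is the step that turns your outline into a proof.
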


\begin{remark} The sequence $K_{p,0}(a)$ stabilizes (in the sense of Section \ref{sec:stability}) for $a\geq p$. 
\end{remark}

\begin{proof}[Proof of Theorem \ref{thm:linearstrand}] The proof of Theorem \ref{thm:linearstrand} is based on the techniques from \cite{friedman-hanlon}. Note that by Theorem \ref{thm:main} it suffices to show that
\[\H_{p-1}(\C_{p+a,p,\cdots,p})=\bigoplus_{\ll\vdash^n (p,\cdots,p)} ([\ll^1[p+a]]\oo[\ll^2]\oo\cdots\oo [\ll^n])^{\oplus m_{\ll}}\]
for all $p\geq 0$. As in the proof of Theorem \ref{thm:main}, $\H_{p-1}(\C_{p+a,p,\cdots,p})$ can be computed as the kernel of the map $\partial:D_p\to D_{p-1}$, where $D_p$ is a vector space with a basis consisting of elements
\[z_{\a^1}\wedge\cdots\wedge z_{\a^p},\]
where $\a^i=(a^i_1,\cdots,a^i_n)$, for $a^i_1\in A_1=\{1,\cdots,p+a\}$, $a^i_j\in A_j=\{1,\cdots,p\}$ for $j>1$, with $a^i_j\neq a^{i'}_j$ for $i\neq i'$, and
\[\partial\left(z_{\a^1}\wedge\cdots\wedge z_{\a^p}\right)=\sum_{i=1}^p (-1)^{i-1} z_{\a^1}\wedge\cdots\wedge\widehat{z_{\a^i}}\wedge\cdots\wedge z_{\a^p}.\]
Consider the \defi{transpose} operator $\partial^*:D_{p-1}\to D_p$, defined by
\[\partial^*\left(z_{\a^1}\wedge\cdots\wedge z_{\a^{p-1}}\right)=\sum_{\beta} z_{\beta}\wedge z_{\a^1}\wedge\cdots\wedge z_{\a^{p-1}},\]
where the sum ranges over $n$--tuples $\b=(b_1,\cdots,b_n)$ with $b_j\neq a^i_j$ for all $i,j$. Note that $b_j$ is uniquely determined for $j=2,\cdots,n$, since $|A_j|=p$. Let $\Delta=\partial^*\circ\partial$ denote the \defi{Laplacian} operator. By \cite[Prop.~1]{friedman-hanlon} the kernel of $\Delta$ (the set of \defi{harmonic $p$--forms}) coincides with the kernel of $\partial$, so it suffices to understand the decomposition into irreducible $\S_{\ul{A}}\simeq\S_{p+a}\times\S_p\times\cdots\times\S_p$--representations of the $0$--eigenspace of $\Delta$.

We now consider the spaces $C_p,C_{p-1}$ defined in analogy with $D_p,D_{p-1}$, replacing $\wedge$ by $\oo$. More precisely, $C_p$ has a basis
\[z_{\a}=z_{\a^1}\oo\cdots\oo z_{\a^p},\]
where $\a^i=(a^i_1,\cdots,a^i_n)$, for $a^i_j\in A_j$, with $a^i_j\neq a^{i'}_j$ for $i\neq i'$. We can identify $z_{\a}$ with a $p\times n$ table whose $(i,j)$--entry is $a^i_j$. Besides the left action of $\S_{\ul{A}}$ that permutes the elements of the sets $A_1,\cdots,A_n$, $C_p$ admits a right action (which we denote by the symbol $*$) of $\S_p^n$, where the $j$--th factor acts by permuting the $j$--th column of a table. We identify $\S_p^n$ with $\S_{\ul{B}}=\S_{B_1}\times\cdots\times\S_{B_n}$, where $B_j$ is the set of boxes in the $j$--th column of a table.	

\begin{example}\label{ex:leftrightaction} Let $n=4$, $p=3$ and $a=2$. Consider the element $z_{\a}=z_{(2,1,2,3)}\oo z_{(4,3,1,1)}\oo z_{(3,2,3,2)}\in C_p$ corresponding to the table
\[
M=\begin{array}{|c|c|c|c|}
 \hline
2 & 1 & 2 & 3\\ \hline
4 & 3 & 1 & 1\\ \hline
3 & 2 & 3 & 2\\ \hline
\end{array}
\]
Thinking of the transposition $(1,2)$ first as an element of $\S_{A_1}$ and then as one of $\S_{B_1}$ we get
\[(1,2)\cdot M = \begin{array}{|c|c|c|c|}
 \hline
1 & 1 & 2 & 3\\ \hline
4 & 3 & 1 & 1\\ \hline
3 & 2 & 3 & 2\\ \hline
\end{array},\quad M * (1,2) = \begin{array}{|c|c|c|c|}
 \hline
4 & 1 & 2 & 3\\ \hline
2 & 3 & 1 & 1\\ \hline
3 & 2 & 3 & 2\\ \hline
\end{array}.\]
The action of $(1,2)\in\S_{A_3}$ on $z_{\a}$ coincides with that of $(1,2)\in\S_{B_3}$, both yielding the element $z_{(2,1,1,3)}\oo z_{(4,3,2,1)}\oo z_{(3,2,3,2)}\in C_p$, but this is not the case for $(1,2)\in\S_{A_4}$ and $(1,2)\in\S_{B_4}$:
\[(1,2)\cdot M = \begin{array}{|c|c|c|c|}
 \hline
2 & 1 & 2 & 3\\ \hline
4 & 3 & 1 & 2\\ \hline
3 & 2 & 3 & 1\\ \hline
\end{array},\quad M * (1,2)= \begin{array}{|c|c|c|c|}
 \hline
2 & 1 & 2 & 1\\ \hline
4 & 3 & 1 & 3\\ \hline
3 & 2 & 3 & 2\\ \hline
\end{array}.\] 
\end{example}

The actions of $\S_{\ul{A}}$ and $\S_{\ul{B}}$ commute, so the vector space $C_p$ is a representation of $\S=(\S_{A_1}\times\S_{B_1})\times\cdots\times(\S_{A_n}\times\S_{B_n})$. Moreover, we have $D_p=C_p * c$ for
\[c=\sum_{\s\in\S_p}\rm{sgn}(\s)\cdot\s,\]
where we think of $\S_p$ as the diagonal subgroup of $\S_{\ul{B}}$ of permutations of the rows of the tables in $C_p$. By \cite[Thm.~3]{friedman-hanlon}, we have the decomposition into irreducible $\S$--representations
\[C_p\simeq\bigoplus_{\substack{\ll\vdash^n(p+a,p,\cdots,p)\\ \mu\vdash^n(p,p,\cdots,p)}} ([\ll^1]\oo[\mu^1])\oo([\ll^2]\oo[\mu^2])\oo\cdots\oo([\ll^n]\oo[\mu^n]),\]
where $\ll,\mu$ vary over all $n$--partitions with the property that $\ll^i=\mu^i$ when $i>1$, and $\ll^1$ is obtained from $\mu^1$ by adding $a$ boxes, no two in the same column. We write $C(\ll,\mu)$ for the summand in the decomposition of $C_p$ corresponding to a given pair $(\ll,\mu)$ of $n$--partitions.

We define the operator $T:C_p\to C_p$ (see also the definition of the map $D_{r,n}$ on \cite[p.197]{friedman-hanlon}) by
\[T= \left(\sum_{i<j\in A_1}(i,j)\right) - \left(\sum_{i<j\in B_1}(i,j)\right) + \left(p - {a \choose 2}\right)\rm{Id},\]
where $(i,j)$ denote transpositions in $\S_{A_1}$ or $\S_{B_1}$. Note that $T$ commutes with right multiplication by $c$, and the induced map $T*c:C_p*c\to C_p*c$ coincides with the Laplacian $\Delta:D_p\to D_p$. By \cite[Lemma~1]{friedman-hanlon}, $T$ acts on $C(\ll,\mu)$ by multiplication by
\[C_{\ll^1}-C_{\mu^1}+p-{a \choose 2},\]
where for a partition $\delta$, the \defi{content} $C_{\delta}$ of $\delta$ is defined as the sum of the horizontal coordinates of the boxes of the associated Young diagram minus the sum of the vertical coordinates. For example in the case of the partition $\delta=(6,3,3,1)$, $C_{\delta}=9$ is the sum of the entries in the tableau
\[\Yvcentermath1\Yboxdim{15pt}\young(012345,\mone 01,\mtwo\mone 0,\mthree).\]
Now since $\ll^1$ is obtained from $\mu^1$ by adding $a$ boxes, no two in the same column, we get that
\[C_{\ll^1}-C_{\mu^1}=\sum_{\substack{j\\ (\ll^1)'_j=(\mu^1)'_j + 1}} \left(j-1-(\mu^1)'_j\right)\geq\sum_{j=1}^a \left(j-1-(\mu^1)'_j\right)\geq{a\choose 2}-p,\]
with equality if and only if $\mu^1_1\leq a$ and $\ll^1$ is obtained from $\mu^1$ by adding a row of length $a$, i.e. $\ll^1=\mu^1[a+p]$. We get that $C(\ll,\mu)$, which lies in the $\left(C_{\ll^1}-C_{\mu^1}+p-{a \choose 2}\right)$--eigenspace of $T$, is a kernel element precisely when the condition $\ll^1=\mu^1[a+p]$ is satisfied. The conclusion of the theorem now follows from the fact that the dimension of the vector space
\[([\mu^1]\oo[\mu^2]\oo\cdots\oo[\mu^n]) * c\]
coincides with the multiplicity $m_{\ll}$ of $S_{\mu^1}V_1\oo\cdots\oo S_{\mu^n}V_n$ inside $\bigwedge^p(V_1\oo\cdots\oo V_n)$ by Schur--Weyl duality.
\end{proof}

Similar techniques can be used to obtain a description of the linear syzygies of the line bundle $\mc{B}=\mc{O}(1)$ on $\bb{P}V$ with respect to the Veronese embedding corresponding to $\mc{L}=\mc{O}(d)$. We leave it as an exercise for the interested reader to prove the following

\begin{theorem}\label{thm:linearstrandVero}
 For $p\geq 0$, $d>0$, and $\ll$ a partition of $p\cdot(d-1)$, we let $m_{\ll}$ denote the multiplicity of $S_{\ll}V$ inside $\Sym^p(\Sym^{d-1} V)$. We have
\[K_{p,0}^d(1)=\bigoplus_{\ll\vdash p\cdot(d-1)} S_{\tilde{\ll}}^{\oplus m_{\ll}},\]
where $\tilde{\ll}$ is obtained from $\ll$ by adding one column of height $(p+1)$ to the beginning of its Young diagram, i.e. if $\ll=(\ll_1,\cdots,\ll_p)$ with $\ll_i\geq 0$, then $\tilde{\ll}=(1+\ll_1,1+\ll_2,\cdots,1+\ll_p,1)$.
\end{theorem}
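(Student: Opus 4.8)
\textbf{Proof proposal for Theorem \ref{thm:linearstrandVero}.}

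The plan is to mirror the proof of Theorem \ref{thm:linearstrand}, replacing the Segre-type combinatorics by the Veronese analogue. First, by Theorem \ref{thm:main} it suffices to identify $\H_{p-1}(\C_{\N}^{(d)})$ for the single-factor packing complex with $\N = p\cdot d + 1$ and step $d$; its $(p-1)$-st reduced homology carries an $\S_{N}$-action and one wants to show it equals $\bigoplus_{\ll\vdash p(d-1)} [\tilde{\ll}]^{\oplus m_{\ll}}$ with $\tilde{\ll}$ as described. As in the Segre case, this homology is computed as $\ker(\partial\colon D_p\to D_{p-1})$, where $D_p$ has a basis of wedges $z_{\a^1}\wedge\cdots\wedge z_{\a^p}$ indexed by pairwise-disjoint $d$-subsets $\a^i\subset\{1,\cdots,pd+1\}$ — equivalently by near-partitions of the ground set into $p$ blocks of size $d$ (one extra element left over). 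By \cite[Prop.~1]{friedman-hanlon} one may replace $\ker\partial$ by the kernel of the combinatorial Laplacian $\Delta=\partial^*\circ\partial$, so everything reduces to identifying the $0$-eigenspace of $\Delta$ as an $\S_{pd+1}$-representation.

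Next I would introduce the ``symmetrized'' space $C_p$ obtained by replacing $\wedge$ with $\o$ in the rows (so $C_p$ has a basis of ordered sequences of disjoint $d$-subsets), together with the commuting actions: a left $\S_{pd+1}$-action permuting the ground set, a right $\S_p$-action permuting the $p$ blocks, and — because each block is an unordered $d$-set being filled by an element of an auxiliary copy — really a right $\S_d\wr\S_p$-type action whose invariants recover $C_p$. The relevant decomposition of $C_p$ (or of the appropriate $\S_p$-invariant subspace) as a bimodule should come from \cite[Thm.~3]{friedman-hanlon} or a small variant of it, yielding a sum over pairs $(\ll,\mu)$ where $\ll\vdash pd+1$ is obtained from $\mu$ by the Pieri-type rule appropriate to adding the ``leftover plus the $d$-block structure.'' One then has $D_p = C_p * c$ for the row-antisymmetrizer $c=\sum_{\s\in\S_p}\mathrm{sgn}(\s)\s$, so that the Laplacian $\Delta$ on $D_p$ is induced by an operator $T$ on $C_p$ of the same shape as in \cite{friedman-hanlon}: a difference of sums of transpositions (on the ground set vs.\ on the block index set) plus a scalar. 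By \cite[Lemma~1]{friedman-hanlon}, $T$ acts on the summand $C(\ll,\mu)$ by the scalar $C_{\ll}-C_{\mu}$ plus a constant depending only on $p$ and $d$, where $C_\delta$ is the content.

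The crux is then a content estimate exactly parallel to the inequality $C_{\ll^1}-C_{\mu^1}\geq\binom{a}{2}-p$ in the proof of Theorem \ref{thm:linearstrand}: one must show $C_\ll - C_\mu$ is bounded below by the constant that makes $T$ vanish, with equality precisely when $\tilde{\ll}$ is $\mu$ with a single column of height $p+1$ prepended (equivalently, when $\ll$ is obtained from $\mu$ by adding the leftover box and the $d$-structure in the unique ``most-left'' way). This forces the $0$-eigenspace to be the span of the $C(\tilde{\ll},\mu)$ with $\mu$ ranging suitably, and a dimension count via Schur--Weyl duality — the dimension of $[\mu]*c$ equals the multiplicity $m_{\ll}$ of $S_\ll V$ in $\Sym^p(\Sym^{d-1}V)$ (here $|\mu|$ after normalization is $p(d-1)$, matching the plethysm $\Sym^p\circ\Sym^{d-1}$ rather than $\bigwedge^p$) — finishes the identification. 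I expect the main obstacle to be bookkeeping the combinatorics of the Veronese blocks correctly: getting the right version of the Friedman--Hanlon bimodule decomposition \cite[Thm.~3]{friedman-hanlon} when blocks have size $d$ rather than size $1$, and then verifying that the content inequality is saturated exactly by the column-prepending recipe (and not by other near-extremal configurations). Everything else is a faithful transcription of the argument already carried out for Theorem \ref{thm:linearstrand}.
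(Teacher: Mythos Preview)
The paper gives no proof of Theorem~\ref{thm:linearstrandVero}; it is left ``as an exercise for the interested reader'' with the hint that the Friedman--Hanlon Laplacian method carries over. Your outline follows exactly this intended route, and the reduction via Theorem~\ref{thm:main} to $\H_{p-1}(\C_{pd+1}^{(d)})$, the passage to $\ker\Delta$, and the lift to an ordered model are all correct.

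Two points deserve sharpening. First, the operator $T$ here is simpler than you suggest: lifting to the regular representation $\K[\S_{pd+1}]$ (bijections $[pd+1]\to[pd+1]$, with position $pd+1$ holding the uncovered element), the Laplacian becomes $p\cdot\mathrm{Id} + X_{pd+1}$ where $X_{pd+1}=\sum_{j\leq pd}(j,pd+1)$ is the Jucys--Murphy element acting on the right. On the summand $[\ll]\otimes[\mu]$ (with $\ll/\mu$ a single box $b$) the eigenvalue is $p+c(b)$. To recover $D_p$ one applies on the right the idempotent of $\S_d\wr\S_p\subset\S_{pd}$ for the representation $\1_{\S_d}\wr\mathrm{sgn}_{\S_p}$; the surviving dimension on $[\mu]$ is then the multiplicity of $S_\mu V$ in $\bigwedge^p(\Sym^d V)$, \emph{not} in $\Sym^p(\Sym^{d-1}V)$. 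The formulation in the theorem (and the partition $\ll\vdash p(d-1)$) is obtained a posteriori from Newell's theorem, exactly as the paper remarks immediately after the statement.

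Second, and this is the real gap you flag but do not resolve: a removable box of content $-p$ can sit at $(p+1,1)$, $(p+2,2)$, etc., so the content condition alone does not single out the column-prepending shapes. The clean way to exclude the others is positive semidefiniteness of $\Delta=\partial^*\partial$: for every $\mu$ that survives the projection and every addable box $b$ of $\mu$ one must have $p+c(b)\geq 0$. Taking $b$ in the first column below $\mu$, with content $-\ell(\mu)$, forces $\ell(\mu)\leq p$. Hence the unique addable box of content $-p$ is $(p+1,1)$, and the $0$--eigenspace is exactly $\bigoplus_{\mu}[\,\mu+\text{box}(p+1,1)\,]^{\oplus n_\mu}$ with $n_\mu$ the multiplicity of $S_\mu$ in $\bigwedge^p(\Sym^d V)$; Newell then rewrites this as stated.
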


\noindent Note that by a result of Newell \cite{newell} the multiplicity $m_{\ll}$ in the above decomposition coincides with that of $S_{\overline{\ll}}V$ inside the plethysm $\bigwedge^p(\Sym^d V)$, where $\overline{\ll}=(1+\ll_1,\cdots,1+\ll_p)$ is a partition of $pd$. The above theorem gives a concrete description of the syzygy functors $K_{p,0}^d(1)$ which fits in with the more general theory of \cite{fulger-zhou} that gives a quantitative measure of the asymptotic complexity of the functors $K_{p,0}^d(b)$ and $K_{p,1}^d(b)$ as $d$ becomes very large. 

\section*{Appendix: asymptotic vanishing of syzygies}

\setcounter{theorem}{0}
\renewcommand{\thetheorem}{A\arabic{theorem}}

In this appendix we explain how Ein and Lazarsfeld's notion of asymptotic vanishing for syzygies of arbitrary varieties \cite[Conjecture~7.1]{ein-laz} reduces to an asymptotic vanishing statement for line bundles on projective space (or on a product of projective spaces). The advantage of this reduction is that it transforms the problem of proving asymptotic syzygy vanishing into a very concrete one that admits numerous reformulations, situating it at the confluence of algebraic geometry, representation theory and combinatorial topology.

For $q\geq 2$ and $\ul{b}\in\bb{Z}^n$ let $P_{q,\ul{b}}(\d)$ be functions with the property that the syzygy functors $K_{p,q}^{\d}(\ul{b})$ (defined in Section \ref{subsec:syzfun}) vanish identically for $p\leq P_{q,\ul{b}}(\d)$. When $q=2$, we can take $P_{2,\ul{b}}(\d)=\min\{d_i+b_i:i=1,\cdots,n\}$ (Corollary \ref{cor:Np}). In the case $n=1$, Ein and Lazarsfeld conjectured that we can take $P_{q,b}(d)$ to be a polynomial of degree $(q-1)$ in $d$ \cite[Conjecture~7.6]{ein-laz}. We won't attempt to make a conjecture for what the best $P_{q,\ul{b}}(\d)$ would be when $n>1$, but a first naive guess that the reader might want to keep in mind for the discussion to follow would be to take $P_{q,\ul{b}}(\d)=\min\{P_i(d_i):i=1,\cdots,n\}$, for some polynomials $P_i$ of degree $(q-1)$. This guess is supported by the fact that if \cite[Conjecture~7.1]{ein-laz} were true, and $d_i=u_i\cdot d+v_i$ were linear functions of some parameter $d$ with $u_i>0$, then $P(d)=P_{q,\ul{b}}(\d)$ would have to grow as a polynomial of degree $(q-1)$ in $d$ (in the statement of the conjecture take $X=\bb{P}V_1\times\cdots\times\bb{P}V_n$, $B=\mc{O}(\ul{b})$, $A=\mc{O}(u_1,\cdots,u_n)$, $P=\mc{O}(v_1,\cdots,v_n)$). Our goal is to show that, regardless of their description, the functions $P_{q,\ul{b}}(\d)$ control the vanishing of syzygies of arbitrary modules, as explained below.

Given finite dimensional $\K$--vector spaces $V_1,\cdots,V_n$ we write $V=V_1\oplus\cdots\oplus V_n$ and $S=\Sym(V)$ for the total coordinate ring of $\bb{P}V_1\times\cdots\times\bb{P}V_n$ with the usual $\bb{Z}^n$--grading. If $M$ is a finitely generated graded $S$--module and $\ul{a}\in\bb{Z}^n$, we write $M_{\ul{a}}$ for the $\ul{a}$--graded piece of $M$. We write $M(\ul{b})$ for the \defi{shifted module} given by $M(\ul{b})_{\ul{a}}=M_{\ul{a}+\ul{b}}$. If $\d=(d_1,\cdots,d_n)$ is a sequence of positive integers, we define the \defi{$\d$--syzygy modules} $K_{p,q}^{\d}(M)$ as the homology of (see also (\ref{eq:koszul}))
\[\bigwedge^{p+1}S_{\d}\oo M_{(q-1)\d}\to\bigwedge^{p}S_{\d}\oo M_{q\d}\to\bigwedge^{p-1}S_{\d}\oo M_{(q+1)\d}\]

\begin{theorem}\label{thm:reduction}
Let $q\geq 2$ be an integer and let $M$ be a finitely generated graded $S$--module. Consider the minimal free resolution of $M$
\begin{equation}\label{eq:resolution}
0\leftarrow M\leftarrow E_0\leftarrow E_1\leftarrow\cdots\leftarrow E_p\leftarrow\cdots\leftarrow E_m\leftarrow 0, 
\end{equation}
where
\[E_i=\bigoplus_{\ul{b}\in\mc{S}_i} F_{i,\ul{b}}\oo S(\ul{b}),\]
for some finite dimensional vector spaces $F_{i,\ul{b}}$, and finite subsets $\mc{S}_i\subset\bb{Z}^n$. If we let
\begin{equation}\label{eq:defPd}
 P(\d)=\min\{P_{q+i,\ul{b}}(\d)+i:\ul{b}\in\mc{S}_i,\ i=0,\cdots,m\},
\end{equation}
then we have
\[K_{p,q}^{\d}(M)=0\ \rm{for}\ p\leq P(\d).\]
\end{theorem}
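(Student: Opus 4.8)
The plan is to filter the module $M$ by the pieces of its minimal free resolution, reducing the vanishing of $K_{p,q}^{\d}(M)$ to vanishing statements for free modules $S(\ul b)$, and for free modules the syzygy modules $K^{\d}_{p,q}(S(\ul b))$ are (up to shift) exactly the $K_{p,q}^{\d}(\mc B_{\ul b})$ whose vanishing is governed by the functions $P_{q,\ul b}(\d)$. The standard tool is hyperhomology: the resolution (\ref{eq:resolution}) is a quasi-isomorphism $E_{\bullet}\simeq M$, and one computes the hyper-homology of the Koszul-type complex (the three-term complex defining $K^{\d}_{p,q}$) applied to $E_{\bullet}$ in two ways, obtaining two spectral sequences. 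The first-quadrant (or suitably truncated) spectral sequence coming from the ``$E_{\bullet}$-direction'' will have $E_1$ (or $E_2$) page built from $K^{\d}_{p,q+i}$ of the free modules $E_i$ with a degree shift by $i$, and the other filtration simply recovers $K^{\d}_{p,q}(M)$ because $E_{\bullet}\to M$ is exact. Comparing, $K^{\d}_{p,q}(M)$ is subquotient of a sum of the groups $K^{\d}_{p,q+i}(E_i)$ shifted appropriately, and these vanish in the stated range of $p$.

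Concretely, first I would note that for a single free module $S(\ul b)$ one has $K^{\d}_{p,q}(S(\ul b))$ is the homology of $\bigwedge^{p+1}S_{\d}\oo S(\ul b)_{(q-1)\d}\to\bigwedge^{p}S_{\d}\oo S(\ul b)_{q\d}\to\bigwedge^{p-1}S_{\d}\oo S(\ul b)_{(q+1)\d}$, and since $S(\ul b)_{j\d}=S_{j\d+\ul b}=H^0(X,\mc O(j\d+\ul b))$, this complex is precisely (\ref{eq:koszul}) for $\mc B=\mc O(\ul b)$ with $X=\bb P V_1\times\cdots\times\bb P V_n$ and $\mc L=\mc O(\d)$. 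Hence $K^{\d}_{p,q}(S(\ul b))=K^{\d}_{p,q}(\mc B_{\ul b})$, which vanishes for $p\le P_{q,\ul b}(\d)$ by definition of the functions $P_{q,\ul b}$. Second, I would break (\ref{eq:resolution}) into short exact sequences $0\to Z_i\to E_i\to Z_{i-1}\to 0$ with $Z_{-1}=M$, and use the long exact sequence that the (exact) three-term Koszul functor induces between $K^{\d}_{p,q+\bullet}(Z_i)$, $K^{\d}_{p,q+\bullet}(E_i)$, $K^{\d}_{p,q+\bullet}(Z_{i-1})$; a short diagram chase (shifting $q$ by one each time one moves across a short exact sequence, which is where the ``$+i$'' in (\ref{eq:defPd}) comes from) propagates the vanishing down the resolution. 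Alternatively, the spectral sequence of the double complex $\bigwedge^{\bullet}S_{\d}\oo E_{\bullet}$ (in the internal degree strip selected by $q\d$) gives the same conclusion more cleanly: one page has entries $K^{\d}_{p,q+i}(E_i)$, the other collapses to $K^{\d}_{p,q}(M)$.

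The bookkeeping of the degree shifts is the one genuinely delicate point: one must track how the index $q$ in $K^{\d}_{p,q}$ changes when passing from $Z_{i-1}$ to $E_i$ to $Z_i$, because the three-term complex defining $K^{\d}_{p,q}(Z_{i-1})$ involves $(Z_{i-1})_{(q-1)\d},(Z_{i-1})_{q\d},(Z_{i-1})_{(q+1)\d}$, and the short exact sequence $0\to Z_i\to E_i\to Z_{i-1}\to 0$ relates the graded pieces but induces a connecting map that shifts homological degree, so the contribution of $E_i$ to $K^{\d}_{p,q}(M)$ is really a contribution of $K^{\d}_{p,q+i}(E_i)$ to the relevant hyper-homology. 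Getting the sign of this shift right (and confirming it is $q+i$, not $q-i$) is the main obstacle; once that is pinned down, since $E_i=\bigoplus_{\ul b\in\mc S_i}F_{i,\ul b}\oo S(\ul b)$ gives $K^{\d}_{p,q+i}(E_i)=\bigoplus_{\ul b\in\mc S_i}F_{i,\ul b}\oo K^{\d}_{p,q+i}(\mc B_{\ul b})$, and each summand vanishes for $p\le P_{q+i,\ul b}(\d)$, the hyper-homology group $K^{\d}_{p,q}(M)$ vanishes for $p\le\min\{P_{q+i,\ul b}(\d)+i:\ul b\in\mc S_i,\ i=0,\dots,m\}=P(\d)$, as claimed. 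I would also remark that $m<\infty$ (the resolution is finite because $S$ is a polynomial ring) is used to make the spectral sequence / induction terminate, and that the hypothesis $q\ge 2$ is needed so that all the $q+i\ge 2$ and the functions $P_{q+i,\ul b}$ are among those we assumed to exist.
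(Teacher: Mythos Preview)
Your approach is essentially the paper's: form the double complex $G^i_j=\bigwedge^{p-i}S_{\d}\otimes (E_j)_{(q+i)\d}$, compare the two spectral sequences, and reduce to the vanishing of $K^{\d}_{\bullet,\bullet}(S(\ul b))$. The short-exact-sequence version you sketch is equivalent.

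One bookkeeping correction, precisely at the point you flagged as delicate. The terms on the relevant diagonal are $K^{\d}_{p-i,\,q+i}(E_i)$, not $K^{\d}_{p,\,q+i}(E_i)$: both indices shift. In the long-exact-sequence picture, the connecting map for $0\to Z_i\to E_i\to Z_{i-1}\to 0$ sends $K^{\d}_{p,q}(Z_{i-1})$ to $K^{\d}_{p-1,\,q+1}(Z_i)$, so after $i$ steps you need $K^{\d}_{p-i,\,q+i}(E_i)=0$. This vanishes when $p-i\le P_{q+i,\ul b}(\d)$, i.e.\ $p\le P_{q+i,\ul b}(\d)+i$; that is where the ``$+i$'' in (\ref{eq:defPd}) actually comes from. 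As you wrote it, the condition ``$K^{\d}_{p,q+i}(E_i)=0$ for $p\le P_{q+i,\ul b}(\d)$'' followed by the conclusion ``$p\le P_{q+i,\ul b}(\d)+i$'' does not match up. With this correction your argument is complete and coincides with the paper's.
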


\begin{proof} This follows from \cite[Prop.~(1.d.3)]{green1}. We sketch a proof for completeness. Consider the complex
\[F^{\bullet}:\ F^{-1}\to F^0\to F^1\to\cdots\to F^p\to 0,\]
where
\[F^i=\bigwedge^{p-i}S_{\d}\oo M_{(q+i)\d},\ i=-1,0,\cdots,p.\]
We have $H^i(F^{\bullet})=K_{p-i,q+i}^{\d}(M)$ for $i\geq 0$. We construct a double complex $G^{\bullet}_{\bullet}$ which is quasi--isomorphic to $F^{\bullet}$, by letting $G^i_j=\bigwedge^{p-i}S_{\d}\oo (E_j)_{(q+i)\d}$ for $i=-1,0,\cdots,p$, and $j=0,1,\cdots,m$: 
\[
 \xymatrix{
\bigwedge^{p+1}S_{\d}\oo (E_0)_{(q-1)\d} \ar[r] & \bigwedge^{p}S_{\d}\oo (E_0)_{q\d} \ar[r] & \cdots \ar[r] & S_{\d}\oo (E_0)_{(q+p-1)\d} \ar[r] & (E_0)_{(q+p)\d} \\
\bigwedge^{p+1}S_{\d}\oo (E_1)_{(q-1)\d} \ar[r]\ar[u] & \bigwedge^{p}S_{\d}\oo (E_1)_{q\d} \ar[r]\ar[u] & \cdots \ar[r]\ar[u] & S_{\d}\oo (E_1)_{(q+p-1)\d} \ar[r]\ar[u] & (E_1)_{(q+p)\d}\ar[u] \\
\vdots \ar[r]\ar[u] & \vdots \ar[r]\ar[u] & \ddots \ar[r]\ar[u] & \vdots \ar[r]\ar[u] & \vdots \ar[u] \\
\bigwedge^{p+1}S_{\d}\oo (E_m)_{(q-1)\d} \ar[r]\ar[u] & \bigwedge^{p}S_{\d}\oo (E_m)_{q\d} \ar[r]\ar[u] & \cdots \ar[r]\ar[u] & S_{\d}\oo (E_m)_{(q+p-1)\d} \ar[r]\ar[u] & (E_m)_{(q+p)\d}\ar[u] \\
}
\]
\noindent The vertical maps are induced from (\ref{eq:resolution}), while the horizontal ones are the usual Koszul differentials.

The vertical homology of $G^{\bullet}_{\bullet}$ is $F^{\bullet}$:
\[H_0(G^i_{\bullet})=F^i,\ H_j(G^i_{\bullet})=0\quad\rm{for}\ j>0,\]
while the horizontal homology of $G^{\bullet}_{\bullet}$ is given by
\[H^i(G^{\bullet}_j)=\bigoplus_{\ul{b}\in\mc{S}_j} F_{j,\ul{b}}\oo K_{p-i,q+i}^{\d}(S(\ul{b}))\]
for $i\geq 0$. Comparing the two spectral sequences associated to the double complex $G_{\bullet}^{\bullet}$ we conclude that in order to have that $K_{p,q}^{\d}(M)=0$ it suffices to show that $H^i(G^{\bullet}_i)=0$ for $0\leq i\leq m$, which in turn would be implied by the vanishing of the modules $K_{p-i,q+i}^{\d}(S(\ul{b}))$, for $0\leq i\leq m$, $\ul{b}\in\mc{S}_i$. Since $K_{p-i,q+i}^{\d}(S(\ul{b}))=0$ for $p-i\leq P_{q+i,\ul{b}}(\d)$, it follows from (\ref{eq:defPd}) that $K_{p,q}^{\d}(M)=0$ for $p\leq P(\d)$, concluding the proof of the theorem.
\end{proof}

\begin{corollary}
 Let $M$ be a finitely generated graded $S$--module. There exist integers $b_1,\cdots,b_n$ such that $K_{p,2}^{\d}(M)=0$ for all positive integers $d_i\geq -b_i$ and all $p\leq\min\{d_i+b_i:i=1,\cdots,n\}$. 
\end{corollary}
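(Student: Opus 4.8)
The plan is to run the minimal free resolution of $M$ through Theorem~\ref{thm:reduction} with $q=2$ and read off the conclusion from the bound $P(\d)$ of \eqref{eq:defPd}. Writing \eqref{eq:resolution} with $E_i=\bigoplus_{\ul{c}\in\mc{S}_i}F_{i,\ul{c}}\oo S(\ul{c})$, the set $\mc{S}=\bigcup_{i=0}^{m}\mc{S}_i$ is a finite subset of $\bb{Z}^n$ (the resolution is finite since $S$ is a polynomial ring). I would then set $b_j:=\min\{c_j:\ul{c}=(c_1,\cdots,c_n)\in\mc{S}\}$ for $j=1,\cdots,n$ and claim that these integers work; the whole argument consists of bounding the numbers $P_{q,\ul{c}}(\d)$ that enter \eqref{eq:defPd}.

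For this I would first observe that $K_{p,q}^{\d}(S(\ul{c}))$ --- the $\d$--syzygy module that Theorem~\ref{thm:reduction} attaches to the free summand $S(\ul{c})$ --- is exactly the syzygy functor $K_{p,q}^{\d}(\ul{c})$ of Section~\ref{subsec:syzfun}, since $S(\ul{c})_{q\d}=S_{\ul{c}+q\d}$ is the space of global sections of $\mc{O}(\ul{c}+q\d)$ on $\bb{P}V_1\times\cdots\times\bb{P}V_n$. Corollary~\ref{cor:Np} then lets us take $P_{2,\ul{c}}(\d)=\min_j\{d_j+c_j\}$ for every $\ul{c}\in\bb{Z}^n$. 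For higher $q$ I would use the identity $K_{p,q}^{\d}(\ul{c})=K_{p,q+1}^{\d}(\ul{c}-\d)$ (obtained by twisting $\mc{B}_{\ul{c}}$ by $\mc{L}^{-1}$); iterating it downward to $q=2$ gives $K_{p,2+i}^{\d}(\ul{c})=K_{p,2}^{\d}(\ul{c}+i\d)$, so one may take $P_{2+i,\ul{c}}(\d)=\min_j\{(i+1)d_j+c_j\}$.

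Finally I would plug these into \eqref{eq:defPd}. For every $i\geq 0$ and $\ul{c}\in\mc{S}_i$,
\[P_{2+i,\ul{c}}(\d)+i=\min_j\{(i+1)d_j+c_j+i\}\geq\min_j\{d_j+c_j\}\geq\min_j\{d_j+b_j\},\]
the first inequality because $(i+1)d_j+i-d_j=i(d_j+1)\geq 0$ for $d_j\geq 1$, the second because $c_j\geq b_j$. Taking the minimum over all $i$ and $\ul{c}$ gives $P(\d)\geq\min_j\{d_j+b_j\}$, and Theorem~\ref{thm:reduction} then yields $K_{p,2}^{\d}(M)=0$ for $p\leq\min_j\{d_j+b_j\}$ whenever the $d_i$ are positive; the hypothesis $d_i\geq -b_i$ is simply what makes $\min_j\{d_j+b_j\}\geq 0$, so that the conclusion is not vacuous. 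I do not anticipate a genuine obstacle here --- the statement is essentially a bookkeeping consequence of Theorem~\ref{thm:reduction} and Corollary~\ref{cor:Np}. The only point needing a little care is the reduction from $q>2$ to $q=2$ and the appearance of possibly negative twists $\ul{c}$; but Corollary~\ref{cor:Np} is already stated for arbitrary integer twists, and when $d_j+c_j<0$ the bound it gives is vacuous, which is harmless since only the minimum over $j$ is used.
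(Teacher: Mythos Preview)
Your argument is correct and is exactly the intended derivation: the paper states this corollary without proof, as an immediate consequence of Theorem~\ref{thm:reduction} combined with Corollary~\ref{cor:Np}, and your bookkeeping with $P_{2+i,\ul{c}}(\d)=\min_j\{(i+1)d_j+c_j\}$ and the choice $b_j=\min\{c_j:\ul{c}\in\mc{S}\}$ carries this out precisely. The reduction $K_{p,2+i}^{\d}(\ul{c})=K_{p,2}^{\d}(\ul{c}+i\d)$ and the inequality $(i+1)d_j+i\geq d_j$ for $d_j\geq 1$ are both valid, so there is nothing to add.
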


Consider now an arbitrary projective variety $X$. Given line bundles $\A_1,\cdots,\A_n$, we write $\mc{L}_{\d}(\A_1,\cdots,\A_n)$, or simply $\mc{L}_{\d}$ for $\A_1^{\oo d_1}\oo\cdots\oo \A_n^{\oo d_n}$. We have

\begin{corollary}\label{cor:reduction}
 Fix $q\geq 2$ and assume that $\A_1,\cdots,\A_n$ are very ample line bundles on $X$, sufficiently positive so that if we let $V_i=H^0(X,\A_i)$, then the natural maps
\begin{equation}\label{eq:surjectivity}
\Sym^{d_1}V_1\oo\cdots\oo\Sym^{d_n}V_n \lra H^0(\mc{L}_{\d}) 
\end{equation}
are surjective for all $d_i>0$. If $\mc{B}$ is any coherent sheaf on $X$, then there exist $m\geq 0$ and finite subsets $\mc{S}_i\subset\bb{Z}^n$, $i=0,\cdots,m$, such that if we define $P(\d)$ as in (\ref{eq:defPd}) then $K_{p,q}(X,\mc{B};\mc{L}_{\d})=0$ for $p\leq P(\d)$.

 In particular, let's assume that $n=1$ and that \cite[Conjecture~7.6]{ein-laz} holds. If $\mc{A}$ is a very ample line bundle on $X$ such that the corresponding embedding is projectively normal, then there exists a polynomial $P(d)$ of degree $(q-1)$ such that
\[K_{p,q}(X,\mc{B};\mc{A}^{\oo d})=0\ \rm{for}\ p\leq P(d).\]
\end{corollary}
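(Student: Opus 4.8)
The plan is to realize $K_{p,q}(X,\mc{B};\mc{L}_{\d})$ as a summand of a $\d$--syzygy module of a finitely generated graded $S$--module and then apply Theorem~\ref{thm:reduction}. Put $V_i=H^0(X,\A_i)$ and $S=\Sym(V_1\oplus\cdots\oplus V_n)$, so that $S_{\ul{a}}=\Sym^{a_1}V_1\oo\cdots\oo\Sym^{a_n}V_n$ is the degree--$\ul{a}$ piece; this is the multihomogeneous coordinate ring of $Y=\bb{P}V_1\times\cdots\times\bb{P}V_n$, inside which $X$ sits (via the very ample $\A_i$) with $\mc{L}_{\d}=\mc{O}_Y(\d)|_X$. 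First I would produce, using the positivity of the $\A_i$ (multigraded Serre vanishing, together with the surjectivity (\ref{eq:surjectivity})), a finitely generated graded $S$--module $B$ whose Veronese pieces satisfy $B_{k\d}=H^0(X,\mc{B}\oo\mc{L}_{k\d})$ for all $\d>0$ and all $k\geq 1$; one may take $B=\bigoplus_{\ul{a}\geq 0}H^0(X,\mc{B}\oo\mc{L}_{\ul{a}})$ with $S$ acting through the multiplication maps $S_{\ul{a}}\to H^0(X,\mc{L}_{\ul{a}})$, and this is exactly where ``sufficiently positive'' is used. By the Hilbert syzygy theorem $B$ has a finite minimal free resolution as in (\ref{eq:resolution}).

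The heart of the matter is to compare Green's group $K_{p,q}(X,\mc{B};\mc{L}_{\d})$ --- which, unwinding (\ref{eq:koszul}), is $\Tor^{\Sym W}_p\big(\bigoplus_k B_{k\d},\K\big)_{p+q}$ for $W=H^0(X,\mc{L}_{\d})$ --- with the $\d$--syzygy module $K_{p,q}^{\d}(B)=\Tor^{\Sym S_{\d}}_p\big(\bigoplus_k B_{k\d},\K\big)_{p+q}$ appearing in Theorem~\ref{thm:reduction}. Let $J_{\d}=\ker(S_{\d}\twoheadrightarrow W)$, a space of linear forms in $\Sym S_{\d}$, which acts as zero on the $\Sym W$--module $\bigoplus_k B_{k\d}$ since the action of $S_{\d}$ on $B$ factors through $S_{\d}\to W$. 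Choosing a $\K$--linear splitting of $S_{\d}\twoheadrightarrow W$ identifies $\Sym S_{\d}$ with $\Sym W\oo_{\K}\Sym J_{\d}$, and the usual change-of-rings computation (tensoring a free resolution of $\K$ over $\Sym W$ with the Koszul complex of the regular sequence $J_{\d}$) gives, keeping track of degrees,
\[K_{p,q}^{\d}(B)\;\cong\;\bigoplus_{i+j=p} K_{i,q}(X,\mc{B};\mc{L}_{\d})\oo\textstyle\bigwedge^{j}J_{\d}.\]
In particular $K_{p,q}^{\d}(B)=0$ forces $K_{p,q}(X,\mc{B};\mc{L}_{\d})=0$ (take $j=0$). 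Applying Theorem~\ref{thm:reduction} to $M=B$ with the resolution from the first step, and with $P(\d)$ as in (\ref{eq:defPd}), we obtain $K_{p,q}^{\d}(B)=0$ for $p\leq P(\d)$, hence $K_{p,q}(X,\mc{B};\mc{L}_{\d})=0$ for $p\leq P(\d)$. This is the first assertion.

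For the ``in particular'' part, take $n=1$ and $\A_1=\A$: projective normality of the embedding by $|\A|$ is precisely the surjectivity (\ref{eq:surjectivity}) for the line bundles $\mc{L}_d=\A^{\oo d}$, so the argument above applies. Assuming \cite[Conjecture~7.6]{ein-laz}, each $P_{q+i,b}(d)$ may be taken to be a polynomial of degree $(q+i-1)$ in $d$ tending to $+\infty$. In $P(d)=\min\{P_{q+i,b}(d)+i:b\in\mc{S}_i,\ i=0,\cdots,m\}$ the terms with $i\geq 1$ have strictly faster growth than the finitely many degree--$(q-1)$ polynomials $P_{q,b}(d)$, $b\in\mc{S}_0$; consequently $P(d)\to+\infty$ with $\liminf_{d\to\infty}P(d)/d^{q-1}>0$, so one can choose an honest polynomial $\tilde{P}(d)$ of degree $(q-1)$ with $\tilde{P}(d)\leq P(d)$ for all $d\geq 1$. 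Then $K_{p,q}(X,\mc{B};\A^{\oo d})=0$ for $p\leq\tilde{P}(d)$.

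The step I expect to be the main obstacle is the comparison in the second paragraph: one must be careful that $K_{p,q}(X,\mc{B};\mc{L}_{\d})$ (built from the full section space $H^0(\mc{L}_{\d})$) and $K_{p,q}^{\d}(B)$ (built from $S_{\d}$) do \emph{not} coincide in general --- they differ by the $\bigwedge^{\bullet}J_{\d}$--factors above, equivalently they agree only when $X$ is projectively normal in its Segre--Veronese embedding --- so it is essential that one only needs the one-sided implication ``vanishing of $K_{p,q}^{\d}(B)$ implies vanishing of $K_{p,q}(X,\mc{B};\mc{L}_{\d})$''. A secondary, routine point is the finite generation of $B$ over $S$, which is folded into the hypothesis that the $\A_i$ are sufficiently positive.
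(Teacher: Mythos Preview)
Your proof is correct and follows essentially the same route as the paper's: both push $\mc{B}$ forward to $Y=\bb{P}V_1\times\cdots\times\bb{P}V_n$, compare Koszul cohomology over $W=H^0(X,\mc{L}_{\d})$ with that over $S_{\d}$ via the kernel $J_{\d}$ of $S_{\d}\twoheadrightarrow W$, and then invoke Theorem~\ref{thm:reduction} for the section module $M=\bigoplus_{\ul{a}\geq 0}H^0(X,\mc{B}\oo\mc{L}_{\ul{a}})$. The paper phrases the comparison as ``the syzygies on $\bb{P}W$ differ from those on $\bb{P}H^0(\mc{L}_{\d})$ by tensoring with a Koszul complex of linear forms'' and asserts equality of the minimal nonvanishing $p$, whereas you write the explicit change-of-rings identity $K_{p,q}^{\d}(B)\cong\bigoplus_{i+j=p}K_{i,q}(X,\mc{B};\mc{L}_{\d})\oo\bigwedge^{j}J_{\d}$ and draw only the one-sided implication you need; these are the same argument, yours stated with slightly more detail.
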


\begin{proof} We have a commutative diagram where all arrows are closed embeddings
\[
\xymatrix{
  X \ar[r]^(.35){|\A_1|\times\cdots\times|\A_n|} \ar[d]_{|\mc{L}_{\d}|} & Y=\bb{P}V_1\times\cdots\times\bb{P}V_n \ar[d]^{|\mc{O}(\d)|} \\
  {\bb{P}H^0(\mc{L}_{\d}}) \ar@{^{(}->}[r] & \bb{P}W=\bb{P}(\Sym^{d_1}V_1\oo\cdots\oo\Sym^{d_n}V_n) \\
}
\]
so we can think of $\mc{B}$ as a sheaf on any of the spaces $X,Y,\bb{P}H^0(\mc{L}_{\d})$ or $\bb{P}W$. Since $\bb{P}H^0(\mc{L}_{\d})$ is a linear subspace of $\bb{P}W$ by (\ref{eq:surjectivity}), the syzygies of $\mc{B}$ on $\bb{P}W$ differ from those on $\bb{P}H^0(\mc{L}_{\d})$ by tensoring with a Koszul complex of linear forms. In particular
\[\min\{p:K_{p,q}(X,\mc{B};\mc{L}_{\d})\neq 0\}=\min\{p:K_{p,q}(Y,\mc{B};\mc{O}(\d))\neq 0\}.\]
We are then reduced to the case when $X=Y$ is a product of projective spaces and $\mc{L}_{\d}=\mc{O}(\d)$. The conclusion follows now from Theorem \ref{thm:reduction} if we let 
\[M=\bigoplus_{\ul{a}\in\bb{Z}_{\geq 0}^n} H^0(Y,\mc{B}\oo\mc{O}(\ul{a})).\]
\noindent The last part of the corollary follows from the fact that under the assumption of \cite[Conjecture~7.6]{ein-laz}, the functions $P_{q+i,b}(d)$ in (\ref{eq:defPd}) can be taken to be polynomials of degree $q+i-1\geq q-1$.
\end{proof}

\begin{corollary}
 Fix $q\geq 2$ and let $\mc{A}$ be an ample line bundle on a projective variety $X$. If $\mc{B}$ is any coherent sheaf on $X$, then there exist $m\geq 0$, finite subsets $\mc{S}_i\subset\bb{Z}^2$, $i=0,\cdots,m$, and functions $d_j=d_j(d)$, $j=1,2$, growing linearly with $d$, such that if we define $P(d_1,d_2)$ as in (\ref{eq:defPd}) and let $P(d)=P(d_1(d),d_2(d))$, then for sufficiently large values of $d$ we have
\[K_{p,q}(X,\mc{B};\mc{A}^{\oo d})=0\ \rm{for}\ p\leq P(d).\]
\end{corollary}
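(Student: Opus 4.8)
The plan is to derive this from Corollary~\ref{cor:reduction} applied with $n=2$, the point being to write a large power of the ample bundle $\mc{A}$ as a tensor product of two \emph{fixed} very ample bundles raised to exponents that grow linearly in $d$. The only ingredient beyond Corollary~\ref{cor:reduction} is the elementary observation that consecutive integers are coprime.

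First I would fix an integer $e\gg 0$ such that $\mc{A}_1:=\mc{A}^{\oo e}$ and $\mc{A}_2:=\mc{A}^{\oo(e+1)}$ are very ample on $X$ and, setting $V_1=H^0(X,\mc{A}_1)$, $V_2=H^0(X,\mc{A}_2)$, the multiplication maps
\[\Sym^{d_1}V_1\oo\Sym^{d_2}V_2\lra H^0\bigl(X,\mc{A}^{\oo(ed_1+(e+1)d_2)}\bigr)\]
are surjective for all $d_1,d_2>0$. Such an $e$ exists by standard Castelnuovo--Mumford regularity arguments (for $e\gg 0$ the embeddings by $|\mc{A}_1|$ and $|\mc{A}_2|$ are projectively normal, and multiplication of sections of two powers of $\mc{A}$ with exponents above the regularity threshold is surjective; see \cite{lazarsfeld}), so $\mc{A}_1,\mc{A}_2$ satisfy the hypotheses of Corollary~\ref{cor:reduction} with $\mc{L}_{(d_1,d_2)}=\mc{A}_1^{\oo d_1}\oo\mc{A}_2^{\oo d_2}=\mc{A}^{\oo(ed_1+(e+1)d_2)}$. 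That corollary then produces an integer $m\geq 0$ and finite subsets $\mc{S}_i\subset\bb{Z}^2$, $i=0,\cdots,m$ --- extracted from the minimal free resolution of $M=\bigoplus_{\ul{a}\geq 0}H^0(X,\mc{B}\oo\mc{O}(\ul{a}))$ over $\Sym(V_1\oplus V_2)$, hence depending only on $\mc{B}$ and on the two bundles, not on $d$ --- such that, with $P(d_1,d_2)$ defined as in (\ref{eq:defPd}), one has $K_{p,q}(X,\mc{B};\mc{L}_{(d_1,d_2)})=0$ for $p\leq P(d_1,d_2)$ and all $d_1,d_2>0$.

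Next I would choose $d_1(d),d_2(d)$. Since $\gcd(e,e+1)=1$, for every $d\gg 0$ there are integers $d_1(d),d_2(d)\geq 1$ with $e\,d_1(d)+(e+1)\,d_2(d)=d$ and $d_j(d)=c_j\,d+O(1)$ for fixed constants $c_1,c_2>0$: one takes $d_2(d)$ to be a positive integer congruent to $d$ modulo $e$ and of size roughly $d/\bigl(2(e+1)\bigr)$, which forces $d_1(d)=\bigl(d-(e+1)d_2(d)\bigr)/e$ to be a positive integer of size roughly $d/(2e)$; I omit this routine verification. For such $d$ we have $\mc{A}^{\oo d}\cong\mc{L}_{(d_1(d),d_2(d))}$, and since Koszul cohomology depends only on the line bundle, the previous paragraph gives $K_{p,q}(X,\mc{B};\mc{A}^{\oo d})=0$ for $p\leq P(d):=P\bigl(d_1(d),d_2(d)\bigr)$, which is the assertion.

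The step requiring the most care is the choice of $e$: one must invoke not merely that $\mc{A}^{\oo e}$ eventually becomes very ample and projectively normal, but the uniform surjectivity of \emph{all} the bigraded multiplication maps displayed above --- precisely the ``sufficient positivity'' hypothesis of Corollary~\ref{cor:reduction}. Everything else is bookkeeping: $m$ and the $\mc{S}_i$ come from a single application of Corollary~\ref{cor:reduction}, and the linear growth of $d_1(d),d_2(d)$ is elementary number theory.
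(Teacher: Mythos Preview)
Your proof is correct and follows essentially the same route as the paper: choose two sufficiently large coprime exponents so that $\mc{A}_1=\mc{A}^{\oo a_1}$ and $\mc{A}_2=\mc{A}^{\oo a_2}$ satisfy the hypotheses of Corollary~\ref{cor:reduction}, then for $d\gg 0$ write $d=a_1d_1+a_2d_2$ with $d_i\approx d/2a_i$ and apply that corollary. Your specialization to consecutive integers $a_1=e$, $a_2=e+1$ is a perfectly good way to guarantee coprimality, and your more detailed discussion of the surjectivity hypothesis and of the choice of $d_1(d),d_2(d)$ only makes the argument more explicit.
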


\begin{proof}
 Consider sufficiently large coprime integers $a_1,a_2$ such that $\mc{A}_i=\mc{A}^{\oo a_i}$, $i=1,2$ satisfy the hypotheses of Corollary \ref{cor:reduction}. If $d$ is large enough, we can find $d_1,d_2$ with $d_i\approx d/2a_i$ such that $d=a_1 d_1+a_2d_2$, i.e. $\mc{A}^{\oo d}=\mc{A}_1^{\oo d_1}\oo\mc{A}_2^{\oo d_2}$. The conclusion follows from Corollary \ref{cor:reduction}.
\end{proof}

\begin{corollary}
 Fix $q\geq 2$ and let $\mc{A},\mc{P}$ be line bundles on a projective variety $X$, with $\mc{A}$ ample. We let $\mc{L}_d=\mc{A}^{\oo d}\oo\mc{P}$. If $\mc{B}$ is any coherent sheaf on $X$, then there exist $m\geq 0$, finite subsets $\mc{S}_i\subset\bb{Z}^3$, $i=0,\cdots,m$, and functions $d_j=d_j(d)$, $j=1,2,3$, growing linearly with $d$, such that if we define $P(d_1,d_2,d_3)$ as in (\ref{eq:defPd}) and let $P(d)=P(d_1(d),d_2(d),d_3(d))$, then for sufficiently large values of $d$ we have
\[K_{p,q}(X,\mc{B};\mc{L}_d)=0\ \rm{for}\ p\leq P(d).\]
In particular, if the functions $P_{q,\ul{b}}(\d)$, $\ul{b}\in\bb{Z}^3$ grow as polynomials of degree $(q-1)$ in $d_1,d_2,d_3$, then $P(d)$ grows as a polynomial of degree $(q-1)$, so \cite[Conjecture~7.1]{ein-laz} holds.
\end{corollary}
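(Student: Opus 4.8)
The plan is to reduce the statement, as in the preceding corollaries, to the case of a product of projective spaces via Corollary \ref{cor:reduction}, using \emph{three} factors in order to absorb the twist by the possibly non-positive bundle $\mc{P}$ while keeping all three exponents growing linearly in $d$. First I would fix positive integers $a_1,a_2,a_3$, all sufficiently large, and set
\[\mc{A}_1=\mc{A}^{\oo a_1}\oo\mc{P},\qquad \mc{A}_2=\mc{A}^{\oo a_2}\oo\mc{P},\qquad \mc{A}_3=\mc{A}^{\oo a_3}\oo\mc{P}^{\oo -1}.\]
Because $\mc{A}$ is ample, for $a_i\gg 0$ each $\mc{A}_i$ is very ample and, in fact, sufficiently positive that the surjectivity hypothesis (\ref{eq:surjectivity}) of Corollary \ref{cor:reduction} holds for the triple $(\mc{A}_1,\mc{A}_2,\mc{A}_3)$; this is the standard fact (Castelnuovo--Mumford type regularity for products) that large twists of a fixed line bundle by an ample bundle define projectively normal embeddings with the required multigraded surjectivity. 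In addition, I would arrange (which is possible while keeping the $a_i$ large) that $\gcd(a_1+a_3,\,a_2+a_3)=1$.

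The next step is elementary arithmetic. For a triple $(d_1,d_2,d_3)$ of positive integers one has
\[\mc{A}_1^{\oo d_1}\oo\mc{A}_2^{\oo d_2}\oo\mc{A}_3^{\oo d_3}=\mc{A}^{\oo(a_1 d_1+a_2 d_2+a_3 d_3)}\oo\mc{P}^{\oo(d_1+d_2-d_3)},\]
so to realize $\mc{L}_d=\mc{A}^{\oo d}\oo\mc{P}$ it suffices to solve $d_1+d_2-d_3=1$ together with $a_1 d_1+a_2 d_2+a_3 d_3=d$. Substituting $d_3=d_1+d_2-1$ turns this into the single Diophantine equation $(a_1+a_3)d_1+(a_2+a_3)d_2=d+a_3$; by the coprimality of $a_1+a_3$ and $a_2+a_3$, for all $d\gg 0$ this admits a solution in positive integers with $d_1\approx\frac{d+a_3}{2(a_1+a_3)}$ and $d_2\approx\frac{d+a_3}{2(a_2+a_3)}$, and then $d_3=d_1+d_2-1>0$ as well. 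The resulting functions $d_j=d_j(d)$, $j=1,2,3$, all grow linearly in $d$ with positive leading coefficient.

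With these choices I would invoke Corollary \ref{cor:reduction} for $n=3$, applied to $\mc{A}_1,\mc{A}_2,\mc{A}_3$ and the sheaf $\mc{B}$: it yields $m\geq 0$, finite subsets $\mc{S}_i\subset\bb{Z}^3$, the function $P(d_1,d_2,d_3)$ given by (\ref{eq:defPd}), and the vanishing $K_{p,q}(X,\mc{B};\mc{A}_1^{\oo d_1}\oo\mc{A}_2^{\oo d_2}\oo\mc{A}_3^{\oo d_3})=0$ for $p\leq P(d_1,d_2,d_3)$. Since $\mc{A}_1^{\oo d_1(d)}\oo\mc{A}_2^{\oo d_2(d)}\oo\mc{A}_3^{\oo d_3(d)}=\mc{L}_d$ for $d\gg 0$, setting $P(d)=P(d_1(d),d_2(d),d_3(d))$ gives the first assertion of the corollary. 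For the final sentence: if each $P_{q+i,\ul{b}}(\d)$ is a polynomial of degree $q+i-1$ in $(d_1,d_2,d_3)$, then substituting the linear functions $d_j(d)$ (whose leading coefficients are positive, so that no degree is lost) produces a polynomial of degree $q+i-1\geq q-1$ in $d$; hence by (\ref{eq:defPd}) the minimum $P(d)$ grows as a polynomial of degree $q-1$, the $i=0$ contributions being the ones of lowest degree, which is exactly the conclusion of \cite[Conjecture~7.1]{ein-laz} for the data $(X,\mc{B},\mc{A},\mc{P})$ with $\mc{L}_d=\mc{A}^{\oo d}\oo\mc{P}$.

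The main obstacle I anticipate is the first step: checking that the $a_i$ can be taken large enough that all three bundles $\mc{A}_1,\mc{A}_2,\mc{A}_3$ \emph{simultaneously} satisfy the multigraded surjectivity (\ref{eq:surjectivity}) demanded by Corollary \ref{cor:reduction}, which is the only place where genuine positivity input beyond the formal machinery enters, while also meeting the coprimality requirement. Once that is in place, everything else is the bookkeeping already carried out for the two preceding corollaries, together with the routine solvability of the linear Diophantine equation with prescribed linear growth of the solutions.
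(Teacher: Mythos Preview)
Your proposal is correct and follows essentially the same approach as the paper. The only cosmetic difference is in how $\mc{P}$ is distributed among the three auxiliary bundles: the paper takes $\mc{A}_1=\mc{A}^{\oo a_1}$, $\mc{A}_2=\mc{A}^{\oo a_2}\oo\mc{P}$, $\mc{A}_3=\mc{A}^{\oo a_3}\oo\mc{P}^{-1}$ with $\gcd(a_1,a_2+a_3)=1$ and sets $d_3=d_2-1$, whereas you place a copy of $\mc{P}$ in $\mc{A}_1$ as well, which changes the coprimality condition to $\gcd(a_1+a_3,a_2+a_3)=1$ and the Diophantine substitution to $d_3=d_1+d_2-1$; both variants are equally valid and lead to the same conclusion via Corollary~\ref{cor:reduction}.
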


\begin{proof}
 Consider sufficiently positive integers $a_1,a_2,a_3$ such that $\rm{gcd}(a_1,a_2+a_3)=1$ and such that the line bundles $\mc{A}_1=\mc{A}^{\oo a_1}$, $\mc{A}_2=\mc{A}^{\oo a_2}\oo\mc{P}$, $\mc{A}_3=\mc{A}^{\oo a_3}\oo\mc{P}^{-1}$ satisfy the hypotheses of Corollary~\ref{cor:reduction}. If $d$ is large enough, we can find $d_1,d_2$ with $d_1\approx d/2a_1$, $d_2\approx d/2(a_2+a_3)$ such that $d+a_3=a_1 d_1+(a_2+a_3)d_2$. If we let $d_3=d_2-1$ then $d=a_1d_1+a_2d_2+a_3d_3$ and
\[\mc{L}_d=\mc{A}^{\oo d}\oo\mc{P}=\mc{A}_1^{\oo d_1}\oo\mc{A}_2^{\oo d_2}\oo\mc{A}_3^{\oo d_3}.\]
The conclusion follows as before from Corollary \ref{cor:reduction}.
\end{proof}

\section*{Acknowledgments} 
I would like to thank David Eisenbud for his guidance during the early stages of this project, and Mihai Fulger, Yu--Han Liu, Giorgio Ottaviani, Greg Smith, Andrew Snowden and Kevin Tucker for inspiring conversations and for comments on earlier drafts of this paper. I am particularly indebted to Tom Church for introducing me to the theory of representation stability, and to Steven Sam for pointing me to the literature on chessboard and matching complexes, and for numerous helpful suggestions. I also thank Dan Grayson and Mike Stillman for making Macaulay2 \cite{M2}, which was useful in many experiments.


	\begin{bibdiv}
		\begin{biblist}

\bib{ath}{article}{
   author={Athanasiadis, Christos A.},
   title={Decompositions and connectivity of matching and chessboard
   complexes},
   journal={Discrete Comput. Geom.},
   volume={31},
   date={2004},
   number={3},
   pages={395--403},
   issn={0179-5376},
   review={\MR{2036946 (2005a:05217)}},
   doi={10.1007/s00454-003-2869-x},
}

\bib{bir}{article}{
   author={Birkenhake, Christina},
   title={Linear systems on projective spaces},
   journal={Manuscripta Math.},
   volume={88},
   date={1995},
   number={2},
   pages={177--184},
   issn={0025-2611},
   review={\MR{1354104 (96h:14003)}},
   doi={10.1007/BF02567815},
}

\bib{blvz}{article}{
   author={Bj{\"o}rner, A.},
   author={Lov{\'a}sz, L.},
   author={Vre{\'c}ica, S. T.},
   author={{\v{Z}}ivaljevi{\'c}, R. T.},
   title={Chessboard complexes and matching complexes},
   journal={J. London Math. Soc. (2)},
   volume={49},
   date={1994},
   number={1},
   pages={25--39},
   issn={0024-6107},
   review={\MR{1253009 (95c:52021)}},
}

\bib{bouc}{article}{
   author={Bouc, S.},
   title={Homologie de certains ensembles de $2$-sous-groupes des groupes
   sym\'etriques},
   language={French, with French summary},
   journal={J. Algebra},
   volume={150},
   date={1992},
   number={1},
   pages={158--186},
   issn={0021-8693},
   review={\MR{1174893 (94d:57004)}},
   doi={10.1016/S0021-8693(05)80054-7},
}

\bib{bru-con-rom}{article}{
   author={Bruns, Winfried},
   author={Conca, Aldo},
   author={R{\"o}mer, Tim},
   title={Koszul homology and syzygies of Veronese subalgebras},
   journal={Math. Ann.},
   volume={351},
   date={2011},
   number={4},
   pages={761--779},
   issn={0025-5831},
   review={\MR{2854112}},
   doi={10.1007/s00208-010-0616-1},
}

\bib{bru-her}{article}{
   author={Bruns, Winfried},
   author={Herzog, J{\"u}rgen},
   title={Semigroup rings and simplicial complexes},
   journal={J. Pure Appl. Algebra},
   volume={122},
   date={1997},
   number={3},
   pages={185--208},
   issn={0022-4049},
   review={\MR{1481087 (98m:13020)}},
   doi={10.1016/S0022-4049(97)00051-0},
}

\bib{buch-eis}{article}{
   author={Buchsbaum, David A.},
   author={Eisenbud, David},
   title={Generic free resolutions and a family of generically perfect
   ideals},
   journal={Advances in Math.},
   volume={18},
   date={1975},
   number={3},
   pages={245--301},
   issn={0001-8708},
   review={\MR{0396528 (53 \#391)}},
}

\bib{church}{article}{
    AUTHOR = {Church, Thomas},
     TITLE = {Homological stability for configuration spaces of manifolds},
      note = {To appear in Inv. Math.},
      YEAR = {2011},
      journal = {arXiv},
      number = {1103.2441},
}

\bib{church-ellenberg-farb}{article}{
    AUTHOR = {Church, Thomas},
    AUTHOR = {Ellenberg, Jordan},
    AUTHOR = {Farb, Benson},
     TITLE = {FI--modules: a new approach to stability for $S_n$--representations},
      YEAR = {2012},
      journal = {arXiv},
      number = {1204.4533},
}

\bib{church-farb}{article}{
    AUTHOR = {Church, Thomas},
    AUTHOR = {Farb, Benson},
     TITLE = {Representation theory and homological stability},
      YEAR = {2010},
      journal = {arXiv},
      number = {1008.1368},
}

\bib{dong-wachs}{article}{
   author={Dong, Xun},
   author={Wachs, Michelle L.},
   title={Combinatorial Laplacian of the matching complex},
   journal={Electron. J. Combin.},
   volume={9},
   date={2002},
   number={1},
   pages={Research Paper 17, 11},
   issn={1077-8926},
   review={\MR{1912799 (2003g:05131)}},
}


\bib{ein-laz}{article}{
    AUTHOR = {Ein, Lawrence},
    AUTHOR = {Lazarsfeld, Robert},
     TITLE = {Asymptotic syzygies of algebraic varieties},
      note = {To appear in Inv. Math.},
      YEAR = {2011},
      journal = {arXiv},
      number = {1103.0483},
}


\bib{friedman-hanlon}{article}{
   author={Friedman, Joel},
   author={Hanlon, Phil},
   title={On the Betti numbers of chessboard complexes},
   journal={J. Algebraic Combin.},
   volume={8},
   date={1998},
   number={2},
   pages={193--203},
   issn={0925-9899},
   review={\MR{1648484 (2000c:05155)}},
   doi={10.1023/A:1008693929682},
}

\bib{ful-har}{book}{
   author={Fulton, William},
   author={Harris, Joe},
   title={Representation theory},
   series={Graduate Texts in Mathematics},
   volume={129},
   note={A first course;
   Readings in Mathematics},
   publisher={Springer-Verlag},
   place={New York},
   date={1991},
   pages={xvi+551},
   isbn={0-387-97527-6},
   isbn={0-387-97495-4},
   review={\MR{1153249 (93a:20069)}},
}

\bib{fulger-zhou}{article}{
    AUTHOR = {Fulger, Mihai},
    AUTHOR = {Zhou, Xin},
     TITLE = {Asymptotic syzygy functors of Veronese embeddings},
      YEAR = {2012},
      note = {In preparation}
}

\bib{gre-laz}{article}{
   author={Green, Mark},
   author={Lazarsfeld, Robert},
   title={On the projective normality of complete linear series on an
   algebraic curve},
   journal={Invent. Math.},
   volume={83},
   date={1985},
   number={1},
   pages={73--90},
   issn={0020-9910},
   review={\MR{813583 (87g:14022)}},
   doi={10.1007/BF01388754},
}

\bib{green1}{article}{
   author={Green, Mark L.},
   title={Koszul cohomology and the geometry of projective varieties},
   journal={J. Differential Geom.},
   volume={19},
   date={1984},
   number={1},
   pages={125--171},
   issn={0022-040X},
   review={\MR{739785 (85e:14022)}},
}

\bib{green2}{article}{
   author={Green, Mark L.},
   title={Koszul cohomology and the geometry of projective varieties. II},
   journal={J. Differential Geom.},
   volume={20},
   date={1984},
   number={1},
   pages={279--289},
   issn={0022-040X},
   review={\MR{772134 (86j:14011)}},
}

\bib{her-sch-smi}{article}{
   author={Hering, Milena},
   author={Schenck, Hal},
   author={Smith, Gregory G.},
   title={Syzygies, multigraded regularity and toric varieties},
   journal={Compos. Math.},
   volume={142},
   date={2006},
   number={6},
   pages={1499--1506},
   issn={0010-437X},
   review={\MR{2278757 (2007k:13025)}},
   doi={10.1112/S0010437X0600251X},
}

\bib{M2}{article}{
          author = {Grayson, Daniel R.},
          author = {Stillman, Michael E.},
          title = {Macaulay 2, a software system for research
                   in algebraic geometry},
          journal = {Available at \url{http://www.math.uiuc.edu/Macaulay2/}}
        }

\bib{jonsson}{article}{
   author={Jonsson, Jakob},
   title={Exact sequences for the homology of the matching complex},
   journal={J. Combin. Theory Ser. A},
   volume={115},
   date={2008},
   number={8},
   pages={1504--1526},
   issn={0097-3165},
   review={\MR{2455591 (2009j:05059)}},
   doi={10.1016/j.jcta.2008.03.001},
}

 \bib{jpw}{article}{
    author={J{\'o}zefiak, T.},
    author={Pragacz, P.},
    author={Weyman, J.},
    title={Resolutions of determinantal varieties and tensor complexes
    associated with symmetric and antisymmetric matrices},
    conference={
       title={Young tableaux and Schur functors in algebra and geometry
       (Toru\'n, 1980)},
    },
    book={
       series={Ast\'erisque},
       volume={87},
       publisher={Soc. Math. France},
       place={Paris},
    },
    date={1981},
    pages={109--189},
    review={\MR{646819 (83j:14044)}},
 }

\bib{kar-rei-wac}{article}{
   author={Karaguezian, Dikran B.},
   author={Reiner, Victor},
   author={Wachs, Michelle L.},
   title={Matching complexes, bounded degree graph complexes, and weight
   spaces of ${\rm GL}_n$-complexes},
   journal={J. Algebra},
   volume={239},
   date={2001},
   number={1},
   pages={77--92},
   issn={0021-8693},
   review={\MR{1827875 (2002i:05120)}},
   doi={10.1006/jabr.2000.8654},
}

 \bib{lascoux}{article}{
    author={Lascoux, Alain},
    title={Syzygies des vari\'et\'es d\'eterminantales},
    language={French},
    journal={Adv. in Math.},
    volume={30},
    date={1978},
    number={3},
    pages={202--237},
    issn={0001-8708},
    review={\MR{520233 (80j:14043)}},
    doi={10.1016/0001-8708(78)90037-3},
 }

\bib{lazarsfeld}{book}{
   author={Lazarsfeld, Robert},
   title={Positivity in algebraic geometry. I},
   series={Ergebnisse der Mathematik und ihrer Grenzgebiete. 3. Folge. A
   Series of Modern Surveys in Mathematics [Results in Mathematics and
   Related Areas. 3rd Series. A Series of Modern Surveys in Mathematics]},
   volume={48},
   note={Classical setting: line bundles and linear series},
   publisher={Springer-Verlag},
   place={Berlin},
   date={2004},
   pages={xviii+387},
   isbn={3-540-22533-1},
   review={\MR{2095471 (2005k:14001a)}},
}

\bib{macdonald}{book}{
   author={Macdonald, I. G.},
   title={Symmetric functions and Hall polynomials},
   series={Oxford Mathematical Monographs},
   edition={2},
   note={With contributions by A. Zelevinsky;
   Oxford Science Publications},
   publisher={The Clarendon Press Oxford University Press},
   place={New York},
   date={1995},
   pages={x+475},
   isbn={0-19-853489-2},
   review={\MR{1354144 (96h:05207)}},
}

\bib{newell}{article}{
   author={Newell, M. J.},
   title={A theorem on the plethysm of $S$-functions},
   journal={Quart. J. Math., Oxford Ser. (2)},
   volume={2},
   date={1951},
   pages={161--166},
   issn={0033-5606},
   review={\MR{0043762 (13,312e)}},
}

\bib{ott-pao}{article}{
   author={Ottaviani, Giorgio},
   author={Paoletti, Raffaella},
   title={Syzygies of Veronese embeddings},
   journal={Compositio Math.},
   volume={125},
   date={2001},
   number={1},
   pages={31--37},
   issn={0010-437X},
   review={\MR{1818055 (2002g:13023)}},
   doi={10.1023/A:1002662809474},
}



\bib{rei-rob}{article}{
   author={Reiner, Victor},
   author={Roberts, Joel},
   title={Minimal resolutions and the homology of matching and chessboard
   complexes},
   journal={J. Algebraic Combin.},
   volume={11},
   date={2000},
   number={2},
   pages={135--154},
   issn={0925-9899},
   review={\MR{1761911 (2002j:13015)}},
   doi={10.1023/A:1008728115910},
}


\bib{rubei}{article}{
   author={Rubei, Elena},
   title={Resolutions of Segre embeddings of projective spaces of any
   dimension},
   journal={J. Pure Appl. Algebra},
   volume={208},
   date={2007},
   number={1},
   pages={29--37},
   issn={0022-4049},
   review={\MR{2269826 (2008i:14026)}},
   doi={10.1016/j.jpaa.2005.11.010},
}

\bib{sam-snowden}{article}{
    AUTHOR = {Sam, Steven},
    AUTHOR = {Snowden, Andrew},
     TITLE = {GL--equivariant modules over polynomial rings in infinitely many variables},
      YEAR = {2012},
      journal = {arXiv},
      number = {1206.2233},
}

\bib{sha-wachs}{article}{
   author={Shareshian, John},
   author={Wachs, Michelle L.},
   title={Torsion in the matching complex and chessboard complex},
   journal={Adv. Math.},
   volume={212},
   date={2007},
   number={2},
   pages={525--570},
   issn={0001-8708},
   review={\MR{2329312 (2008d:55012)}},
   doi={10.1016/j.aim.2006.10.014},
}

\bib{snowden}{article}{
    AUTHOR = {Snowden, Andrew},
     TITLE = {Syzygies of Segre embeddings and $\Delta$--modules},
      YEAR = {2010},
      note = {To appear in Duke Math. J.},
      journal = {arXiv},
      number = {1006.5248},
}

\bib{sta}{book}{
   author={Stanley, Richard P.},
   title={Combinatorics and commutative algebra},
   series={Progress in Mathematics},
   volume={41},
   edition={2},
   publisher={Birkh\"auser Boston Inc.},
   place={Boston, MA},
   date={1996},
   pages={x+164},
   isbn={0-8176-3836-9},
   review={\MR{1453579 (98h:05001)}},
}

\bib{stu}{book}{
   author={Sturmfels, Bernd},
   title={Gr\"obner bases and convex polytopes},
   series={University Lecture Series},
   volume={8},
   publisher={American Mathematical Society},
   place={Providence, RI},
   date={1996},
   pages={xii+162},
   isbn={0-8218-0487-1},
   review={\MR{1363949 (97b:13034)}},
}

\bib{wachs}{article}{
   author={Wachs, Michelle L.},
   title={Topology of matching, chessboard, and general bounded degree graph
   complexes},
   note={Dedicated to the memory of Gian-Carlo Rota},
   journal={Algebra Universalis},
   volume={49},
   date={2003},
   number={4},
   pages={345--385},
   issn={0002-5240},
   review={\MR{2022345 (2004m:05269)}},
   doi={10.1007/s00012-003-1825-1},
}

\bib{weyman}{book}{
   author={Weyman, Jerzy},
    title={Cohomology of vector bundles and syzygies},
    series={Cambridge Tracts in Mathematics},
    volume={149},
    publisher={Cambridge University Press},
    place={Cambridge},
    date={2003},
    pages={xiv+371},
    isbn={0-521-62197-6},
    review={\MR{1988690 (2004d:13020)}},
    doi={10.1017/CBO9780511546556},
}

\bib{ziegler}{article}{
   author={Ziegler, G{\"u}nter M.},
   title={Shellability of chessboard complexes},
   journal={Israel J. Math.},
   volume={87},
   date={1994},
   number={1-3},
   pages={97--110},
   issn={0021-2172},
   review={\MR{1286818 (95e:06010)}},
   doi={10.1007/BF02772986},
}

		\end{biblist}
	\end{bibdiv}

\end{document}